\newtheorem{theorem}{Theorem}[section]
\newtheorem*{theorem*}{Theorem}
\newtheorem{corollary}[theorem]{Corollary}
\newtheorem*{corollary*}{Corollary}
\newtheorem{lemma}[theorem]{Lemma}
\newtheorem{proposition}[theorem]{Proposition}
\newtheorem{conjecture}[theorem]{Conjecture}
\newtheorem*{conjecture*}{Conjecture}
\newtheorem{thm-dfn}[theorem]{Theorem-Definition}
\newtheorem{claim}[theorem]{Claim}
\theoremstyle{definition}
\newtheorem{definition}[theorem]{Definition}
\newtheorem{remark}[theorem]{Remark}
\newtheorem{exmple}[theorem]{Example}
\newtheorem{question}[theorem]{Question}
\numberwithin{equation}{section}
\newcommand{\quash}[1]{}  %Anything in \quash is ignored
\def\presuper#1#2%
\newcommand{\is}{\simeq}
\newcommand{\tld}[1]{{\widetilde{#1}}} %tilde
\newcommand{\frakb}{{\mathfrak b}}
\newcommand{\frakg}{{\mathfrak g}}
\newcommand{\frakh}{{\mathfrak h}}
\newcommand{\frakk}{{\mathfrak k}}
\newcommand{\frakm}{{\mathfrak m}}
\newcommand{\frakn}{{\mathfrak n}}
\newcommand{\frakt}{{\mathfrak t}}
\newcommand{\bbB}{{\mathbb B}}
\newcommand{\bbC}{{\mathbb C}}
\newcommand{\bbR}{{\mathbb R}}
\newcommand{\bbZ}{{\mathbb Z}}
\newcommand{\calA}{{\mathcal A}}
\newcommand{\calB}{{\mathcal B}}
\newcommand{\calC}{{\mathcal C}}
\newcommand{\calD}{{\mathcal D}}
\newcommand{\calE}{{\mathcal E}}
\newcommand{\calF}{{\mathcal F}}
\newcommand{\calL}{{\mathcal L}}
\newcommand{\calM}{{\mathcal M}}
\newcommand{\calO}{{\mathcal O}}
\newcommand{\calP}{{\mathcal P}}
\newcommand{\calQ}{{\mathcal Q}}
\newcommand{\calT}{{\mathcal T}}
\newcommand{\calZ}{{\mathcal Z}}
\newcommand{\wt}{{\frakh^*}} %weight space
\begin{document}

\begin{titlepage}
	
	\centering
	
	{\scshape\LARGE On properties of the Casselman-Jacquet functor \par}
	
	\vspace{1cm}
	
	{\large Alexander Yom Din\par}
	
	\vspace{1cm}
	
	{School of Mathematical Sciences\par}
	{Raymond and Beverly Sackler Faculty of Exact Sciences\par}
	{Tel Aviv University\par}
	
	\vspace{2cm}
	
	\vfill
	
	Dissertation submitted to the Tel Aviv University Senate\par for the degree of Doctor of Philosophy\par
	
	\vspace{1.5cm}

	Prepared under the supervision of Prof. Joseph Bernstein

	\vfill
	
	% Bottom of the page
	{\large May 2016\par}
\end{titlepage}

\quash{\title{On properties of the Casselman-Jacquet functor}
\author{Alexander Yom Din}
\date{}
\maketitle}

\newpage
{\color{white}.}
\newpage

\chapter*{Acknowledgements}

The research was partially supported by the ERC grant 291612 and by the ISF grant 533/14.

I would like to thank my PhD advisor Joseph Bernstein for explaining his point of view on different mathematical topics and teaching various mathematical topics, for suggesting me the problem of studying the Casselman-Jacquet functor algebraically, and for being my PhD advisor.

I would like to thank Tsao-Hsien Chen, Shamgar Gurevich, Lena Gal, Adam Gal, Jiuzu Hong, Evgeny Musicantov, Eitan Sayag for various mathematical discussions.

\afterpage{\null\newpage}
\chapter*{Abstract}

In this thesis, we study the Casselman-Jacquet functor. We discuss a new technical approach which makes the Casselman-Jacquet functor right adjoint to the Bernstein functor. We give an explanation, using $D$-modules, of the Bruhat filtration appearing on the module obtained by applying the Casselman-Jacquet functor to a principal series representation. We record some conjectures.

\afterpage{\null\newpage}
\setcounter{tocdepth}{1} \tableofcontents

%%%%%%%%%%%%%%%%%%%%%%%%%%%%%%%%%%%%%

%%%%%%%%%%%%%%%%%%%%%%%%%%%%%%%%%%%%%
%%%%%%%%%%%%%%%%%%%%%%%%%%%%%%%%%%%%%
%Introduction
%%%%%%%%%%%%%%%%%%%%%%%%%%%%%%%%%%%%%
%%%%%%%%%%%%%%%%%%%%%%%%%%%%%%%%%%%%%

\pagebreak
\chapter{Introduction}

In this thesis, we study the Casselman-Jacquet functor. We propose a new technical approach to defining this functor, and study some of its properties.

For concreteness let us, in this introduction, set: $G := GL_n(\mathbb{R})$, $K$ - the orthogonal matrices, $B$ - the upper triangular matrices, $N$ (resp. $\bar{N}$) - the upper triangular (resp. lower triangular) unipotent matrices, $T$ - the diagonal matrices (also considered as $B/N$), $M := K \cap B$. By $W$ we denote the Weyl group of $G$. By Gothic letters we denote the corresponding complexified Lie algebras.

Let us note that in the thesis itself, after the introduction, we will pass to a purely algebraic setup (see section \ref{sec_setting}).

\section{The $p$-adic case}

In $p$-adic representation theory, that is, when we replace the above real group with $G = GL_n (F)$, where $F$ is a local non-archimedian field (and all the subgroups are defined as above), we are interested in the category of smooth representations of $G$. One of the main tools to study it is by using parabolic induction functors. These induction functors have left adjoints, the Jacquet functors. So, we have the Jacquet functor from the category of smooth $G$-representations to the category of smooth $T$-representations. This functor is defined by taking $N$-coinvariants, and intuitively speaking measures "how much a representation has to do with representations parabolically induced from $T$". One of the main properties of the Jacquet functor is that it is exact.

\section{The Casselman-Jacquet functor}

In our case of interest, the case of real groups and their representations, the category of interest, from an algebraic point of view, is the category $ \calM (\frakg , K)$ of Harish-Chandra $(\frakg , K)$-modules\footnote{"Harish-Chandra" is a finiteness condition; It means that the module is finitely-generated as a $U(\frakg)$-module and locally finite as a $\calZ(\frakg)$-module.}.

The first guess at defining a "Jacquet functor" would be to consider the functor $$ \calM(\frakg, K) \to \calM(\frakt , M)$$ given by $$ V \mapsto V / \frakn V.$$ However, this functor has the big drawback of being non-exact.

Casselman corrects the naive functor above, by considering a completion; He defines: $$\hat{J}(V) := \varprojlim V / \frakn^m V,$$ the $\frakn$-adic completion. To avoid completion (the above completed object is not a Harish-Chandra module in some usual sense, for example), we may consider a dual functor $J^*$; $J^*(V)$ is the submodule of $V^*$, consisting of vectors killed by high power of $\frakn$: $$ J^*(V) = \{ w\in V^* \ | \ \frakn^m w = 0 \ \text{for some} \ m\ge 1\}.$$ The very surprising feature, that contrasts with the $p$-adic situation, is that $J^*(V)$ carries naturally not merely an $(\frakt , M)$-module structure, but in fact a $(\frakg , MN)$-module structure. Casselman shows that $J^*$ is an exact contravariant functor $$ \calM (\frakg , K) \to \calM(\frakg , MN)$$ (see \cite{C2}).

The exactness of $J^*$, which is equivalent to the exactness of $\hat{J}$, is shown by a variant of the Artin-Rees lemma; See section \ref{sec_Jex} for more details.

Let us mention here that one sees in the literature a third "normalization" of the Casselman-Jacuqet functor, let us call it $J$. $J(V)$ is defined as the subspace of $\frakn$-finite vectors in $\varprojlim V / \bar{\frakn}^k V$ (see, for example, \cite{ENV}).

\section{My approach to the Casselman-Jacquet functor}

I define the Casselman-Jacquet functor slightly differently than the above, and denote it by $$ \calC: \calM(\frakg , K) \to \calM(\frakg , MN).$$ Namely, for a Harish-Chandra $(\frakg , K)$-module $V$, write $V=\oplus V_{\alpha}$ for the $K$-type decomposition, and define $\calC (V)$ as the subspace of vectors in $\prod V_{\alpha}$ which are $M$-finite and killed by high power of $\frakn$, multiplied by some "normalizing" one-dimensional vector space (see \ref{chp_CJB}).

This functor is easily related to $J^*$ (see proposition \ref{prop_relJC}).

An important feature of $\calC$, which (at least as far as I know) is not clear when one considers the previous definitions of the Casselman-Jacquet functor, is that $\calC$ admits a left adjoint, the so-called Bernstein functor $$\calB : \calM(\frakg , MN) \to \calM(\frakg , K)$$ (see chapter \ref{chp_CJB}).

I conjecture (see conjecture \ref{conj_dual2}) that $\calC$ is canonically isomorphic to $J$ (perhaps up to tensoring by a power of a one-dimensional vector space; see conjecture \ref{conj_dual2}). This conjecture is equivalent to "duality" for the Casselman-Jacquet functor  (see conjecture \ref{conj_dual}). In the case of real groups (as in this introduction), it seems that this conjecture is understood to some extent using analytic tools (see the last paragraph in Section 12 of \cite{C2}). But an algebraic understanding (formally, that would apply in the purely algebraic setting of section \ref{sec_setting}) seems to not exist yet.

Another interseting conjecture regarding $\calC$ is conjecture \ref{conj_exact}. Namely, in my approach we can consider $\calC$ from the category of all $(\frakg , K)$-modules to the category of all $(\frakg , MN)$-modules (not necessarily Harish-Chandra). The conjecture is that $\calC$ is exact on the subcategory of modules which are finitely-generated over $U(\frakg)$.

\section{D-module description}

Under the Beilinson-Bernstein correspondence, one can try to translate notions involving Harish-Chandra modules above to notions involving equivariant $D$-modules on the flag variety. The functor $J$ admits a $D$-module description, via nearby cycles (see \cite{ENV}).

In my current approach, however, I realize geometrically, instead of $\calC$, its left adjoint $\calB$, the Bernstein functor. So, we consider the "geometric Bernstein functor" $$ \bbB: \calM (D_{\lambda} , MN) \to \calM(D_{\lambda} , K)$$ (here $D_{\lambda}$ is a sheaf of twisted differential operators on the complex flag variety, and $\calM(D_{\lambda}, MN)$ is the category of $MN$-equivariant coherent $D_{\lambda}$-modules). The functor $\bbB$ admits a right adjoint $$\bbC: \calM(D_{\lambda} , K) \to \calM(D_{\lambda} , MN).$$

The conjecture that $\calC$ is isomorphic to $J$ is more or less equivalent to conjecture \ref{cnj_gjf}, that $\bbC$ is isomorphic to the geometric Jacquet functor of \cite{ENV}.

\section{The main result - $\calC$ applied to principal series}

The main result of this thesis is concerned with the structure of the $(\frakg , MN)$-module $\calC(P)$ that one obtains by applying $\calC$ to the $(\frakg , K)$-module $P$ corresponding to a principal series representation (i.e., parabolic induction from $T$).

Casselman observes that $\calC(P)$ admits a "Bruhat" filtration\footnote{a filtration whose subquotients are indexed by Bruhat cells (i.e. orbits of $B$) in the real flag variety $G/B$, which in turn bijectively correspond to elements of the Weyl group $W$.}. This he does analytically, as the so-called automatic continuity allows to replace $J^*(P)$ by a space of continuous functionals on the actual smooth principal series representation. The principal series representation consists of smooth sections of a line bundle on $G/B$, and its Casselman-Jacquet module thus consists of distributional sections of a line bundle on $G/B$, killed by high power of $\frakn$ - one can then consider the filtration on this space of distributional sections by support on closures of Bruhat cells (see \cite[section 14]{C2}).

The question we ask is how to understand this Bruhat filtration purely algebraically. I provide an answer. Translating the situation to equivariant $D$-modules (via Beilinson-Bernstein correspondence), I want to ask, what is the property of the $D$-module corresponding to $\calC(P)$, that guarantees that it has a canonical Bruhat filtration. I then notice that an $MN$-equivariant $D$-module $\calF$ always has a derived Bruhat filtration, whose $w$-th subquotient is $(i_w)_* (i_w)^! \calF$ (Here $w \in W$ is an element in the Weyl group, and $i_w : \calO_w \to X$ is the corresponding $MN$-orbit in the complex flag variety). In the case when $(i_w)^! \calF$ are all acyclic, i.e. concentrated in cohomological degree $0$, this derived filtration becomes a usual filtration in the abelian category. We call objects $\calF$ with this property (that $(i_w)^! \calF$ is acyclic for every $w \in W$) costandard-filtered. Another characterization of such objects is that they admit a filtration (in the abelian category) by costandard objects - those are objects of the form $(i_w)_* \calE$ with irreducible $\calE$.

\begin{theorem*}[Theorem \ref{thm_main}]
The object in $\calM(D_{\lambda} , MN)$ which one obtains when applying the geometric Casselman-Jacquet functor $\bbC$ to an object in $\calM(D_{\lambda},K)$ which corresponds to a principal series Harish-Chandra module via the Beilinson-Bernstein correspondence, is costandard-filtered. In particular, it admits a canonical Bruhat filtration. For $w \in W$, the $w$-th subquotient of this Bruhat filtration is a suitable costandard object.
\end{theorem*}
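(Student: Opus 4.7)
The plan is to verify costandard-filteredness via the $!$-restriction criterion. Writing $i_w : \calO_w \hookrightarrow X$ for the $MN$-orbit (Bruhat cell) indexed by $w \in W$, it suffices to show that $(i_w)^! \bbC(\calF_P)$ lies in cohomological degree zero for every $w$, where $\calF_P \in \calM(D_{\lambda}, K)$ denotes the object corresponding to $P$ under Beilinson-Bernstein. Once this acyclicity is established, the derived Bruhat filtration on $\bbC(\calF_P)$ descends to an honest filtration in the abelian category with $w$-th subquotient $(i_w)_*(i_w)^! \bbC(\calF_P)$, which is a costandard object by definition.

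First I would make $\calF_P$ explicit: since $P$ is parabolically induced from a character $\chi$ of $T$, the Beilinson-Bernstein correspondence identifies $\calF_P \is (j_K)_* \calL_\chi$, where $j_K : \Omega_K \hookrightarrow X$ is the inclusion of the open $K$-orbit and $\calL_\chi$ is the corresponding rank-one $K$-equivariant twisted local system. I would then record the acyclicity of the unadorned $!$-restrictions $(i_w)^! \calF_P$: by Matsuki-type transversality of Bruhat cells with $K$-orbits, $\calO_w \cap \Omega_K$ is open in $\calO_w$, and base change for $(j_K)_*$ along $\overline{\calO_w} \hookrightarrow X$ identifies $(i_w)^! \calF_P$ with the $*$-pushforward along the open immersion $\calO_w \cap \Omega_K \hookrightarrow \calO_w$ of a shifted rank-one local system. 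This is concentrated in degree zero, and its $(i_w)_*$-pushforward is irreducible as a twisted local system on the cell — so it is of the form required by the definition of costandard.

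The heart of the proof, and its main obstacle, is transferring the acyclicity from $\calF_P$ to $\bbC(\calF_P)$. I would seek a base-change compatibility of the form $(i_w)^! \bbC \is \bbC^{\mathrm{loc}}_w \circ (i_w)^!$, where $\bbC^{\mathrm{loc}}_w$ is a "local" orbitwise version of the geometric Casselman-Jacquet functor intrinsic to $\calO_w$ and preserving the degree-zero heart. This is nontrivial because $\bbC$ is a global construction — the right adjoint to the $K$-averaging functor $\bbB$ — while $(i_w)^!$ is local on $X$. The natural route is to use the adjunction $\bbB \dashv \bbC$ and analyze how $\bbB$ interacts with the Bruhat stratification: the $K$-averaging underlying $\bbB$ should restrict cleanly to an $MN$-orbit because the $K$-equivariance data collapses to an $MN$-equivariance along Matsuki pairs. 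Once this compatibility is in hand, applying $\bbC^{\mathrm{loc}}_w$ to the degree-zero object $(i_w)^! \calF_P$ gives the required acyclic object on $\calO_w$, and its $(i_w)_*$-pushforward realizes the $w$-th costandard subquotient of the Bruhat filtration on $\bbC(\calF_P)$.
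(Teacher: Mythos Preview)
Your strategy hinges on a base-change identity $(i_w)^! \bbC \is \bbC^{\mathrm{loc}}_w \circ (i_w)^!$ that you neither define precisely nor prove, and you yourself flag it as the main obstacle. As stated it is ill-posed: on the right-hand side $(i_w)^!$ would have to act on a $K$-equivariant object, but $\calO_w$ is not $K$-stable, so there is no $K$-equivariant $!$-pullback to $\calO_w$. If instead you mean to forget equivariance first, then your putative $\bbC^{\mathrm{loc}}_w$ must manufacture $MN$-equivariance out of a bare (or $M$-equivariant) object on $\calO_w$, and it is entirely unclear what functor this would be or why it would be $t$-exact. Your preliminary step --- acyclicity of $(i_w)^! \calF_P$ via ``Matsuki-type transversality'' --- is also not clean: the claim that $\calO_w \cap U$ is open in $\calO_w$ requires justification in the present algebraic setting, and in any case that computation concerns the costandard-filteredness of $\calF_P$ under the \emph{forgetful} functor to $\calM(D_{\lambda}, MN)$, which is not $\bbC$.

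The paper avoids all of this by never touching $\bbC$ directly. The key observation is that $\calP r^{\lambda}(E) = j_* [E]^{\lambda}$ is \emph{injective} in $\calM(D_{\lambda}, K)$ (because $j_*$ has an exact left adjoint $j^*$ and $\calM^{\lambda}(\frakt , M)$ is semisimple), so one only needs that $\bbC$ sends injectives to costandard-filtered objects. By the criterion ``$RHom(S,-)$ is acyclic for every standard $S$'' together with the adjunction $L\bbB \dashv R\bbC$, this reduces to showing that $L\bbB$ sends standard objects to acyclic objects --- a statement about the \emph{left} adjoint, which is computable. That computation (Proposition~\ref{prop_BDerTilt}) uses intertwining functors to transport an arbitrary standard $(i_w)_! [F]^{\lambda}_w$ to the closed-cell standard $(i_1)_! [F]^{w^{-1}\lambda}_1$, whose image under $\bbB^{der}$ is $j_*[F]^{w^{-1}\lambda}$ and is acyclic since the open $K$-orbit $U$ is affine. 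The identification of the $w$-th subquotient (part~(b)) is then a separate $Hom$ computation, using that the objects $\bbB((i_w)_! [F]^{\lambda}_w)$ for varying $w$ coincide in the Grothendieck group.
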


\begin{corollary*}[Corollary \ref{cor_main}]
The object in $\calM(\frakg , MN)$ that one obtains when applying the Casselman-Jacquet functor $\calC$ to a principal series Harish-Chandra module in $\calM(\frakg,K)$, admits a canonical Bruhat filtration. For $w \in W$, the $w$-th subquotient of this Bruhat filtration is a suitable twisted Verma module.
\end{corollary*}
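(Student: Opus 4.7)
The plan is to deduce Corollary \ref{cor_main} from Theorem \ref{thm_main} by transporting the geometric statement across the Beilinson-Bernstein correspondence. Fix a principal series $P \in \calM(\frakg,K)$, and let $\calP \in \calM(D_{\lambda},K)$ be a $(D_{\lambda},K)$-module on the flag variety $\flg$ with $\Gamma(\calP) \cong P$; for $\lambda$ regular dominant such a lift exists and $\Gamma$ is an exact equivalence onto its image, and the singular case can be handled by translation functors.

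First I would establish the compatibility $\calC \circ \Gamma \cong \Gamma \circ \bbC$ between the algebraic and geometric Casselman-Jacquet functors. This should follow by adjunction from the (presumably easier, and essentially built into the definition of $\bbB$ in Chapter \ref{chp_CJB}) statement that localization intertwines $\calB$ with $\bbB$; passing to right adjoints then yields the intertwining of $\calC$ with $\bbC$ by $\Gamma$.

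Granting this compatibility, I apply Theorem \ref{thm_main} to $\calP$: the object $\bbC(\calP)$ is costandard-filtered, carrying a canonical $W$-indexed filtration whose $w$-th subquotient is $(i_w)_* \calE_w$ for an irreducible $MN$-equivariant $D_{\lambda}$-module $\calE_w$ on the orbit $\calO_w$. Applying the exact functor $\Gamma$ then produces a canonical filtration on $\calC(P) \cong \Gamma(\bbC(\calP))$ whose $w$-th subquotient is $\Gamma((i_w)_* \calE_w)$. The identification of the latter with a twisted Verma module is a standard Beilinson-Bernstein calculation: the $MN$-equivariance forces $\calE_w$ to be determined by a character of $M$, and global sections of the $*$-pushforward from an $MN$-orbit on the flag variety, of such a character-like $D$-module, yields precisely a twisted Verma attached to $w \cdot \lambda$, with shift by $\rho$ and by the chosen $M$-character.

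The main obstacle I expect is not conceptual but is a careful bookkeeping of twists: tracking the $\rho$-shift, the line bundle inducing the principal series, and the $M$-character parametrizing $\calE_w$, so that each subquotient is identified on the nose as the correct twisted Verma and the filtration is genuinely canonical rather than merely defined up to isomorphism. A secondary technical point is the treatment of singular $\lambda$, where $\Gamma$ ceases to be an equivalence; here one would transport the filtration from a regular weight via translation, using compatibility of $\calC$ with translation functors.
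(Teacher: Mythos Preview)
Your approach is essentially the paper's: use Theorem \ref{thm_main}, then pass to global sections via the compatibility $\calC \circ \Gamma \cong \Gamma \circ \bbC$ (this is lemma \ref{prop_CasGamma}, obtained by adjunction from lemma \ref{lem_Bloc} exactly as you suggest). However, the paper avoids your detour through regular $\lambda$ and translation functors. It works for arbitrary $\lambda$ directly, by observing (via remark \ref{rem_funcfilt}) that $\Gamma$ sends a short exact sequence with costandard-filtered first term to a short exact sequence: the Bruhat cells $\calO_w$ are affine, so the derived global sections of any $(i_w)_* \calF$ are acyclic, and hence $\Gamma$ is exact on costandard-filtered objects regardless of whether $\lambda$ is regular or singular. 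This dissolves both of your anticipated obstacles (singular $\lambda$, and exactness of $\Gamma$) in one stroke.

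Two further simplifications you are missing: in the paper's setup the geometric lift $\calP r^{\lambda}(E) = j_* [E]^{\lambda}$ of the principal series exists \emph{by definition} for all $\lambda$ (no regularity needed for the lift), and twisted Verma modules are \emph{defined} as $V^{\lambda}_w(E) = \Gamma((i_w)_* [E]^{\lambda}_w)$, so no separate Beilinson--Bernstein calculation is required to identify the subquotients.
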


Twisted Verma modules are a version, in $|W|$-flavours, of Verma modules; They have the same composition series, but are "glued" from the irreducible constituents differently. For example, the twisted Verma module corresponding to $1 \in W$ is just the Verma module, while the twisted Verma module corresponding to $w_0 \in W$ (the longest Weyl element) is the dual Verma module. See section \ref{sec_tvm} and \cite{AL}.

A better statement would be one that identifies the subquotients more "canonically" and "functorially". See conjectures \ref{cnj_geomlem} and \ref{cnj_geomlem2}.

Let me, finally, note the similarity of the above corollary to the geometric lemma from the representation theory of $p$-adic groups (see \cite[Ch. III, 1.2]{B}).

%%%%%%%%%%%%%%%%%%%%%%%%%%%%%%%%%%%%%
%%%%%%%%%%%%%%%%%%%%%%%%%%%%%%%%%%%%%
%Setting and Notations
%%%%%%%%%%%%%%%%%%%%%%%%%%%%%%%%%%%%%
%%%%%%%%%%%%%%%%%%%%%%%%%%%%%%%%%%%%%

\pagebreak
\chapter{Setting and Notations}

In this chapter we fix the setting and notations for the thesis.

As opposed to the introduction, we will work in a completely algebraic setup (see example \ref{exm_real} for the connection to the setting of the introduction). We fix a field $k$, algebraically closed of characteristic $0$. All vector spaces, tensor products, algebraic varieties etc. will be over $k$. All algebraic varieties will be assumed quasi-projective, so that to be on the safe side with things like cohomological dimension of categories of $D$-modules.

\section{Setting}\label{sec_setting}

We fix the following data:

\begin{itemize}
	\item $G$ - a connected reductive algebraic group.
	\item $\theta: G \to G$ - an involution of $G$.
	\item $K$ - an open subgroup of $G^{\theta} = \{ g \in G \ | \ \theta(g) = g\}$.
	\item $B$ - a $\theta$-split Borel subgroup of $G$. This means that $\theta(B)$ is opposite to $B$. We assume furthermore that $\theta(t) = t^{-1}$ for all $t\in B \cap \theta(B)$.
\end{itemize}

\begin{remark}
	In the language of real groups, the condition that $B$ is $\theta$-split means that we are in the quasi-split case, while the further condition that $\theta(t) = t^{-1}$ for all $t\in B \cap \theta(B)$ means that we are in the split case.
\end{remark}

\begin{exmple}\label{exm_real}
	One can take $G=GL_n$, $\theta (g) = (g^t)^{-1}$, $K=G^{\theta}$ - orthogonal matrices, $B$ - upper triangular matrices.
	If $k = \bbC$, then we can think of $G$ as a real algebraic group. $K(\bbR)$ is then a maximal compact subgroup in $G(\bbR)$, and $B$ is a Borel subgroup defined over $\bbR$. We thus obtain the example used in the introduction (just that now, we denote by $K$ and etc. the complex algebraic groups, not the real points as in the introduction).
\end{exmple}

\section{Notations}

\subsection{General notations}

Denote by $N$ the unipotent radical of $B$, and $T := B \cap \theta(B)$ ($T$ is thus a $\theta$-split torus - meaning that $\theta(t)=t^{-1}$ for all $t\in T$).

Denote $M := K \cap B$. $M$ is a finite subgroup of $T$.

By Gothic letters, we denote the Lie algebras of the corresponding algebraic groups: $\frakg = Lie(G)$ etc.

The Weyl group of $G$ will be denoted by $W$. We identify $W \is N_G(T) / T$.

"The Cartan" Lie algebra of $\frakg$ will be denoted by $\frakh$. It is defined by choosing a Borel subgroup $B_0 \subset G$, and defining $\frakh$ as the Lie algebra of $B_0 / R_u (B_0)$. When one chooses a different Borel subgroup $B_1 \subset G$, an element $g \in G$ satisfying $g B_0 g^{-1} = B_1$ identifies between $B_0 / R_u (B_0)$ and $B_1 / R_u (B_1)$, and this identification does not depend on the choice of $g$. In this way, we get a well-defined abelian Lie algebra $\frakh$ - "the Cartan".

We can identify $\frakh \is \frakt$ using our Borel $B$, when it is convenient. $W$ acts linearly on $\frakh$ - when identifying $W \is N_G(T)/T$ and $\frakh \is \frakt$, this is just the adjoint action. Making an affine translation of this action so that $-\rho$ is the fixed point, instead of $0$, we obtain the so-called "dot-action".

By $\calZ(\frakg)$ we denote the center of $U(\frakg)$. characters of $\calZ(\frakg)$ are called infinitesimal characters. We identify as usual $W \backslash \backslash \frakh^{*} \is Specm(\calZ(\frakg))$, where for the quotient we consider the dot-action. For a weight $\lambda \in \wt$, we denote by $[\lambda]$ the corresponding infinitesimal character.

\quash{and for $\theta \in Specm(\calZ(\frakg))$ denote by $[\theta] \subset \frakh^*$ the corresponding $W$-orbit.}

\subsection{Categories of modules}

For a closed subgroup $H \subset G$, we denote by $Mod(\frakg , H)$ the abelian category of (strong)\footnote{In this thesis, we always consider strongly equivariant modules - i.e. the two possible actions of the Lie algebra of the acting group should coincide.} $(\frakg , H)$-modules.

By $\calM(\frakg , H) \subset Mod(\frakg , H)$ we denote the full subcategory of Harish-Chandra modules, i.e. modules which are finitely generated as $U(\frakg)$-modules, and locally finite as $\calZ(\frakg)$-modules - these two properties together imply that $\calZ(\frakg)$ acts via a quotient of finite dimension.

Given an infinitesimal character $\theta$, we denote by $Mod^{\theta}(\frakg , H) \subset Mod(\frakg , H)$ and $\calM^{\theta}(\frakg , H) \subset \calM(\frakg , H)$ the full subcategories of objects on which $\calZ(\frakg)$ acts by $\theta$.

We denote by $\calM(\frakt , M)$ the category of Harish-Chandra $(\frakt , M)$-modules - those are just the finite-dimensional (strong) $(\frakt , M)$-modules. For $\lambda \in \wt$, we denote by $\calM^{\lambda}(\frakt , M) \subset \calM(\frakt , M)$ the full subcategory of modules on which $\frakt$ acts by $\lambda - \rho$. Under the forgetful functor, $\calM^{\lambda}(\frakt , M)$ is equivalent to the category of finite-dimensional algebraic $M$-modules.

\subsection{The flag variety}

The flag variety of $\frakg$ will be denoted by $X$. For $x \in X$, we denote by $\frakb_x$ the corresponding Borel subalgebra, and $\frakn_x := [\frakb_x , \frakb_x]$. We denote by $x_0 \in X$ the point corresponding to our Borel subgroup $B$.

The $MN$-orbits on $X$ are the same as the $N$-orbits on $X$, and are in bijection with $W$. Namely, to $w \in W$ corresponds the $MN$-orbit $\calO_w$ passing via $wx_0$. Here, by $wx_0$ we mean $\widetilde{w} x_0$, where $\widetilde{w} \in N_G (T)$ is any representative of $w$ - the end result does not depend on this choice of representative. We denote by $i_w : \calO_w \to X$ the embedding.

There is a finite number of $K$-orbits on $X$, and they are affine. There is a unique open $K$-orbit $U$ - that which passes through $x_0$. We denote by $j : U \to X$ the embedding.

For $\lambda \in \wt$, we have the TDO $D_{\lambda}$ on $X$. We use the shifted convention, so that $D_{\rho} \is D_X$, the sheaf of differential operators.

For integral weight $\mu \in \wt$, we have the $G$-equivariant line bundle $\calL_{\mu}$ on $X$.

\subsection{Categories of $D_{\lambda}$-modules}

For a closed subgroup $H \subset G$, we denote by $Mod(D_{\lambda} , H)$ the abelian category of (strongly)\footnote{In this thesis, we always consider strongly equivariant modules - i.e. the two possible actions of the Lie algebra of the acting group should coincide.} $H$-equivariant $D_{\lambda}$-modules. By $\calM(D_{\lambda} , H) \subset Mod(D_{\lambda} , H)$ we denote the full subcategory of those $D_{\lambda}$-modules which are coherent. If $H$ acts on $X$ with finitely many orbits, then coherent $H$-equivariant $D_{\lambda}$-modules are the same as holonomic $H$-equivariant $D_{\lambda}$-modules.

For $w \in W$, the category $\calM (i_w^{\cdot} D_{\lambda}, MN)$ of $MN$-equivariant $D_{\lambda}$-modules on the orbit $\calO_w$ is equivalent, via taking the fiber at the point $w x_0$, to $\calM^{\lambda}(\frakt , M)$. In the same fashion, the category $\calM(j^{\cdot} D_{\lambda} , K)$ is equivalent, via taking the fiber at the point $x_0$, to $\calM^{\lambda}(\frakt , M)$. We denote these equivalences by $$ [\cdot]^{\lambda}_w : \calM^{\lambda}(\frakt , M) \approx \calM (i_w^{\cdot} D_{\lambda}, MN) : Fib_{w x_0}$$ and $$ [\cdot]^{\lambda} : \calM^{\lambda}(\frakt , M) \approx \calM (j^{\cdot} D_{\lambda}, K) : Fib_{x_0}.$$

We have the localization-globalization adjunction $$ \Lambda : \calM^{[\lambda]} (\frakg , H) \rightleftarrows \calM (D_{\lambda} , H): \Gamma,$$ which is an equivalence if $\lambda$ is regular antidominant.

\subsection{$D$-algebras}

See appendix \ref{app_dalg} for the notions we use regarding $D$-algebras. Our reference for $D$-algebras, $TDO$'s, etc. is \cite{BB}.

For a $D$-algebra $\calA$ on a smooth algebraic variety $Y$, we denote by $Mod(\calA)$ the abelian category of $\calA$-modules. If an affine algebraic group $H$ acts on $Y$ and $\calA$ is $H$-equivariant, we denote by $Mod(\calA , H)$ the abelian category of (strongly)\footnote{In this thesis, we always consider strongly equivariant modules - i.e. the two possible actions of the Lie algebra of the acting group should coincide.} $H$-equivariant $\calA$-modules.

Of course, the previous notations $Mod(\frakg , H)$ and $Mod(D_{\lambda} , H)$ are special cases of this $Mod(\calA , H)$ - the first one when $Y = pt$ and $\calA = U(\frakg)$, and the second one when $Y = X$ and $\calA = D_{\lambda}$.

\subsection{$t$-Categories}

A $t$-category is a triangulated category $\calT$ equipped with a $t$-structure $(\calT^{\leq 0} , \calT^{ \ge 0})$. Let us recall that $\calT^{\leq 0} \cap \calT^{\ge 0}$ is an Abelian category, called the heart of $\calT$. The $t$-category $\calT$ is said to be bounded if for every object $A \in \calT$ there exist $i , j \in \bbZ$ such that $A \in \calT^{\leq i} \cap \calT^{\ge j}$. An object $A \in \calT$ is said to be acyclic if $A \in \calT^{\leq 0} \cap \calT^{\ge 0}$, i.e. $A$ lies in the heart of the $t$-structure.

\subsection{Some other notations}

Given a $K$-equivariant algebra $A$ (such as $U(\frakg)$), we denote by $\presuper{k}{a}$ the result of the action of the element $k \in K$ on the element $a \in A$.

Given a $U(\frakn)$-module $V$, we say that a vector $v \in V$ is $\frakn$-torsion, if $\frakn^m v = 0$ for some $m \ge 1$.

For a smooth algebraic variety $Y$, we denote by $\calO_Y$ the sheaf of regular functions on $Y$, and $O(Y) := \Gamma ( \calO_Y )$. Also, we denote by $D_Y$ the sheaf of differential operators on $Y$, and $D(Y) := \Gamma ( D_Y )$.

%%%%%%%%%%%%%%%%%%%%%%%%%%%%%%%%%%%%%
%%%%%%%%%%%%%%%%%%%%%%%%%%%%%%%%%%%%%
%Summary
%%%%%%%%%%%%%%%%%%%%%%%%%%%%%%%%%%%%%
%%%%%%%%%%%%%%%%%%%%%%%%%%%%%%%%%%%%%

\pagebreak
\chapter{Summary}

Let us summarize the constructions and propositions of this thesis.

\begin{enumerate}[(1)]

\item We define a pair of adjoint functors $$ \calB : \calM (\frakg , MN) \rightleftarrows \calM (\frakg , K) : \calC .$$

The functor $\calC$ is directly related to Casselman's functor $J^*$ from the introduction - we call it the Casselman-Jacquet functor. The functor $\calB$ is, more or less, Bernstein's functor (a "projective" analog of Zuckerman's functor) - and so we call it the Bernstein functor (see chapter \ref{chp_CJB}).

\item For a weight $\lambda \in \wt$, we define a pair of adjoint functors $$ \bbB : \calM (D_{\lambda} , MN) \rightleftarrows \calM (D_{\lambda} , K) : \bbC .$$ Those are the "geometric" Bernstein and Casselman-Jacquet functors (see chapter \ref{chp_CJB} for $\bbB$ and chapter \ref{chp_bbC} for $\bbC$).

\item $\calB$ and $\bbB$ are related by the localization functor $\Lambda$, while $\calC$ and $\bbC$ are related by the globalization functor $\Gamma$; i.e. the following diagrams naturally $2$-commute:

$$\xymatrix{\calM(D_{\lambda}, MN) \ar[r]^{\bbB} & \calM(D_{\lambda}, K) \\ \calM^{[\lambda]}(\frakg , MN) \ar[r]_{\calB} \ar[u]^{\Lambda} & \calM^{[\lambda]}(\frakg , K) \ar[u]_{\Lambda}} \quad \quad \xymatrix{\calM(D_{\lambda}, MN) \ar[d]_{\Gamma} & \calM(D_{\lambda}, K) \ar[l]_{\bbC} \ar[d]^{\Gamma} \\ \calM^{[\lambda]}(\frakg , MN) & \calM^{[\lambda]}(\frakg , K) \ar[l]^{\calC}}$$ (see lemma \ref{lem_Bloc} and lemma \ref{prop_CasGamma}).

\item The functors $\calC$ and $\bbC$ are exact\footnote{Of course, a priori, $\bbB$ and $\calB$ are right exact as left adjoint functors and $\calC$ and $\bbC$ are left exact, as right adjoint functors.} (see theorem \ref{thm_Cex} and claim \ref{clm_bbC}).

\item The category $\calM (D_{\lambda} , MN)$ admits a formalism of (co)standard objects, (co)standard-filtered objects, etc. Also, $\calD (D_{\lambda} , MN)$, the derived category of $MN$-equivariant holonomic $D_{\lambda}$-modules (as in \cite{BL}), is naturally equivalent to $D^b(\calM(D_{\lambda},MN))$ (see appendix \ref{app_str} and section \ref{sec_MN}).

\item The left derived functor $$ L\bbB : D^b (\calM (D_{\lambda} , MN)) \to D^b(\calM (D_{\lambda} , K))$$ sends standard-filtered objects to acyclic objects (i.e. objects concentrated in cohomological degree $0$). As a consequence, the functor $$ \calM (D_{\lambda} , MN) \leftarrow \calM (D_{\lambda} , K) : \bbC$$ sends injective objects to costandard-filtered objects (see propositions \ref{prop_LBtilt} and \ref{prop_Ctilt}).

\item The main theorem: For a $K$-equivariant irreducible smooth $D_{\lambda}$-module $\calE$ on the open $K$-orbit $U$, the object $j_* \calE \in \calM (D_{\lambda}, K)$ is injective.  Hence, $\bbC (j_* (\calE))$ is costandard-filtered. As a result, $\bbC (j_* (\calE))$ admits a canonical $W$-filtration - a filtration whose subquotients are indexed by the Weyl group $W$. The subquotients are costandard objects (see theorem \ref{thm_main}).

\item Corollary of the main theorem: By passing to global sections in the previous paragraph, we obtain that for a principal series module $P \in \calM^{[\lambda]} (\frakg , K)$, the module $\calC ( P)$ admits a canonical $W$-filtration, whose subquotients are twisted Verma modules (see corollary \ref{cor_main}).

\end{enumerate}

Moreover, in chapter \ref{chp_conj}, we make some speculations:

\begin{enumerate}[$\langle$1$\rangle$]

\item In our setup, the functor $\calC$ is actually defined on categories of "big"\footnote{i.e., dropping the conditions of $U(\frakg)$-finite generation and $\calZ(\frakg)$-local finiteness.} modules $$ Mod (\frakg , MN) \leftarrow Mod (\frakg , K) : \calC.$$ We expect that $\calC$ preserves finite-generation over $U(\frakg)$, and conjecture that $\calC$ is exact on the subcategory of modules finitely-generated over $U(\frakg)$.

\item We conjecture that $\calC$ commutes, in a certain sense, with duality. In the case of real groups, this corresponds to Casselman's canonical pairing, and so presumably understood to some degree by analytical means.

\item We conjecture that $\calC$ is more or less isomorphic to the functor $J$, which is usually understood as the Jacquet-Casselman functor (as in \cite{ENV}, for example). This conjecture is equivalent to the previous one.

\item To establish property (6) above, we use the derived geometric Bernstein functor $$ \bbB^{der} : \calD (D_{\lambda} , MN) \to \calD (D_{\lambda} , K);$$ Here, the categories $\calD (D_{\lambda} , \cdot)$ are the derived categories of equivariant $D_{\lambda}$-modules, as in \cite{BL}. One might stress that this derived functor is the "correct one" (rather than $L \bbB$). We conjecture that $\bbB^{der}$ admits a $t$-exact right adjoint $\bbC^{der}$ and, furthermore, that $\bbC^{der}$ is more or less the geometric Jacquet functor of \cite{ENV}.

\item We expect a more precise formulation of the main theorem, where the subquotients are not only determined up to a non-canonical isomorphism, but rather identified canonically and functorially (perhaps after some small choice). In this formulation, the main theorem becomes very similar to the geometric lemma from the representation theory of $p$-adic groups (see \cite[Ch. III, 1.2]{B}).

\end{enumerate}
%%%%%%%%%%%%%%%%%%%%%%%%%%%%%%%%%%%%%
%%%%%%%%%%%%%%%%%%%%%%%%%%%%%%%%%%%%%
%Averaging and relaxing functors
%%%%%%%%%%%%%%%%%%%%%%%%%%%%%%%%%%%%%
%%%%%%%%%%%%%%%%%%%%%%%%%%%%%%%%%%%%%

\pagebreak
\chapter{Averaging and relaxing functors}\label{sec_func}

In this chapter we discuss the "averaging" and "relaxing" functors, which will serve us when we define the Casselman-Jacquet and Bernstein functors. The idea is that we want a unified framework for defining the Bernstein functor on modules, as well as the Bernstein functor on $D$-modules.

For the functors between categories of modules over $D$-algebras that we use, one should consult appendix \ref{app_dalg}.

We fix the following. $Y$ is a smooth algebraic variety, equipped with an action of an affine algebraic group $K$. $\calA$ is a $K$-equivariant $D$-algebra on $Y$, and $M \subset K$ is a closed subgroup. We also assume that $K/ M$ is affine and $M$ is reductive.

\section{Motivation}

We would like to define a functor $Mod (\calA , M) \to Mod (\calA , K)$. For the motivation, naively replace equivariant $\calA$-modules by invariant functions and pretend that $Y$ is finite. If we have an $M$-invariant function $f$ on $Y$, we can construct a $K$-invariant function $Av(f)$ on $Y$ by the formula $$Av(f) (y)  = \sum_{k\in K/M} f(ky).$$ We would like to describe a diagrammatic interpretation.

For a map $\phi : Z \to Y$ we have the operation $\phi^{\bullet}$ which sends a function $f$ on $Y$ to the function on $Z$ defined by $\phi^{\bullet}(f)(z)=f(\phi(z))$. We also have the operation $\phi_{\bullet}$ which sends a function $f$ on $Z$ to the function on $Y$ defined by $\phi_{\bullet}(f)(y)=\sum_{\phi(z)=y} f(z)$. Finally, if $\phi$ is the quotient map by a free action of a group $M$, we have the operation $\phi_+$ which sends an $M$-invariant function $f$ on $Z$ to the function on $Y$ defined by $\phi_+ (f)(y)=f(\bar{y})$, where $\bar{y}$ is any preimage of $y$ under $\phi$.

We consider the diagram  $$ \xymatrix{ K \times Y \ar[r]^{\widetilde{a}} \ar[dd]_{p} & K \times Y \ar[d]^{q} & \\ & K /M \times Y \ar[d]^{\widetilde{p}} \\ Y & Y } $$

Here the map $p$ is the projection, $\widetilde{a} (k,y) := (k,ky)$, the map $q$ is the quotient map on the first factor and the identity on the second factor, and $\widetilde{p}$ is the projection. We then have $$Av(f) = \widetilde{p}_{\bullet} q_{+} \widetilde{a}_{\bullet} p^{\bullet} (f).$$ Notice that $\widetilde{a}$ is an isomorphism, so that $\widetilde{a}_{\bullet}$ is just "a change of coordinates".

\section{The functors $Av^M_K$ and $av^M_K$}

To define our functors, we consider again the diagram from the previous section:

$$ \xymatrix{ K \times Y \ar[r]^{\widetilde{a}} \ar[dd]_{p} & K \times Y \ar[d]^{q} & \\ & K /M \times Y \ar[d]^{\widetilde{p}} \\ Y & Y } $$

where $p$ is the projection, $\widetilde{a} (k,y) := (k,ky)$, the map $q$ is the quotient map on the first factor and the identity on the second factor, and $\widetilde{p}$ is the projection.

We fix the following equivariant structures on the spaces in the diagram: on the left $Y$ we consider the $M$-action. On the left $K \times Y$, we consider the $(K \times M)$-action, where $K$ acts by $k^{\prime}(k,y) := (k^{\prime}k,y)$ and $M$ acts by $m(k,y):=(km^{-1},my)$. On the right $Y$ we consider the $K$-action. On $K/M \times Y$ we consider the $K$-action given by $k^{\prime} (\overline{k},y):=(\overline{k^{\prime}k},k^{\prime}y)$. On the right $K\times Y$ we consider the $(K \times M)$-action, where $K$ acts by $k^{\prime}(k,y) := (k^{\prime}k,k^{\prime}y)$ and $M$ acts by $m(k,y):=(km^{-1},y)$. Notice that the $(K\times M)$-actions on the two copies of $K \times Y$ are interchanged via $\widetilde{a}$.

We also fix the following $D$-algebras on the spaces in the diagram: On both copies of $Y$, we consider the $D$-algebra $\calA$. On the rest of the spaces, we consider the $D$-algebras gotten from $\calA$ by pullback along the vertical maps. Of course, since all our maps are projection maps, the resulting $D$-algebras on the two copies of $K \times Y$ are $D_K \boxtimes \calA$, and the $D$-algebra on $K/M \times Y$ is $D_{K/M} \boxtimes \calA$. We note that all those $D$-algebras are suitably equivariant and also, importantly, that $\widetilde{a}$ canonically interchanges the $D$-algebras on both copies of $K \times Y$ - this is equivalent to the data of $K$-equivariancy of $\calA$.

Consider now the diagram of functors: $$ \xymatrix{ Mod(p^{\cdot}\calA , K \times M) \ar[r]^{\widetilde{a}_+} & Mod (p^{\cdot} \calA , K \times M) \ar[d]^{q_+} & \\ & Mod (\widetilde{p}^{\cdot} \calA, K)  \ar@/_/[d]_{\widetilde{p}_*}  \ar@/^/[d]^{\widetilde{p}_{\flat}} \\ Mod(\calA,M) \ar[uu]^{p^{\circ}} & Mod(\calA,K) } $$

Let us remark that here $p^{\circ}$ and $q_+$ are induction equivalences, and $\widetilde{a}_+$ is an equivalence of "transport of structure" ($\widetilde{a}$ is an isomorphism). The functor $\widetilde{p}_*$ kills the action of vector fields on $K/M$ by way of coinvariants, while $\widetilde{p}_{\flat}$ kills the action of vector fields on $K/M$ by way of invariants.

\begin{definition}\

\begin{enumerate}[(1)]
\item Denote by $$Av^M_K: Mod(\calA , M) \to Mod(\calA, K)$$ the functor $\widetilde{p}_* \circ q_+ \circ \widetilde{a}_+ \circ p^{\circ}$.

\item Denote by $$ Mod(\calA , K) \leftarrow Mod(\calA, M) : av^M_K$$ the functor $\widetilde{p}_{\flat} \circ q_+ \circ \widetilde{a}_+ \circ p^{\circ}$.
\end{enumerate}

\end{definition}

\begin{remark}
One might call $av^M_K$ the "Zuckerman functor", and $Av^M_K$ the "Bernstein functor". But we just call them "averaging functors", and reserve the name "Bernstein functor" for the functor $\calB$ to come later.
\end{remark}

\section{The functors $Rel^K_M$ and $rel^K_M$}\label{sec_rel}

Our functors $Av^M_K$ and $av^M_K$ admit adjoints as follows:

\begin{definition}\
\begin{enumerate}[(1)]
\item Denote by $$Rel^K_M : Mod(\calA , K) \to Mod(\calA , M) $$ the functor $p_+ \circ \widetilde{a}^{\circ} \circ q^{\circ} \circ \widetilde{p}^{\circ}$, which is left adjoint to $av^M_K$.
\item In case $Y=pt$, denote by $$Mod(\calA , M) \leftarrow Mod(\calA , K) : rel^K_M$$ the functor $p_+ \circ \widetilde{a}^{\circ} \circ q^{\circ} \circ \widetilde{p}^{\flat}$, which is right adjoint to $Av^M_K$.
\end{enumerate}
\end{definition}

\begin{lemma}
The functor $Rel^K_M$ is naturally isomorphic to the forgetful functor.
\end{lemma}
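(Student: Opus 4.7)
My plan is to reduce the composite defining $Rel^K_M$ to a pullback along the action map, and then exploit the strong $K$-equivariance of $V$ to identify that pullback with the projection-pullback, which is inverse to $p_+$.

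First I would observe the geometric identity $\widetilde{p}\circ q\circ\widetilde{a} = a$, where $a:K\times Y\to Y$ is the action map $(k,y)\mapsto ky$. Indeed, $\widetilde{a}(k,y)=(k,ky)$, then $q$ sends it to $(\overline{k},ky)$, and $\widetilde{p}$ projects to $ky$. Since pullback is contravariant with respect to composition, this yields a natural isomorphism $\widetilde{a}^{\circ}\, q^{\circ}\, \widetilde{p}^{\circ}\cong a^{\circ}$, so the definition simplifies to
$$Rel^K_M(V) \;\cong\; p_+\, a^{\circ}(V),$$
where the left $K\times Y$ carries the $(K\times M)$-equivariance fixed in the previous section.

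Next I would invoke the fact that strong $K$-equivariance of $V\in Mod(\calA,K)$ is precisely the datum of a canonical isomorphism $\alpha_V:a^{\circ}V\cong p^{\circ}V$ in $Mod(p^{\cdot}\calA,K)$, satisfying the cocycle condition on $K\times K\times Y$ and reducing to the identity along $\{e\}\times Y$. The point to verify is that $\alpha_V$ respects the $M$-equivariance structures imposed in the diagram: on $a^{\circ}V$ the $M$-action $m\cdot(k,y)=(km^{-1},my)$ is transported from the right $Y$ by the $M$-invariance of $a$ combined with the restricted $M$-action on $V$, while on $p^{\circ}V$ it comes from inducing the restricted $M$-structure along $p$. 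The cocycle identity for $\alpha_V$ forces these two $M$-structures to agree under $\alpha_V$, so $\alpha_V$ promotes to an isomorphism in $Mod(p^{\cdot}\calA,K\times M)$.

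Finally, since $p^{\circ}$ is an induction equivalence with quasi-inverse $p_+$, one concludes
$$Rel^K_M(V) \;\cong\; p_+\, a^{\circ}V \;\cong\; p_+\, p^{\circ}V \;\cong\; V|_M,$$
where $V|_M$ denotes $V$ regarded as an object of $Mod(\calA,M)$ via restriction along $M\subset K$---precisely the forgetful functor. Naturality in $V$ is automatic since each step is natural in $V$. The anticipated main obstacle is not conceptual but bookkeeping: carefully checking that the strong $K$-equivariance isomorphism $\alpha_V$ intertwines the specific "twisted" $(K\times M)$-actions fixed in the diagram (rather than some more naive diagonal or factorwise action); once this compatibility is in place, the result is formal.
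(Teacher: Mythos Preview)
Your argument is correct. The route differs slightly from the paper's: the paper replaces $p_+$ by $s^{\circ}\circ for$, where $s:Y\to K\times Y$ is the unit section $s(y)=(1,y)$ and $for$ forgets from $K\times M$ to $M$ via the diagonal, so that $Rel^K_M$ becomes a composite of pullbacks along $\widetilde{p}\circ q\circ\widetilde{a}\circ s = \mathrm{id}_Y$, and the equivariance bookkeeping reduces to a simple forget from $K$ to $M$. You instead compose only the three inner pullbacks to get $a^{\circ}$ and then invoke the strong equivariance datum $\alpha_V:a^{\circ}V\cong p^{\circ}V$ before applying $p_+$. The two arguments are equivalent---your use of the full equivariance isomorphism is the ``integrated'' version of the paper's evaluation at the unit section---but the paper's organization sidesteps the $(K\times M)$-compatibility check you flag as the main obstacle: once $p_+$ is written as $s^{\circ}\circ for$, there is nothing to verify beyond composing maps of varieties.
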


\begin{proof}

We can describe the inversre equivalence to $$p^{\circ} : Mod(\calA , M) \to Mod(p^{\cdot} \calA , K \times M),$$ instead of as $p_+$, as the composition $$ Mod(p^{\cdot} \calA , K \times M) \xrightarrow{for} Mod(p^{\cdot} \calA , M) \xrightarrow{s^{\circ}} Mod(\calA , M).$$ Here, $for$ is the forgetful functor from the action of $K \times M$ to the action of $M$ via the diagonal embedding $M \to K \times M$, and $s : Y \to K \times Y$ is the section map $s(y) = (1,y)$.

Thus, $Rel^K_M$ is expressed as a composition of pullbacks $$ s^{\circ} \circ for \circ \widetilde{a}^{\circ} \circ q^{\circ} \circ \widetilde{p}^{\circ},$$ which is clearly the desired forgetful functor.

\end{proof}

\subsection{Formulas when $Y=pt$}\label{sec_frml}

Let us describe our four functors explicitly, in the case when $Y=pt$. The proofs of these formulas are just careful chases through the diagrams and definitions.

\begin{enumerate}[(1)]

\item \textbf{Description of $av^M_K$}: For $V \in Mod (\calA, M)$, we denote by $Fun_M(K,V)$ the space of algebraic functions $\phi :K \to V$ satisfying $$\phi (km)=m^{-1}\phi(k) \quad \text{for } k \in K, m \in M.$$ On it, we consider the $\calA$-action given by $$(a\phi)(k)=\presuper{k^{-1}}{a} \cdot \phi(k) \quad \text{for } a \in \calA, k \in K$$ and the $K$-action given by $$(h\phi)(k)=\phi(h^{-1}k) \quad \text{for } h,k \in K.$$ Now, the two $\frakk$-actions, one coming from differentiating the $K$-action and one coming from the $\calA$-action - do not in general agree; Their difference defines a $\frakk$-action.

Taking invariants w.r.t. this $\frakk$-action yields an $(\calA , K)$-module, which is $av^M_K (V)$: $$ av^M_K(V) \is Fun_M (K,V)^{\frakk}.$$

\item \textbf{Description of $Av^M_K$}: We assume that $K$ is reductive. The description is similar to that of $av^M_K$; we just need to take coinvariants w.r.t. the above $\frakk$-action, instead of invariants, and add a "normalizing" factor: $$ Av^M_K (V) \is \wedge^{\text{top}}(\frakk / \frakm)^* \otimes Fun_M (K, V)_{\frakk}.$$ Here, $\wedge^{\text{top}}(\frakk / \frakm)^*$ is the top exterior power of $(\frakk / \frakm)^*$ - a one-dimensional vector space (arising from the passage from left to right $D(K/M)$-modules). Since $K$ and $M$ are reductive this factor does not come into play in the actions.

\item \textbf{Description of $Rel^K_M$}: The functor $Rel^K_M$ is, as we explained above, the forgetful functor.

\item \textbf{Description of $rel^K_M$}: Again we assume that $K$ is reductive. Given $V \in Mod(\calA , K)$, the module $rel^K_M (V)$ is given by $$rel^K_M (V) \is \wedge^{\text{top}}(\frakk / \frakm) \otimes Hom_K ( O(K) , V)^{M-\text{fin.}}. $$ Here again the factor $\wedge^{\text{top}}(\frakk / \frakm)$ is "inert", i.e. does not play a role in the description of the actions. The notation $(\cdot)^{M-\text{fin.}}$ stands for taking $M$-finite vectors. The $K$-action on $O(K)$ w.r.t. which we take the $Hom$ is the left regular action. Let us specify the $(\calA , M)$-action on $Hom_K ( O(K) , V)$. The $M$-action is the one induced by the right regular $M$-action on $O(K)$. To specify the $\calA$-action we prefer to describe $Hom_K ( O(K) , V)$ in a different way first. Namely, by Peter-Weyl there is a canonical identification $$Hom_K ( O(K) , V) \is \prod_{\alpha} V_{\alpha}$$ where $\alpha$ runs over isomorphism classes if irreducible algebraic $K$-modules, and $V_{\alpha}$ denotes the corresponding isotypic component  of $V$ (the so-called "$K$-type"). In this presentation, the action of $\calA$ is just "type-wise", i.e. the action of every $a \in \calA$ is such that the diagram $$ \xymatrix{ \prod V_{\alpha} \ar[r]^{a} & \prod V_{\alpha} \\ \oplus V_{\alpha} \ar[r]^{a} \ar[u] & \oplus V_{\alpha} \ar[u]}$$ commutes. In other words,  the action on the product extends the original action on the sum "by continuity" (thinking of the product as a completion of the sum). Notice that, in order for this to be well defined, it should hold that given $\alpha$ and $a \in \calA$, there is a finite number of $\beta$ such that $a \cdot V_{\beta} \cap V_{\alpha} \neq 0$. This is easy to deduce from $\calA$ being an algebraic $K$-module, by considering highest weights of the $K$-modules.

Thus we can summarize that $$rel^K_M (V) \is \wedge^{\text{top}}(\frakk / \frakm) \otimes \left( \prod_{\alpha} V_{\alpha} \right)^{M-\text{fin.}}. $$ where the action of $(\calA , M)$ is "type-wise".

\end{enumerate}

\begin{lemma}\label{lem_nilpind}
	Suppose that $N \subset K$ is a connected unipotent subgroup satisfying $K = M \ltimes N$. Then $av^M_K (V)$, via the counit morphism $Rel^K_M av^M_K (V) \to V$, is isomorphic to the subspace of $V$, consisting of $\frakn$-torsion vectors.
\end{lemma}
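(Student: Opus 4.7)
The plan is to work with the explicit formula from Section~\ref{sec_frml}. That formula gives $av^M_K(V) \is Fun_M(K,V)^{\frakk}$, where invariance means the $K$-infinitesimal action $(X\phi)(k)=\frac{d}{dt}\big|_{t=0}\phi(\exp(-tX)k)$ and the $\calA$-action $(X\phi)(k)=\presuper{k^{-1}}{X}\cdot\phi(k)$ agree for every $X\in\frakk$. The counit $\epsilon:Rel^K_M av^M_K(V)\to V$ is then the evaluation $\phi\mapsto\phi(1)$; this is $(\calA,M)$-linear thanks to $\presuper{1}{a}=a$ and $\phi(m^{-1})=m\cdot\phi(1)$. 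The task is to show that $\epsilon$ is injective with image the $\frakn$-torsion subspace of $V$.

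For the image calculation I use that $K=M\ltimes N$ makes $K\is N\times M$ as varieties; via $\phi\mapsto\phi|_N$ this identifies $Fun_M(K,V)\is O(N)\otimes V$. The $K$-action of $X\in\frakn$ operates on the $K$-variable only, i.e.\ on the $O(N)$-factor by an invariant vector field tensored with $\mathrm{id}_V$. Since $N$ is connected unipotent, $O(N)$ is a polynomial ring on which such vector fields act locally nilpotently, so $\frakn$ acts locally nilpotently on $O(N)\otimes V$. For $\phi\in av^M_K(V)$ the $\calA$-action of $\frakn$ coincides with the $K$-action, hence some $\frakn^j\phi=0$; evaluating at $1$, which intertwines the $\calA$-action, gives $\frakn^j\phi(1)=0$, so $\phi(1)$ is $\frakn$-torsion.

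To conclude I construct an explicit inverse to $\epsilon$. The subspace $V^{\frakn\text{-tors}}$ is $M$-stable (as $M$ normalizes $\frakn$) and $\frakn$-stable with $\frakn$ acting locally nilpotently, so it inherits an integrated algebraic action of $N$ (via exponentials on each finite-dimensional $U(\frakn)v$) and hence of $K=M\ltimes N$. For $v\in V^{\frakn\text{-tors}}$ set $\phi_v(k):=k^{-1}v$; this takes values in $M\cdot U(\frakn)v\subset V$ (finite-dimensional by local finiteness of the $M$-action on $V$) and depends algebraically on $k$, satisfies $\phi_v(km)=m^{-1}\phi_v(k)$, and satisfies the matching condition because both sides evaluate to $k^{-1}Xv$ via the identity $\presuper{k^{-1}}{X}\cdot k^{-1}=k^{-1}X$ in $U(\frakg)$. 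Since $\phi_v(1)=v$, the map $v\mapsto\phi_v$ is an $(\calA,M)$-linear section of $\epsilon$, finishing the proof.

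The main obstacle is sorting out which of the two $\frakk$-actions on $Fun_M(K,V)$ is locally nilpotent — it is the $K$-translation one, by polynomiality of $O(N)$ — and transferring that local nilpotence to the $\calA$-action by way of the invariance condition defining $av^M_K(V)$. A secondary subtlety is verifying algebraicity of the candidate inverse $\phi_v(k)=k^{-1}v$, which rests on the $\frakn$-torsion of $v$ and on the local finiteness of the algebraic $M$-action on $V$.
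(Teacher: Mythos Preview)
Your explicit approach via the formula $av^M_K(V)\is Fun_M(K,V)^{\frakk}$ is sound and is a concrete unpacking of the paper's one-line proof, which simply invokes the equivalence between algebraic $N$-modules and $\frakn$-torsion modules. Both arguments ultimately rest on that equivalence: you use it to justify local nilpotence of the translation action on $O(N)$ and to integrate the $\frakn$-action on $V^{\frakn\text{-tors}}$ to an $N$-action. The paper's route is more categorical (the equivalence makes $Rel^K_M$ fully faithful, so the counit is automatically a monomorphism onto the coreflective subobject), while yours computes everything by hand.

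There is, however, a genuine gap. You announce that you must show $\epsilon$ is \emph{injective} with image $V^{\frakn\text{-tors}}$, but you only construct a one-sided section: $\epsilon(\phi_v)=v$ shows surjectivity onto $V^{\frakn\text{-tors}}$, not injectivity. To close the argument you must check the other composite, i.e.\ that $\phi_{\phi(1)}=\phi$ for every $\phi\in av^M_K(V)$, equivalently that $\phi(n)=n^{-1}\cdot\phi(1)$ for $n\in N$. This is not automatic from what you have written. The clean fix is to observe that $ev_1$ is $\frakn$-linear (being $\calA$-linear) between two algebraic $N$-modules---$av^M_K(V)$ via the $K$-action and $V^{\frakn\text{-tors}}$ via your integrated action---and hence $N$-equivariant by one more application of the same equivalence; then $\phi(n)=ev_1(n^{-1}\phi)=n^{-1}\cdot ev_1(\phi)=n^{-1}\cdot\phi(1)$. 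This is precisely the step that the paper's categorical phrasing absorbs for free. A minor related point: your justification ``$O(N)$ is a polynomial ring on which such vector fields act locally nilpotently'' is not quite the reason (e.g.\ $x\partial_x$ is not locally nilpotent on $k[x]$); the correct reason is again the equivalence---any algebraic $N$-module is $\frakn$-torsion when $N$ is unipotent.
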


\begin{proof}
	This follows easily from the fact that the functor from algebraic $N$-modules to torsion $\frakn$-modules, given by differentiating the action, is an equivalence.
	
	Let us remark that the $K$-module structure on $av^M_K (V)$ in this description, as the subspace of $\frakn$-torsion in $V$, is the unique one which when restricted to $M$ gives us the original $M$-module structure, and when restricted to $N$ differentiates to the $\frakn$-module structure.
\end{proof}

\section{Some commutation properties}

Denote $A := \Gamma (\calA)$, the algebra of global sections. We consider $A$ as a $D$-algebra on the point $pt$. We have the localization-globalization adjunction $$ \Lambda : Mod(A , K) \rightleftarrows Mod (\calA , K) : \Gamma.$$ The functor $\Gamma$ is given by taking global sections, while $\Lambda$ is given by tensoring $\calA\otimes_A \cdot$.

\begin{lemma}\label{lem_loc}\
	\begin{enumerate}[(1)]
		\item The functor $Av^M_K$ commutes with localization, i.e. the following diagram naturally $2$-commutes: $$ \xymatrix{Mod(\calA , M) \ar[r]^{Av^M_K} & Mod(\calA , K) \\ Mod(A , M) \ar[r]_{Av^M_K} \ar[u]^{\Lambda} & Mod(A , K) \ar[u]_{\Lambda}} $$
		\item The functor $av^M_K$ commutes with globalization, i.e. the following diagram naturally $2$-commutes: $$ \xymatrix{Mod(\calA , K) \ar[d]_{\Gamma} & Mod(\calA , M) \ar[d]^{\Gamma} \ar[l]_{av^M_K} \\ Mod(A , K) & Mod(A , M) \ar[l]^{av^M_K}} $$
	\end{enumerate}
\end{lemma}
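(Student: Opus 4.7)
My plan is to reduce both (1) and (2) to elementary factor-by-factor compatibilities in the compositions $Av^M_K = \widetilde{p}_* \circ q_+ \circ \widetilde{a}_+ \circ p^{\circ}$ and $av^M_K = \widetilde{p}_{\flat} \circ q_+ \circ \widetilde{a}_+ \circ p^{\circ}$. Three of the four factors are transparent: $\widetilde{a}_+$ is the equivalence induced by the isomorphism $\widetilde{a}$; $q_+$ is the induction equivalence for the free $M$-action on the $K$-factor; and $p^{\circ}$ is essentially external tensor with $D_K$ along the projection $p$. All three commute automatically with $\Lambda$ and with $\Gamma$, because they either rearrange equivariance data or act purely on the $K$-factor, while $\Lambda$ and $\Gamma$ touch only the $Y$-factor. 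The substantive content therefore lies in the compatibility of $\widetilde{p}: K/M \times Y \to Y$ with localization and globalization.

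For part (2), the cleanest route is via adjunctions. By the preceding lemma, $Rel^K_M$ is the forgetful functor, and we have $Rel^K_M \dashv av^M_K$ together with $\Lambda \dashv \Gamma$. Hence $\Gamma \circ av^M_K$ and $av^M_K \circ \Gamma$, viewed as functors $Mod(\calA,M) \to Mod(A,K)$, are both right adjoints: respectively of $Rel^K_M \circ \Lambda$ and $\Lambda \circ Rel^K_M$, both going $Mod(A,K) \to Mod(\calA,M)$. These two left adjoints are canonically isomorphic because $\Lambda = \calA \otimes_A (-)$ does not see the difference between $K$- and $M$-equivariance on the underlying module, while $Rel^K_M$ simply restricts that equivariance. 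Uniqueness of right adjoints then yields the desired natural isomorphism.

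For part (1), a right adjoint of $Av^M_K$ is not constructed in the text for general $Y$, but it can be produced from the same four factors: $\widetilde{a}_+$ and $q_+$ are equivalences, $p^{\circ}$ is the equivalence $Mod(\calA, M) \simeq Mod(p^{\cdot}\calA, K\times M)$, and $\widetilde{p}_* \dashv \widetilde{p}^{\flat}$ in the $D$-module formalism. Writing $R$ for this right adjoint of $Av^M_K$, the adjoint argument from (2) applies: $\Lambda \circ Av^M_K$ and $Av^M_K \circ \Lambda$ are left adjoints to $R \circ \Gamma$ and $\Gamma \circ R$, and these right adjoints agree by the same compatibility reasoning, now reduced to the commutation of $\widetilde{p}^{\flat}$ with $\Gamma$, which holds because $\widetilde{p}$ is a projection from a product and the two operations act on independent factors. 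Alternatively, one can prove (1) directly by the factor-by-factor diagram chase, checking the compatibility of $\widetilde{p}_*$ with $\Lambda$ as a base change for the product $\widetilde{p}$.

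The main potential obstacle is bookkeeping: keeping the equivariance structures straight through all four stages of each composition, and handling the one-dimensional twist $\wedge^{\text{top}}(\frakk/\frakm)^*$ that arises in the left-to-right $D$-module conversion implicit in $\widetilde{p}_*$. This twist is inert with respect to $\Lambda$ and $\Gamma$ (it does not involve the algebra $\calA$), so the verification is essentially formal; nonetheless, care is needed to confirm that the factor-wise natural isomorphisms are compatible with the units and counits of the intermediate equivalences $\widetilde{a}_+$ and $q_+$, so that they assemble into a natural isomorphism of the full compositions.
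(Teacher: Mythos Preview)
The paper's own proof is a single sentence: ``The verification is immediate, by chasing the diagrams.'' So the benchmark here is the factor-by-factor check that each of $p^{\circ}$, $\widetilde{a}_+$, $q_+$, $\widetilde{p}_*$ (resp.\ $\widetilde{p}_\flat$) commutes with $\Lambda$ (resp.\ $\Gamma$), which is straightforward because the first three are equivalences acting only on the $K$-factor and the last is an operation on the $K/M$-factor while $\Lambda$, $\Gamma$ act on the $Y$-factor.

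Your adjunction argument for (2) is a clean alternative and is correct: since $Rel^K_M$ is the forgetful functor, $\Lambda \circ Rel^K_M \cong Rel^K_M \circ \Lambda$ is obvious, and passing to right adjoints gives $\Gamma \circ av^M_K \cong av^M_K \circ \Gamma$. This is arguably more conceptual than the diagram chase, though of course the paper's route is no longer than yours.

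Your primary argument for (1), however, has a gap. You invoke a right adjoint $R$ to $Av^M_K$ built from $\widetilde{p}_* \dashv \widetilde{p}^{\flat}$, but in this paper the functor $p^{\flat}$ is defined \emph{only when $Y=pt$} (see the appendix on $D$-algebras, item (4) in the section on $p_*, p_\flat, p^\flat$, and likewise the definition of $rel^K_M$ in \S\ref{sec_rel}). For general $Y$ and a general $K$-equivariant $D$-algebra $\calA$, the formula $\calF \mapsto {}^{L}Hom(O(K/M),\calF)$ need not land in quasi-coherent $\calA$-modules, and you have not supplied a construction that does. So the adjoint route for (1) is not available as written. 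Your fallback---the direct factor-by-factor chase reducing to the commutation of $\widetilde{p}_*$ with $\Lambda$ as a product base change---is exactly the paper's approach and is the one you should keep.
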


\begin{proof}
	The verification is immediate, by chasing the diagrams.
\end{proof}

Another commutation is of $Av^M_K$ with twist.

\begin{lemma}\label{lem_AvTw}
	Let $\calL$ be a $K$-equivariant line bundle on $Y$. The following diagram naturally $2$-commutes: $$ \xymatrix{Mod(\calA , M) \ar[r]^{Av^M_K} \ar[d]_{\calL \otimes_{\calO} \cdot} & Mod(\calA , K) \ar[d]^{\calL \otimes_{\calO} \cdot} \\ Mod(\calA_{\calL} , M) \ar[r]_{Av^M_K} & Mod(\calA_{\calL} , K) } $$
\end{lemma}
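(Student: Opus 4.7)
The plan is to decompose $Av^M_K$ as in the definition, $Av^M_K = \widetilde{p}_* \circ q_+ \circ \widetilde{a}_+ \circ p^{\circ}$, and verify that each of the four constituent functors commutes with twisting by the appropriate pullback of $\calL$. The strategy rests on two standard principles: twisting is natural with respect to pullback, i.e. $f^{\circ}(\calL' \otimes_{\calO} F) \simeq f^* \calL' \otimes_{\calO} f^{\circ} F$ for an $\calA$-module pullback $f^{\circ}$, and twisting interacts with pushforward via the projection formula, i.e. $f_+(F \otimes_{\calO} f^* \calL') \simeq f_+ F \otimes_{\calO} \calL'$.

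First I would fix the ``travelling'' system of line bundles on the diagram: $\calL$ on both copies of $Y$, the pullback $p^* \calL$ on the left $K \times Y$, its transport $\widetilde{a}^{*}(p^* \calL)$ (canonically identified with $p^* \calL$ via the isomorphism $\widetilde{a}$, since $\widetilde{a}$ fixes the first coordinate and the $K$-equivariant structure on $\calL$ provides the needed intertwiner on the second), and $\widetilde{p}^* \calL$ on $K/M \times Y$, with the compatibility $q^*(\widetilde{p}^* \calL) \simeq p^* \calL$ on the right $K \times Y$. The $K$-equivariance of $\calL$ endows each of these pullbacks with a natural equivariant structure for the relevant group action in the diagram. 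Then, step by step: the pullback equivalence $p^{\circ}$ intertwines $\calL \otimes_{\calO} \cdot$ with $p^* \calL \otimes_{\calO} \cdot$ tautologically; the ``transport of structure'' equivalence $\widetilde{a}_+$ intertwines the two line-bundle twists on the copies of $K \times Y$ via the canonical identification above; and the descent equivalence $q_+$ intertwines twisting by $p^* \calL$ on the right $K \times Y$ with twisting by $\widetilde{p}^* \calL$ on $K/M \times Y$ (this is descent of a line bundle along a free quotient, and is formal).

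The final step, $\widetilde{p}_*$, is the substantive one and is exactly the projection formula for the smooth morphism $\widetilde{p}: K/M \times Y \to Y$, applied to $\calO$-modules underlying our equivariant $D$-algebra modules: $\widetilde{p}_*(F \otimes_{\calO} \widetilde{p}^* \calL) \simeq \widetilde{p}_* F \otimes_{\calO} \calL$. Concatenating these four natural isomorphisms gives the asserted $2$-commutation. The main obstacle, or rather the most delicate piece of bookkeeping, is verifying that the equivariant structures on the intermediate line bundles match the equivariant structures carried by the various $p^{\cdot} \calA$-, $\widetilde{p}^{\cdot} \calA$-, and $q^{\cdot} \widetilde{p}^{\cdot} \calA$-modules in the diagram, so that every one of the four natural isomorphisms above is in fact an isomorphism of equivariant modules and not merely of underlying $\calO$-modules. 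This verification is parallel in spirit to the compatibility checks already present in the definition of $Av^M_K$, and is again a matter of chasing the diagram.
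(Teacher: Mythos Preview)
Your proposal is correct and is precisely the diagram chase the paper has in mind: the paper's proof is simply ``The verification is immediate, by chasing the diagrams,'' and what you have written is that chase spelled out in full. Your identification of the projection formula as the content of the $\widetilde{p}_*$ step and the routine compatibility of twisting with $p^{\circ}$, $\widetilde{a}_+$, and $q_+$ is exactly what is meant.
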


\begin{proof}
	The verification is immediate, by chasing the diagrams.
\end{proof}

%%%%%%%%%%%%%%%%%%%%%%%%%%%%%%%%%%%%%
%%%%%%%%%%%%%%%%%%%%%%%%%%%%%%%%%%%%%
%The Casselman-Jacquet Functor
%%%%%%%%%%%%%%%%%%%%%%%%%%%%%%%%%%%%%
%%%%%%%%%%%%%%%%%%%%%%%%%%%%%%%%%%%%%

\pagebreak
\chapter{The Casselman-Jacquet and Bernstein functors}\label{chp_CJB}

In this chapter we define the Casselman-Jacquet functor $\calC$, the Bernstein functor $\calB$ and the geometric Bernstein functor $\bbB$. We show their finiteness properties, and compare the functor $\calC$ to the "traditional" Jacquet-Casselman functor $J^*$. We also recall how to see that $J^*$ is exact.

We work under the notations and assumptions of section \ref{sec_setting}.

\section{Definition}

We define the Casselman-Jacquet and Bernstein functors using the formalism of chapter \ref{sec_func}. For this, we set $Y = pt$ and $\calA = U(\frakg)$.

\begin{definition}
The \textbf{Bernstein functor} $$ \calB : Mod(\frakg , MN) \to Mod(\frakg, K) $$ is defined as $$ \calB := Av^M_K \circ Rel^{MN}_M.$$ The \textbf{Casselman-Jacquet functor} $$  Mod(\frakg, MN) \leftarrow Mod(\frakg , K) : \calC$$ is defined as $$ \calC := av^M_{MN} \circ rel^K_M.$$
\end{definition}

We also define the geometric Bernstein functor using the formalism of chapter \ref{sec_func}. For this, we set $Y$ to be the flag variety and $\calA = D_{\lambda}$.

\begin{definition}
	The \textbf{geometric Bernstein functor} $$ \bbB : Mod(D_{\lambda} , MN) \to Mod(D_{\lambda} , K)$$ is defined as $$ \bbB := Av^M_K \circ Rel^{MN}_M.$$
\end{definition}

The formulas for $\calB$ and $\bbB$ are only superficially the same, because the space $Y$ is absent from the notation.

The relation between $\calB$ and $\calC$ is as follows.
\begin{proposition}
There is a natural adjunction $$ \calB : Mod(\frakg, MN) \rightleftarrows Mod(\frakg , K) : \calC.$$
\end{proposition}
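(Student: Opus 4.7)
The plan is to obtain the adjunction simply by composing the two adjunctions already established for the averaging/relaxing functors in chapter \ref{sec_func}. Namely, from the definition of $Rel^K_M$ and $rel^K_M$ in section \ref{sec_rel}, we have at our disposal the adjunctions
$$Rel^{MN}_M : Mod(\frakg , MN) \rightleftarrows Mod(\frakg , M) : av^M_{MN}$$
and
$$Av^M_K : Mod(\frakg , M) \rightleftarrows Mod(\frakg , K) : rel^K_M,$$
the latter in its $Y=pt$ incarnation (which is precisely where $rel^K_M$ is defined). Before invoking these, I would check that the standing hypotheses from chapter \ref{sec_func} (namely $M$ reductive and $K/M$ affine) are satisfied in both instances: $M$ is a finite subgroup of $T$ and hence reductive; $K/M$ is affine because $M$ is finite and $K$ is affine; and $MN/M \cong N$ is affine because $N$ is unipotent.

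Given these two adjunctions, the composite left adjoint is
$$\calB = Av^M_K \circ Rel^{MN}_M,$$
and the adjoint pairs compose (in the reverse order on the right-adjoint side) to give the right adjoint
$$\calC = av^M_{MN} \circ rel^K_M,$$
which is exactly our definition. Concretely, for $V \in Mod(\frakg, MN)$ and $W \in Mod(\frakg, K)$, one produces a natural isomorphism
$$Hom_{(\frakg,K)}(\calB V, W) \cong Hom_{(\frakg, M)}(Rel^{MN}_M V, rel^K_M W) \cong Hom_{(\frakg, MN)}(V, \calC W),$$
by first applying the $Av^M_K \dashv rel^K_M$ adjunction and then the $Rel^{MN}_M \dashv av^M_{MN}$ adjunction, and these isomorphisms are natural in both arguments.

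There is essentially no obstacle here beyond bookkeeping of equivariance conditions; the content has already been packaged into the formalism of chapter \ref{sec_func}. The only care needed is to track that the intermediate category $Mod(\frakg, M)$ is the correct common target/source for the two compositions, so that the adjunctions can indeed be stacked. This is automatic from the definitions of $Rel^{MN}_M$, $av^M_{MN}$, $Av^M_K$, and $rel^K_M$, all of which involve $M$ as the intermediate equivariance level.
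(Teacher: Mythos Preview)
Your proof is correct and takes essentially the same approach as the paper, which simply says ``This is clear from the adjunctions of section \ref{sec_rel}.'' You have just spelled out the composition of adjunctions explicitly and verified the standing hypotheses, which is a reasonable elaboration of the one-line argument.
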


\begin{proof}
This is clear from the adjunctions of section \ref{sec_rel}.
\end{proof}

The relation between $\calB$ and $\bbB$ is as follows.

\begin{lemma}\label{lem_Bloc}
	The following diagram $2$-commutes:
	$$\xymatrix{Mod(D_{\lambda}, MN) \ar[r]^{\bbB} & Mod(D_{\lambda}, K) \\ Mod^{[\lambda]}(\frakg , MN) \ar[r]_{\calB} \ar[u]^{\Lambda} & Mod^{[\lambda]}(\frakg , K) \ar[u]_{\Lambda}}$$
\end{lemma}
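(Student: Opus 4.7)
The plan is to reduce the statement to an application of Lemma \ref{lem_loc}(1) together with the fact that $Rel^{MN}_M$ is the forgetful functor. By the definition given in this chapter, both $\calB$ and $\bbB$ are written as a composition $Av^M_K \circ Rel^{MN}_M$, where in the former case the averaging/relaxing functors are taken in the situation $Y = pt$, $\calA = U(\frakg)$, and in the latter case in the situation $Y = X$, $\calA = D_\lambda$. The target diagram is then obtained by pasting together two smaller $2$-commutativity squares, one for each functor in the composition.

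First I would address the commutation of $\Lambda$ with $Av^M_K$. Applied to $\calA = D_\lambda$, Lemma \ref{lem_loc}(1) gives a natural isomorphism $\Lambda \circ Av^M_K \cong Av^M_K \circ \Lambda$ between functors $Mod(\Gamma(D_\lambda), M) \to Mod(D_\lambda, K)$. To translate this into a statement about $Mod^{[\lambda]}(\frakg, M)$, I would use that the $U(\frakg)$-action on any module with infinitesimal character $[\lambda]$ factors through the canonical map $U(\frakg) \to \Gamma(D_\lambda)$, so that $\Lambda: Mod^{[\lambda]}(\frakg, H) \to Mod(D_\lambda, H)$ is identified with the localization functor for $\calA = D_\lambda$ of the general setup, for any closed subgroup $H \subset G$.

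Next I would handle $Rel^{MN}_M$. By the lemma in Section \ref{sec_rel}, in both the $U(\frakg)$ setting and the $D_\lambda$ setting, $Rel^{MN}_M$ is naturally isomorphic to the functor that forgets from $MN$-equivariance to $M$-equivariance. Since forgetting equivariance is an operation purely on the group-action side and $\Lambda$ is given by the tensor product $D_\lambda \otimes_{\Gamma(D_\lambda)} \cdot$ on the algebra side, the two commute strictly; unravelling the definitions in the relevant diagrams from Chapter \ref{sec_func} makes this a direct chase. One then composes the two squares
$$\xymatrix{Mod(D_\lambda, MN) \ar[r]^{Rel^{MN}_M} & Mod(D_\lambda, M) \ar[r]^{Av^M_K} & Mod(D_\lambda, K) \\ Mod^{[\lambda]}(\frakg, MN) \ar[r]_{Rel^{MN}_M} \ar[u]^{\Lambda} & Mod^{[\lambda]}(\frakg, M) \ar[r]_{Av^M_K} \ar[u]^{\Lambda} & Mod^{[\lambda]}(\frakg, K) \ar[u]_{\Lambda}}$$
to obtain the desired $2$-commutativity.

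I expect the main (mild) obstacle to be the bookkeeping step of identifying $\Lambda$ on $Mod^{[\lambda]}(\frakg, H)$ with the abstract localization functor for the $D$-algebra $D_\lambda$ in the sense of Lemma \ref{lem_loc}. Once this identification is installed, the rest of the argument is a formal consequence of the two already-established commutations (averaging with localization, and forgetful with localization), and no further computation is required.
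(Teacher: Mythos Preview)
Your proposal is correct and follows exactly the paper's approach: invoke Lemma~\ref{lem_loc}(1) for the commutation of $Av^M_K$ with $\Lambda$, and observe that $Rel^{MN}_M$, being the forgetful functor, commutes with localization trivially. The paper's proof is a one-line version of your argument, and your added remark about identifying $\Lambda$ on $Mod^{[\lambda]}(\frakg,H)$ with the abstract localization for $\calA = D_\lambda$ makes explicit a step the paper leaves implicit.
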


\begin{proof}
	This is a particular case of the more general lemma \ref{lem_loc} (notice that $Rel^{MN}_M$, being the forgetful functor, clearly commutes with localization).
\end{proof}

Let us state another property of $\bbB$:

\begin{lemma}\label{lem_Btrans}
Let $\calL_{\mu}$ be a $G$-equivariant line bundle on $X$. The following diagram $2$-commutes:
$$ \xymatrix{ Mod (D_{\lambda} , MN) \ar[r]^{\bbB} \ar[d]_{\calL_{\mu} \otimes_{\calO} \cdot} & Mod (D_{\lambda} , K) \ar[d]^{\calL_{\mu} \otimes_{\calO} \cdot} \\ Mod (D_{\lambda + \mu} , MN) \ar[r]_{\bbB} & Mod (D_{\lambda + \mu} , K)}$$
\end{lemma}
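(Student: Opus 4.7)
The plan is to reduce the statement to the two lemmas already established in the previous chapter, namely Lemma \ref{lem_AvTw} (which says $Av^M_K$ commutes with tensoring by a $K$-equivariant line bundle) and the fact (proved just after the definition of $Rel^K_M$) that $Rel^{MN}_M$ is isomorphic to the forgetful functor. By definition, $\bbB = Av^M_K \circ Rel^{MN}_M$, so the lemma will follow from showing that each of these factors commutes with the twist $\calL_\mu \otimes_{\calO} (\cdot)$.

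First I would handle $Rel^{MN}_M$: since $\calL_\mu$ is $G$-equivariant, it is a fortiori $MN$-equivariant and $M$-equivariant, and the $D_{\lambda+\mu}$-module structure on $\calL_\mu \otimes_{\calO} \calF$ is the same whether one remembers the $MN$-action on $\calF$ or only the $M$-action. Thus the square
\[
\xymatrix{
Mod(D_\lambda, MN) \ar[r]^{Rel^{MN}_M} \ar[d]_{\calL_\mu \otimes_{\calO} \cdot} & Mod(D_\lambda, M) \ar[d]^{\calL_\mu \otimes_{\calO} \cdot} \\
Mod(D_{\lambda+\mu}, MN) \ar[r]_{Rel^{MN}_M} & Mod(D_{\lambda+\mu}, M)
}
\]
commutes canonically. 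Next, because $\calL_\mu$ is in particular $K$-equivariant, Lemma \ref{lem_AvTw} applied with $\calA = D_\lambda$ and then $\calA = D_{\lambda+\mu}$ (noting that $(D_\lambda)_{\calL_\mu} \cong D_{\lambda+\mu}$) gives the $2$-commutativity of the analogous square for $Av^M_K$ between $(D_\lambda, M)$ and $(D_\lambda, K)$.

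I would then paste these two $2$-commutative squares horizontally to obtain the desired $2$-commutative square for $\bbB = Av^M_K \circ Rel^{MN}_M$. Since both of the constituent natural isomorphisms come from straightforward tensor-product manipulations, concatenating them yields the required natural isomorphism $\bbB(\calL_\mu \otimes_{\calO} \calF) \cong \calL_\mu \otimes_{\calO} \bbB(\calF)$.

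There is no real obstacle here; the content of the proof is bookkeeping about equivariant structures and the observation that $Rel^{MN}_M$ is forgetful. The only point worth checking with care is the compatibility of the TDO identification $(D_\lambda)_{\calL_\mu} \cong D_{\lambda+\mu}$ with the $G$-equivariant (hence $K$- and $MN$-equivariant) structure on $\calL_\mu$, so that the invocation of Lemma \ref{lem_AvTw} is legitimate; this, however, is a standard property of $G$-equivariant TDOs on the flag variety and requires no new input.
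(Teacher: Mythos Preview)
Your proof is correct and follows exactly the same approach as the paper: you reduce to Lemma~\ref{lem_AvTw} for $Av^M_K$ and observe that $Rel^{MN}_M$, being the forgetful functor, commutes trivially with the twist. The paper's proof is just a one-line version of what you wrote.
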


\begin{proof}
	This is a particular case of the more general lemma \ref{lem_AvTw} (notice that $Rel^{MN}_M$, being the forgetful functor, clearly commutes with twist).
\end{proof}

Finally, let us spell out concretely what are the functors $\calB$ and $\calC$.

Given a $(\frakg , MN)$-module $V$, the $(\frakg , K)$-module $\calB(V)$ is given as $$ \calB (V) = \wedge^{\text{top}}(\frakk / \frakm)^* \otimes Fun_M (K, V)_{\frakk}.$$ Here , $ \wedge^{\text{top}}(\frakk / \frakm)^*$ is a one-dimensional vector space not interfering with the formulas for the action. $Fun_M (K,V)$ is the space of algebraic functions $\phi : K \to V$, satisfying $\phi (km) = m^{-1} \phi (k)$ for $m \in M$ and $k \in K$. The $K$-action is via the left regular action on $K$, and the $\frakg$-action is $(\xi f) (k) = \presuper{k^{-1}}{\xi} \cdot f(k)$ for $\xi \in \frakg , \ k \in K$ and $f: K \to V$. The $\frakk$-action w.r.t. which we take the coinvariants is the difference between the $\frakk$-action gotten by differentiating the $K$-action, and the $\frakk$-action gotten by restricting the $\frakg$-action.

Given  a $(\frakg , K)$-module $V$, the $(\frakg , MN)$-module $\calC(V)$ is given as $$ \calC (V) = \wedge^{\text{top}}(\frakk / \frakm) \otimes \left( \prod_{\alpha} V_{\alpha} \right)^{M-\text{fin.}, \ \frakn-\text{tor.}}. $$ Here again $ \wedge^{\text{top}}(\frakk / \frakm)$ is a one-dimensional vector space not interfering with the formulas for the action. The product is the product of $K$-types of $V$, and it carries a $(\frakg , M)$-action "type-wise". We then take the submodule of $M$-finite and $\frakn$-torsion vectors.

\section{Finiteness properties}

\begin{lemma}\label{lem_Bcoh}
	The functor $\bbB$ sends $\calM(D_{\lambda},MN)$ into $\calM(D_{\lambda}, K)$.
\end{lemma}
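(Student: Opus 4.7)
The plan is to unwind the definition $\bbB = Av^M_K \circ Rel^{MN}_M$ and check that each constituent step preserves coherence. By the lemma of Section \ref{sec_rel}, $Rel^{MN}_M$ is simply the forgetful functor from $MN$-equivariance to $M$-equivariance, so starting from $\calF \in \calM(D_{\lambda}, MN)$ we leave the underlying coherent $D_{\lambda}$-module untouched. The task therefore reduces to showing that $Av^M_K$ sends coherent $M$-equivariant $D_{\lambda}$-modules on $X$ to coherent $K$-equivariant $D_{\lambda}$-modules on $X$.

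I next unpack $Av^M_K = \widetilde{p}_* \circ q_+ \circ \widetilde{a}_+ \circ p^\circ$ using the diagram from Chapter \ref{sec_func}. The functor $p^\circ$ is the induction equivalence $Mod(D_{\lambda}, M) \simeq Mod(p^{\cdot} D_{\lambda}, K \times M)$ associated to the smooth projection $p$; $\widetilde{a}_+$ is transport of structure along the isomorphism $\widetilde{a}$; and $q_+$ is the descent equivalence for the free $M$-action on the first factor of $K \times X$, landing in $Mod(D_{K/M} \boxtimes D_{\lambda}, K)$. All three are equivalences of abelian categories and visibly preserve coherence. The whole proof thus reduces to showing that $\widetilde{p}_*$ takes coherent strongly $K$-equivariant $\widetilde{p}^{\cdot} D_{\lambda}$-modules on $K/M \times X$ to coherent $K$-equivariant $D_{\lambda}$-modules on $X$.

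For this key step I would use two observations. First, $\widetilde{p}_*$ is defined in Chapter \ref{sec_func} by taking coinvariants along the vertical vector fields on $K/M$; on a strongly $K$-equivariant module the $K$-infinitesimal action modulo $\frakm$ precisely supplies such vertical vector fields, so this is a genuinely small quotient rather than a naive direct image. Second, since $K$ acts on $X$ with finitely many orbits, all affine, the category $\calM(D_{\lambda}, K)$ coincides with the category of holonomic $K$-equivariant $D_{\lambda}$-modules. Combining the two, it suffices to establish a singular-support estimate showing that the output of $\widetilde{p}_*$ is holonomic, and this one can read off by descending our input module to the coherent $M$-equivariant module $\calF$ we started with on $X$ (which is trivially holonomic, being coherent $K$-finite-orbit data on $X$) and tracking the support through the bundle $K \times^M X \to X$.

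The main obstacle is precisely the pushforward step: a naive direct image along a non-proper affine projection does not in general preserve coherence, so the argument must genuinely exploit the coinvariants interpretation of $\widetilde{p}_*$, the strong $K$-equivariance of the intermediate module on $K/M \times X$, and the finite-orbit / affine-orbit structure of the $K$-action on $X$. Everything else is a routine diagram chase through the definitions already set up in Chapter \ref{sec_func}.
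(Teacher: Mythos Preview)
Your approach is essentially the paper's: reduce coherence to holonomicity via the finite-orbit hypothesis, then observe that all the constituent operations preserve holonomicity. The paper's proof is exactly this, stated in two sentences.

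Two remarks, though. First, your justification that the input module $\calF$ is ``trivially holonomic, being coherent $K$-finite-orbit data on $X$'' is misattributed: after applying $Rel^{MN}_M$ the module is only $M$-equivariant, and $M$ is finite, so $M$-equivariance gives you nothing. The correct reason is that $\calF$ came from $\calM(D_{\lambda}, MN)$ and $MN$ has finitely many orbits on $X$; holonomicity of the underlying $D_{\lambda}$-module survives the forgetful functor. Second, you are working too hard at the pushforward step. You propose to exploit the coinvariants description of $\widetilde{p}_*$ and run a bespoke singular-support estimate through the bundle $K \times^M X \to X$, but none of that is needed: $\widetilde{p}_*$ is (the relevant cohomology of) the $D$-module direct image along the projection $K/M \times X \to X$, and preservation of holonomicity under direct image is a standard theorem in $D$-module theory. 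Once you invoke that, the proof is over --- which is exactly what the paper does.
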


\begin{proof}
	Recall that, since $MN$ (resp. $K$) has finitely many orbits on $X$, coherent $MN$-equivariant (resp. $K$-equivariant) $D_{\lambda}$-modules are the same as holonomic $MN$-equivariant (resp. $K$-equivariant) $D_{\lambda}$-modules. The lemma thus follows from the fact that holonomicity is preserved under all the performed operations in the definition of $\bbB$.
\end{proof}

\begin{claim}
	The functor $\calB$ preserves infinitesimal character\footnote{this means that if a module is killed by some ideal in $\calZ(\frakg)$, then $\calB$ applied to this module is killed by the same ideal as well.} and sends $\calM(\frakg , MN)$ into $\calM(\frakg , K)$.
\end{claim}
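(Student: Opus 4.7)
The proof plan has two pieces: infinitesimal character preservation, and finite generation over $U(\frakg)$ (local finiteness over $\calZ(\frakg)$ follows from the first).

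For infinitesimal character preservation, I would work directly from the explicit formula $\calB V = \wedge^{\text{top}}(\frakk/\frakm)^* \otimes Fun_M(K, V)_{\frakk}$ with $\frakg$-action $(\xi f)(k) = \presuper{k^{-1}}{\xi} \cdot f(k)$. For any $z \in \calZ(\frakg)$ the adjoint $K$-action is trivial, so $\presuper{k^{-1}}{z} = z$, and $z$ acts on $Fun_M(K, V)$ pointwise by its action on $V$. Any $\calZ(\frakg)$-ideal annihilating $V$ therefore annihilates $Fun_M(K, V)$ and passes to the $\frakk$-coinvariants; the one-dimensional normalizing factor is $\calZ(\frakg)$-trivial. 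Note that $Rel^{MN}_M$, being the forgetful functor, is invisible to this discussion.

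For finite generation, I first reduce to the case of a strict infinitesimal character. Decompose $V$ into its generalized $\calZ(\frakg)$-eigenspaces (finitely many, by the Harish-Chandra condition), filter each by submodules with strict character, and use right-exactness of $\calB$ (as a left adjoint) together with the fact that an extension of finitely generated modules is finitely generated, to reduce to the case $V \in \calM^{[\lambda]}(\frakg, MN)$. When $\lambda$ is regular antidominant, $\Lambda$ and $\Gamma$ are inverse equivalences, so by lemma \ref{lem_Bloc} and lemma \ref{lem_Bcoh} we have $\calB V \cong \Gamma \bbB \Lambda V$ with $\bbB \Lambda V$ coherent, giving that $\calB V$ is Harish-Chandra. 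For general $\lambda$, reduce to the regular antidominant case via translation functors, using that $\calB$ commutes with projection onto infinitesimal character components (immediate from the first step) and with tensoring by a finite-dimensional $G$-module $F$, via the projection formula $Fun_M(K, V \otimes F|_M) \cong Fun_M(K, V) \otimes F$ as $(\frakg, K)$-modules.

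The main obstacle will be the non-regular $\lambda$ step: verifying that the projection formula above is an isomorphism of $(\frakg, K)$-modules compatible with the $\frakk$-coinvariants used to define $Av^M_K$, and checking that the difference of $\frakk$-actions (differentiated $K$-action minus restricted $\frakg$-action) on $Fun_M(K, V) \otimes F$ matches the corresponding action on $Fun_M(K, V \otimes F|_M)$. Once these compatibilities are established, standard translation-functor arguments give $\calB \circ T \cong T \circ \calB$ (up to the summand projection), reducing the general case to the regular antidominant one.
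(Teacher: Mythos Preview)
Your approach is correct but introduces an unnecessary detour. The paper's proof avoids the translation-functor step entirely by using the fact that for \emph{any} antidominant $\lambda$ (not just regular antidominant) one already has $\Gamma\circ\Lambda\cong id$ on $Mod^{[\lambda]}(\frakg,\cdot)$. Thus, after reducing to $V\in\calM^{[\lambda]}(\frakg,MN)$, one picks an antidominant representative $\lambda$ of the given infinitesimal character, applies Lemma~\ref{lem_Bloc} to get $\Lambda\calB V\cong\bbB\Lambda V$, and then simply applies $\Gamma$: since $\Gamma\Lambda\cong id$, this yields $\calB V\cong\Gamma(\bbB\Lambda V)$ directly, and coherence of $\bbB\Lambda V$ (Lemma~\ref{lem_Bcoh}) plus the fact that $\Gamma$ sends coherent $D_\lambda$-modules to finitely generated $U(\frakg)$-modules finishes the argument. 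No splitting into regular/singular cases is needed.

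Your translation-functor route would also work, and the projection-formula compatibility you flag is indeed the only thing to check (it holds, essentially because the $G$-module $F$ is pulled back along $K\to G$ and is inert for the difference $\frakk$-action). What your approach buys is a purely representation-theoretic argument avoiding the slightly subtle Beilinson--Bernstein fact about singular antidominant weights; what the paper's approach buys is brevity and no need to verify that $\calB$ commutes with tensoring by finite-dimensional $G$-modules.
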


\begin{proof}

That $\calB$ preserves infinitesimal character is clear from its formula. Thus it is enough to show that for $V \in \calM(\frakg , MN)$, the module $\calB(V)$ is finitely-generated over $U(\frakg)$. By filtering $V$ we can assume, without loss of generality, that $V \in \calM^{\theta}(\frakg , MN)$ for some infinitesimal character $\theta$.

Let $\lambda \in \wt$ be an antidominant weight satisfying $[\lambda] = \theta$, and consider the localization adjunction $$ \Lambda : Mod^{\theta}(\frakg , \cdot)  \rightleftarrows Mod (D_{\lambda} , \cdot): \Gamma.$$ By lemma \ref{lem_Bloc}, we have $\Lambda (\calB (V)) \is \bbB(\Lambda (V))$. Since $\lambda$ is antidominant, we have $\Gamma \circ \Lambda \is id$, so that we get $\calB (V) \is \Gamma (\bbB(\Lambda (V))).$ Now, $\Lambda (V)$ is coherent, since $V$ is finitely-generated. And since $\bbB$ preserves coherence by lemma \ref{lem_Bcoh}, $\bbB(\Lambda (V))$ is coherent. Since $\Gamma$ sends coherent $D_{\lambda}$-modules to finitely-generated $U(\frakg)$-modules (see \cite[Section L.1, Lemma 23]{M}), we obtain that $\calB (V) \is \Gamma (\bbB(\Lambda (V)))$ is finitely-generated as a $U(\frakg)$-module.

\end{proof}

\begin{claim}
	The functor $\calC$ preserves infinitesimal character and sends $\calM(\frakg , K)$ into $\calM(\frakg , MN)$.
\end{claim}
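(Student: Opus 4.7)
The plan is to mirror exactly the proof just given for $\calB$. The claim has two parts: that $\calC$ preserves infinitesimal characters (the easy assertion), and that it sends Harish-Chandra modules to Harish-Chandra modules (the substantive one, which further splits into local $\calZ(\frakg)$-finiteness and finite generation over $U(\frakg)$).

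First I would establish infinitesimal character preservation directly from the explicit formula $\calC(V) \cong \wedge^{\text{top}}(\frakk/\frakm) \otimes \left(\prod_\alpha V_\alpha\right)^{M\text{-fin.},\,\frakn\text{-tor.}}$. Since $\calZ(\frakg)$ commutes with $K$, each isotypic component $V_\alpha$ is a $\calZ(\frakg)$-stable subspace of $V$, so any polynomial in $\calZ(\frakg)$ annihilating $V$ annihilates every $V_\alpha$; by the type-wise/continuous extension that defines the $\frakg$-action on $\prod_\alpha V_\alpha$, the same polynomial annihilates the whole product, and a fortiori the subspace $\calC(V)$. In particular, local $\calZ(\frakg)$-finiteness of $\calC(V)$ is automatic once we know the corresponding property for $V$.

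For the finite generation of $\calC(V)$ over $U(\frakg)$, I would mirror the previous claim's proof. Using $\calZ(\frakg)$-local finiteness of $V$, decompose $V = \bigoplus_i V^{\theta_i}$ as a finite direct sum of generalized-infinitesimal-character Harish-Chandra summands; as $\calC$ commutes with finite direct sums (visible from the formula), we reduce to $V \in \calM^\theta(\frakg,K)$ for a single $\theta$. Pick a regular antidominant $\lambda \in \wt$ with $[\lambda] = \theta$, so that $\Lambda$ and $\Gamma$ give mutually inverse equivalences on this infinitesimal block. By the commutation of $\calC$ with globalization (lemma \ref{prop_CasGamma}, announced in item (3) of the Summary), we obtain $\calC(V) \cong \Gamma(\bbC(\Lambda V))$. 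Since $\Lambda V$ is coherent, and $\Gamma$ sends coherent $D_\lambda$-modules to finitely generated $U(\frakg)$-modules by \cite[Section L.1, Lemma 23]{M}, the proof reduces to the coherence-preservation statement for $\bbC$, namely the right-adjoint analogue of lemma \ref{lem_Bcoh}.

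The hard part is exactly this coherence-preservation of $\bbC$. Unlike $\bbB$, which was built from six-functor operations that manifestly preserve holonomicity, $\bbC$ is a right adjoint constructed via a completion-type procedure, so coherence does not come for free. I expect this to be established in chapter \ref{chp_bbC} by giving an explicit orbit-by-orbit description of $\bbC(\calF)$ on the finitely many $MN$-orbits in $X$, reducing coherence to an $\frakn$-torsion finiteness statement on each orbit, which in turn is controlled by the finite-dimensionality of the $K$-multiplicity spaces in $\Gamma(\calF)$.
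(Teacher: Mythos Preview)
Your outline has a genuine circularity problem relative to the paper's logical structure. You propose to deduce finite generation of $\calC(V)$ from coherence preservation of $\bbC$ via lemma~\ref{prop_CasGamma}. But in this paper, the functor $\bbC$ on the \emph{coherent} categories $\calM(D_{\lambda},K)\to\calM(D_{\lambda},MN)$ is not constructed independently: Claim~\ref{clm_bbC} produces $\bbC$ (for regular antidominant $\lambda$) precisely by transporting $\calC$ through the Beilinson--Bernstein equivalence, i.e.\ $\bbC \cong \Lambda\circ\calC\circ\Gamma$, and then extends to other $\lambda$ by twisting. So the statement ``$\bbC$ preserves coherence'' is, in this paper, literally the statement ``$\calC$ preserves Harish-Chandra'', and lemma~\ref{prop_CasGamma} is derived from Claim~\ref{clm_bbC} by adjunction. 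Your expected ``orbit-by-orbit description of $\bbC$'' does not appear anywhere; there is no independent handle on $\bbC$ to close the loop.

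The paper's proof is instead purely algebraic and avoids $D$-modules entirely. It invokes a general criterion (lemma~\ref{lem_MNcat}): a $U(\frakg)$-module which is the union of its $\frakn^m$-annihilated subspaces, has each such subspace finite-dimensional, and has $\calZ(\frakg)$ acting through a finite-dimensional quotient, is automatically finitely generated over $U(\frakg)$. To verify the finite-dimensionality hypothesis for $\calC(V)$, one uses proposition~\ref{prop_relJC} to rewrite $\calC(V)\cong J^*(V^{\vee})$ (up to the harmless line); then $\calC(V)^{\frakn^m}$ identifies with the space of functionals on $W:=V^{\vee}$ vanishing on $\frakn^m W$. Since a Harish-Chandra $(\frakg,K)$-module is finitely generated over $U(\frakn)$, the subspace $\frakn^m W\subset W$ has finite codimension, and the claim follows. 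The infinitesimal-character part is, as you say, immediate from the formula.
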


\begin{proof}

That $\calC$ preserves infinitesimal character is clear from its formula. Thus it is enough to show that for $V \in \calM(\frakg , K)$, the module $\calC(V)$ is finitely-generated over $U(\frakg)$.

By the lemma following this claim (lemma \ref{lem_MNcat}) it is enough to show that $\calC(V)^{\frakn^m}$ are finite-dimensional, where $V \in \calM(\frakg , K)$, and $(\bullet)^{\frakn^m} := \{ w \in \bullet \ | \ \frakn^m w = 0\}$. Proposition \ref{prop_relJC}, that we will discuss in the next subsection, expresses $\calC(V)$ as $J^*(V^{\vee})$ (times a factor which does not matter). Writing $W := V^{\vee}$, $J^*(W)^{\frakn^m}$ consists of functionals on $W$ which kill $\frakn^m W$. Since $W$, being a module in $\calM (\frakg , K)$, is finitely generated as an $U(\frakn)$-module, $\frakn^m W$ is of finite codimension in $W$, and this shows that $J^*(W)^{\frakn^m}$ is finite-dimensional.

\end{proof}

We needed the following lemma in the previous claim:

\begin{lemma}\label{lem_MNcat}
Let $V$ be a $U(\frakg)$-module for which: (1) $V = \cup_{m \ge 1} V^{\frakn^m}$ where $V^{\frakn^m} = \{ v \in V \ | \ \frakn^m v = 0 \}$ (2) $V^{\frakn^m}$ is finite-dimensional for every $m \ge 1$ and (3) $\calZ(\frakg)$ acts on $V$ via a finite-dimensional quotient. Then $V$ is finitely-generated as an $U(\frakg)$-module.
\end{lemma}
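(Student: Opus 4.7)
The plan is to prove finite generation of $V$ by inductively peeling off the $\frakn$-socle of successive quotients, using the central character to restrict the relevant weights to a finite subset of $\wt$. First I reduce: since $\calZ(\frakg)$ acts on $V$ through a finite-dimensional quotient, $V$ splits as a finite direct sum of generalized $\calZ$-primary components, each still satisfying (1)--(3); a finite direct sum of finitely generated modules is finitely generated, so I may assume $V$ has a single generalized central character $\theta = [\lambda]$. Next, from $[\frakh, \frakn^m] \subseteq \frakn^m$ I conclude that each $V^{\frakn^m}$ is $\frakh$-stable; being finite-dimensional it decomposes into generalized $\frakh$-weight spaces, and taking unions yields $V = \bigoplus_\mu V_\mu$ with $\frakh$ locally finite.

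I then establish two structural facts. (a) Every generalized weight of $V^{\frakn}$ lies in the finite set $W \cdot \lambda$: on a generalized-weight-$\mu$ vector inside $V^{\frakn}$, the Harish-Chandra homomorphism forces $\calZ(\frakg)$ to act through $[\mu]$, hence $[\mu] = \theta = [\lambda]$. (b) Each $V_\mu$ is finite-dimensional: given $v \in V_\mu$, take $j$ maximal with $\frakn^j v \neq 0$; then $0 \neq \frakn^j v \in V^{\frakn}$, so some weight component of it has weight $\mu + \xi$ where $\xi$ is a sum of $j$ positive roots, and this weight lies in $W \cdot \lambda$ by (a). This bounds $j$ in terms of $\mu$ and the finite set $W \cdot \lambda$, so $V_\mu \subseteq V^{\frakn^{m_0}}$ for some $m_0 = m_0(\mu)$ and is therefore finite-dimensional.

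Finally, I construct a chain of finitely generated submodules by $W_0 := 0$ and $W_{k+1} := W_k + U(\frakg) \cdot \tilde S_k$, where $\tilde S_k \subseteq \bigoplus_{\nu \in W \cdot \lambda} V_\nu$ is a finite-dimensional subspace whose image in $V/W_k$ spans $(V/W_k)^{\frakn}$. The quotient $V/W_k$ inherits all the hypotheses, so (a) and (b) applied to it guarantee that $(V/W_k)^{\frakn}$ is finite-dimensional with weights in $W \cdot \lambda$; the lift $\tilde S_k$ exists because each $V_\nu$ surjects onto $(V/W_k)_\nu$. Each nontrivial step strictly decreases the nonnegative integer $\sum_{\nu \in W \cdot \lambda} \dim (V/W_k)_\nu$, which is bounded above by $\sum_{\nu \in W \cdot \lambda} \dim V_\nu < \infty$; so the process terminates at some finite stage $K$ with $(V/W_K)^{\frakn} = 0$. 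Combined with local $\frakn$-nilpotence of $V/W_K$ this forces $V/W_K = 0$ (any nonzero $\bar v$ killed by a minimal power of $\frakn$ would have its last nontrivial $\frakn$-iterate lie in the vanishing socle), so $V = W_K$ is finitely generated. The main obstacle will be step (b), establishing finite-dimensionality of the weight spaces, since that is where the interplay between local $\frakn$-nilpotence and the central character must be exploited most delicately — without hypothesis (3), $V_\mu$ could easily fail to be finite-dimensional.
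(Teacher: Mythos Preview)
Your proof is correct and follows essentially the same route as the paper's: decompose $V$ into generalized $\frakh$-weight spaces using $\frakh$-stability of each $V^{\frakn^m}$; use the Harish-Chandra homomorphism together with hypothesis (3) to confine the weights appearing in $V^{\frakn}$ to a finite set $S$; deduce that each generalized weight space $V_\mu$ is finite-dimensional because the weights in $V^{\frakn^{m+1}}/V^{\frakn^m}$ drift away from $S$ as $m$ grows; and conclude by showing that any quotient with vanishing $\frakn$-socle must be zero.

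The only genuine difference is in the final step. You build an increasing chain $W_0\subset W_1\subset\cdots$ by repeatedly lifting the $\frakn$-socle of the successive quotients and arguing that a finite invariant $\sum_{\nu\in S}\dim(V/W_k)_\nu$ strictly decreases. The paper does this in one stroke: since $\sum_{\nu\in S}V_\nu$ is already finite-dimensional, it simply sets $W:=U(\frakg)\cdot\sum_{\nu\in S}V_\nu$ and observes that $V/W$ has no weights in $S$, hence $(V/W)^{\frakn}=0$, hence $V/W=0$. Your iteration is harmless but unnecessary; once you know each $V_\nu$ is finite-dimensional, the single generator set $\sum_{\nu\in S}V_\nu$ already does the job.
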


\begin{proof} Denote by $V^{\omega} \subset V$ the generalized weight space of $\frakt$ with weight $\omega \in \frakt^*$. Since $\frakt$ stabilizes the $V^{\frakn^m}$'s, and in view of (1), $V = \oplus_{\omega} V^{\omega}$. Furthermore, by the Harish-Chandra isomorphism, and in view of (3), the weights by which $\frakt$ acts on $V^{\frakn}$ are all contained in a finite subset $S \subset \frakt^*$. This forces that for any fixed $\omega \in \frakt^*$, there exists $m \ge 1$ such that $V^{\omega} \subset V^{\frakn^m}$ (because weights appearing in $V^{\frakn^{m+1}} / V^{\frakn^m}$ become further and further away from $S$, as $m$ grows). From this we conclude that the $V^{\omega}$'s are all finite-dimensional.

Denote now by $W$ the sub-$U(\frakg)$-module of $V$ generated by $\sum_{\omega \in S} V^{\omega}$. $W$ is a finitely generated $U(\frakg)$-module, and we would like to show that $V=W$, thus concluding the proof.

To that end, consider $U:= V / W$. As a quotient of $V$, $U$ is also the sum of finite-dimensional generalized weight spaces of $\frakt$. Since taking generalized weight space is an exact functor, $U^{\omega} = 0$ for $\omega \in S$. But this forces $U^{\frakn} = 0$, since again all weights by which $\frakt$ acts on $U^{\frakn}$ are contained in $S$. Now, $U^{\frakn} = 0$ forces $U = 0$ (since, would we have a non-zero vector in $U$, we would start applying elements from $\frakn$ to it; Eventually it will become zero, and at one step before that, we will get a non-zero element of $U^{\frakn}$). Thus we have $V=W$ and we are done.

\end{proof}

From now on, we will consider the functors $\calC , \calB , \bbB$ from, and to, the categories $\calM(\cdot)$ rather than $Mod(\cdot)$.

\section{Comparison with the traditional Casselman-Jacquet functor}

Let us describe how the functor $$\calC : \calM (\frakg , MN) \to \calM(\frakg , K)$$ is related to the "usual Jacquet-Casselman" functor.

First, let us recall the "usual Jacquet-Casselman" functor, in its contravariant form (call it $J^*$): For a module $V \in \calM(\frakg , K)$, $J^*(V)$ is the submodule of $\frakn$-torsion vectors in the abstract dual of $V$.

Recall also the duality equivalence $(\cdot)^{\vee} : \calM (\frakg , K)^{op} \to \calM (\frakg , K)$, given by sending a module $V = \oplus V_{\alpha}$ to $V^{\vee} := \oplus V_{\alpha}^*$ (here, $V_{\alpha}$ denote the $K$-types of $V$).

\begin{proposition}\label{prop_relJC}
$\calC \is \wedge^{\text{top}}(\frakk / \frakm) \otimes J^* \circ (\cdot)^{\vee}$ (as functors $\calM(\frakg , K) \to \calM(\frakg , MN)$).
\end{proposition}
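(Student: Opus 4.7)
The strategy is to simply unpack both sides using the explicit formula for $\calC$ from section \ref{sec_frml}, and then identify $(V^\vee)^*$ canonically with $\prod_\alpha V_\alpha$. Recall the formula
\[
\calC(V) \;\cong\; \wedge^{\text{top}}(\frakk/\frakm) \otimes \Bigl(\prod_\alpha V_\alpha\Bigr)^{M\text{-fin.},\,\frakn\text{-tor.}},
\]
whereas by definition $J^*(V^\vee)$ is the $\frakn$-torsion part of the full abstract dual $(V^\vee)^*$. So, modulo the one-dimensional factor, the goal is to produce a natural $(\frakg,MN)$-equivariant isomorphism between $\frakn$-torsion vectors in $\prod_\alpha V_\alpha$ and $\frakn$-torsion vectors in $(V^\vee)^*$.

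The first step is the canonical identification $(V^\vee)^* \cong \prod_\alpha V_\alpha$. Since $V$ lies in $\calM(\frakg,K)$, each $K$-type $V_\alpha$ is finite-dimensional, hence $V_\alpha^{**} \cong V_\alpha$ canonically. Combining with $V^\vee = \bigoplus_\alpha V_\alpha^*$ gives $(V^\vee)^* \cong \prod_\alpha V_\alpha^{**} \cong \prod_\alpha V_\alpha$, via the pairing $\langle (v_\alpha),\, \sum \phi_\alpha \rangle := \sum_\alpha \phi_\alpha(v_\alpha)$. Second, observe that the $M$-finite condition on the right-hand side of the $\calC$-formula is automatic in our setting, because $M$ is a finite group (as noted in section \ref{sec_setting}), so the $M$-span of any vector is at most $|M|$-dimensional. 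Thus after the identification, the underlying vector space of $\calC(V)$ matches $\wedge^{\text{top}}(\frakk/\frakm)\otimes J^*(V^\vee)$.

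Next one has to check that the $(\frakg, MN)$-actions match. The canonical embedding $V \hookrightarrow (V^\vee)^*$, sending $v \mapsto (\psi \mapsto \psi(v))$, is $(\frakg, K)$-equivariant: for $\xi \in \frakg$, $v \in V$, $\psi \in V^\vee$, we have $(\xi \hat v)(\psi) = -\hat v(\xi\psi) = \psi(\xi v) = \widehat{\xi v}(\psi)$, and similarly for the $K$-action. Under the identification $V \subset \prod_\alpha V_\alpha = (V^\vee)^*$, the induced $\frakg$-action on $\prod V_\alpha$ is the unique "continuous" extension of the original $\frakg$-action on $V = \bigoplus V_\alpha$ (i.e., the type-wise action from section \ref{sec_frml}). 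This matches the description used in the definition of $\calC$. Similarly, the $M$-action agrees, and both the $\frakn$-action (via restricting the $\frakg$-action) and the resulting exponentiated $N$-action on the $\frakn$-torsion subspace agree on both sides. Putting this together, the isomorphism passes to the $\frakn$-torsion subspaces and yields the claimed natural isomorphism of $(\frakg, MN)$-modules; naturality in $V$ is immediate from the functoriality of all the constructions involved.

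The calculation is essentially a repackaging, so no single step is truly hard; the main thing to be careful about is the sign/convention bookkeeping for the dual $\frakg$-action, and recording precisely that the ``type-wise'' $\frakg$-action built into the formula for $\calC$ coincides with the action coming from the double-dual embedding $V \hookrightarrow (V^\vee)^*$. The one-dimensional twist $\wedge^{\text{top}}(\frakk/\frakm)$ appears identically on both sides, so it can be factored out at the very end.
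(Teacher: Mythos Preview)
Your proof is correct and follows essentially the same route as the paper: identify $(V^{\vee})^*$ with $\prod_{\alpha} V_{\alpha}$, take $\frakn$-torsion, and observe that the $M$-finiteness condition can be dropped. The only noteworthy difference is in how the $M$-finiteness is dismissed: you use that $M$ is a finite group (so every vector is automatically $M$-finite), whereas the paper argues that the subspaces killed by $\frakn^m$ are finite-dimensional and $M$-stable, hence consist of $M$-finite vectors. Both arguments are valid; yours is actually the more direct one in this setting.
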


\begin{proof}
This is immediate. Clearly, $\prod V_{\alpha}$ can be identified with the abstract dual of $V^{\vee}$. Thus, $\calC(V)$ can be identified with the $\frakn$-torsion, $M$-finite vectors in the abstract dual of $V^{\vee}$. Since the subspaces killed by $\frakn^m$ are finite-dimensional and preserved by $M$, one actually can drop the demand "$M$-finite".
\end{proof}

Casselman shows that $J^*$ is exact. We will briefly recall how to show it in subsection \ref{sec_Jex}.

\begin{theorem}[Casselman]\label{thm_Cex}
	The functor $\calC: \calM(\frakg , K) \to \calM(\frakg , MN)$ is exact.
\end{theorem}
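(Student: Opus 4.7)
The plan is to reduce the statement to an Artin--Rees type property for the $\frakn$-adic filtration on Harish-Chandra modules, which is the classical content of Casselman's original argument.

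First, I would use Proposition \ref{prop_relJC}, which gives $\calC \is \wedge^{\text{top}}(\frakk/\frakm) \otimes J^* \circ (\cdot)^{\vee}$. Tensoring with a one-dimensional vector space is trivially exact, and the duality functor $(\cdot)^{\vee}: \calM(\frakg,K)^{op} \to \calM(\frakg,K)$ is an exact contravariant equivalence (it is given by summing $K$-type-wise vector space duals). Hence it suffices to show that the contravariant functor $J^*: \calM(\frakg,K) \to \calM(\frakg,MN)$ carries short exact sequences to short exact sequences.

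Next, I would unwind the definition $J^*(V) = \bigcup_{m\ge 1} (V/\frakn^m V)^*$ and apply it to a short exact sequence $0 \to V' \to V \to V'' \to 0$. Left-exactness of $J^*$ (as a contravariant functor, meaning exactness at $J^*(V'')$ and $J^*(V)$) is formal: it only uses that $V' \to V$ is injective and that $(\cdot)^*$ is exact on $k$-vector spaces. The only non-trivial point is the surjectivity of the restriction map $J^*(V) \twoheadrightarrow J^*(V')$. Given $\phi \in J^*(V')$ with $\frakn^m \phi = 0$, i.e.\ $\phi \in (V'/\frakn^m V')^*$, the obstruction is whether one can find $m' \ge m$ so that $\phi$ extends to a functional on $V$ annihilating $\frakn^{m'}V$; once such $m'$ exists, extending $\phi$ by zero on $\frakn^{m'}V$ and arbitrarily on a linear complement produces the required preimage in $J^*(V)$. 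This reduces the whole problem to the following:

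\begin{center}
\emph{Artin--Rees property:} For every inclusion $V' \subset V$ in $\calM(\frakg,K)$ and every $m \ge 1$, there exists $m' \ge m$ such that $V' \cap \frakn^{m'}V \subseteq \frakn^m V'$.
\end{center}

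This Artin--Rees property is the real content, and it is the step I expect to be the main obstacle. The standard proof (due to Casselman, with variants by Wallach and Jantzen) goes as follows: every $V \in \calM(\frakg,K)$ is finitely generated over $U(\bar\frakn)$ (a consequence of finite $\calZ(\frakg)$-action, local finiteness of $\frakb$, and a PBW/Nakayama type argument, equivalent to the algebraic subrepresentation theorem). One then forms the Rees module $\bigoplus_{m\ge 0} \frakn^m V$ over the Rees algebra $\bigoplus_{m\ge 0} \frakn^m$, observes that it is finitely generated over a Noetherian graded ring (using the noetherianity of $U(\bar\frakn)$ combined with the PBW filtration), and deduces the required comparability of the induced and $\frakn$-adic filtrations on $V'$ in the usual way. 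With this lemma in hand, the surjectivity of $J^*(V) \to J^*(V')$ is immediate, and hence $\calC$ is exact.
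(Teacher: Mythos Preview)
Your overall strategy matches the paper's exactly: reduce via Proposition~\ref{prop_relJC} to the exactness of $J^*$, and then establish the latter by an Artin--Rees argument for the $\frakn$-adic filtration. The paper carries this out in the generality of finitely generated $U(\frakn)$-modules (Section~\ref{sec_Jex}): it passes to the completion functor $\hat J$, reduces exactness to the statement that for $U\subset V$ there exists $d$ with $\frakn^{m+d}V\cap U\subset\frakn^m U$, and proves this by showing the Rees algebra $U(\frakn)^{\star}=\bigoplus_{m\ge 0}\frakn^m U(\frakn)$ is left Noetherian. The key trick (taken from Wallach) is that $U(\frakn)^{\star}$ is a quotient of $U(\tld\frakn)$, where $\tld\frakn=\bigoplus_{m\ge 0}D^m\frakn$ with $D^m\frakn$ the iterated commutators; this Lie algebra is \emph{finite-dimensional} precisely because $\frakn$ is nilpotent, so $U(\tld\frakn)$ is Noetherian.

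Your sketch of the Artin--Rees step, however, is muddled. Finite generation of $V$ over $U(\bar\frakn)$ and Noetherianness of $U(\bar\frakn)$ do not by themselves control the $\frakn$-adic filtration: the Rees module $\bigoplus_m\frakn^m V$ is a module over the Rees algebra of $U(\frakn)$ (or $U(\frakg)$) with respect to the ideal generated by $\frakn$, and for noncommutative Noetherian rings the Rees algebra with respect to an ideal is \emph{not} automatically Noetherian. So ``noetherianity of $U(\bar\frakn)$ combined with the PBW filtration'' is not a valid justification. What you actually need is finite generation of $V$ over $U(\frakn)$ (which does hold for Harish--Chandra modules) together with the Noetherianness of $U(\frakn)^{\star}$, and the latter genuinely requires the nilpotency of $\frakn$ via the $\tld\frakn$ construction above. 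Once you replace your paragraph on Artin--Rees with this argument, your proof coincides with the paper's.
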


\begin{proof}
	This immediately follows from $J^*$ being exact, and proposition \ref{prop_relJC}.
\end{proof}

\subsection{The exactness of $J^*$}\label{sec_Jex}

Let us sketch, following Casselman, how to prove the exactness of $J^*$ mentioned above. In fact, one shows the exactness of a more general functor. Let $\frakn$ be a nilpotent (finite-dimensional) Lie algebra over $k$. For an $\frakn$-module $V$ and $m\ge 1$, we denote $V^{\frakn^m} := \{ v\in V \ | \ \frakn^m v = 0 \}$ and $V_{\frakn^m} := V / \frakn^m V$.

We consider the categories:

\begin{enumerate}[(1)]
\item $\calM(\frakn)$ - the category of finitely-generated $U(\frakn)$-modules.
\item $\calM_{\text{tor}}(\frakn)$ - the category of $U(\frakn)$-modules $V$ which satisfy $V = \varinjlim V^{\frakn^m}$ and that $V^{\frakn}$ is finite-dimensional as a $k$-vector space.
\item $\calM_{\text{com}}(\frakn)$ - the category of $U(\frakn)$-modules $V$ which satisfy $V = \varprojlim V_{\frakn^m}$ and that $V_{\frakn}$ is finite-dimensional as a $k$-vector space.
\end{enumerate}

Also, we consider the functors:

\begin{enumerate}[(1)]
\item $J^* : \calM(\frakn)^{op} \to \calM_{\text{tor}} (\frakn)$ sending a module $V$ to $\varinjlim (V^*)^{\frakn^m}$.
\item $\hat{J} : \calM(\frakn) \to \calM_{\text{com}} (\frakn)$ sending a module $V$ to $\varprojlim V_{\frakn^m}$.
\end{enumerate}

And, we consider the duality functors:

\begin{enumerate}[(1)]
\item $(\cdot)^{\vee} : \calM_{\text{tor}} (\frakn)^{op} \to \calM_{\text{com}} (\frakn)$ sending a module $V$ to $V^*$.
\item $(\cdot)^{\vee} : \calM_{\text{com}} (\frakn)^{op} \to \calM_{\text{tor}} (\frakn)$ sending a module $V$ to $V^{\text{cont}-*}$, the continuous dual; That is, $V^{\text{cont}-*}$ consists of functionals on $V$, which annihilate $\frakn^m V$ for some $m \ge 1$.
\end{enumerate}

\begin{lemma}\
\begin{enumerate}
\item The dualities $(\cdot)^{\vee}$ are well-defined, and are mutually inverse equivalences of categories.
\item One has a canonical isomorphism $J^* \is (\cdot)^{\vee} \circ \hat{J}$.
\end{enumerate}
\end{lemma}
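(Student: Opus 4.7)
The plan is to establish part (1) in full, from which part (2) follows by a direct calculation. Part (1) breaks into: (a) $V^* \in \calM_{\text{com}}(\frakn)$ for $V \in \calM_{\text{tor}}(\frakn)$; (b) $W^{\text{cont-}*} \in \calM_{\text{tor}}(\frakn)$ for $W \in \calM_{\text{com}}(\frakn)$; and (c,d) that the two round-trip compositions are naturally isomorphic to the identity. The crux is (a), and in turn the key algebraic identity
\[
\frakn^m V^{*} \;=\; \{f \in V^{*} : f(V^{\frakn^m}) = 0\},
\]
valid for any $\frakn$-module $V$. The inclusion $\subseteq$ is immediate because the antipode of $U(\frakn)$ preserves $\frakn^m$.

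For the reverse inclusion, I will exploit that for any basis $X_1, \ldots, X_r$ of $\frakn$, the monomials $X_{i_1}\cdots X_{i_m}$ generate $\frakn^m$ as a left ideal of $U(\frakn)$ (this uses that $\frakn$ is finite-dimensional), so that
\[
\Phi \colon V/V^{\frakn^m} \;\hookrightarrow\; V^{r^m}, \qquad v \;\mapsto\; (X_{i_1}\cdots X_{i_m} v)_{(i_1,\ldots,i_m)},
\]
is injective. Any $f \in V^{*}$ vanishing on $V^{\frakn^m}$ descends to a functional on $V/V^{\frakn^m}$; extending the induced functional from the subspace $\Phi(V/V^{\frakn^m}) \subseteq V^{r^m}$ to all of $V^{r^m}$ (using that any functional on a subspace extends) yields a tuple $(g_{i_1,\ldots,i_m}) \in (V^{*})^{r^m}$ which, after translating through the antipode, writes $f$ as a finite sum $\sum \xi_{i_1,\ldots,i_m}\cdot g_{i_1,\ldots,i_m}$ with each $\xi_{i_1,\ldots,i_m} \in \frakn^m$. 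Given this identity, $V^{*}/\frakn^m V^{*} \is V_m^{*}$ (where $V_m := V^{\frakn^m}$), and $V^{*} = \varprojlim V_m^{*}$ yields $V^{*} = \varprojlim V^{*}/\frakn^m V^{*}$. Finite-dimensionality of $V_m$ is bootstrapped from $\dim V^{\frakn} < \infty$ via the injection $V^{\frakn^{m+1}}/V^{\frakn^m} \hookrightarrow (V^{\frakn^m}/V^{\frakn^{m-1}})^r$, $v \mapsto (X_i v)$, whose kernel equals $V^{\frakn^m}$.

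For (b), finite-dimensionality of $W/\frakn^m W$ follows by induction from the surjection $(\frakn^k/\frakn^{k+1}) \otimes W/\frakn W \twoheadrightarrow \frakn^k W/\frakn^{k+1} W$ together with $\frakn^k/\frakn^{k+1} \is S^k(\frakn)$ by PBW; then $(W^{\text{cont-}*})^{\frakn^m} = (W/\frakn^m W)^{*}$ using once again $S(\frakn^m) = \frakn^m$, which gives both defining properties of $\calM_{\text{tor}}(\frakn)$. The round trips (c,d) collapse to finite-dimensional double duality layer by layer: $(V^{*})^{\text{cont-}*} = \varinjlim V_m^{**} = \varinjlim V_m = V$, and $(W^{\text{cont-}*})^{*} = \varprojlim (W/\frakn^m W)^{**} = W$. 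For part (2), Artin-Rees for the Noetherian algebra $U(\frakn)$ gives $\hat{J}(V)/\frakn^m \hat{J}(V) \is V/\frakn^m V$, placing $\hat{J}(V)$ in $\calM_{\text{com}}(\frakn)$, and then
\[
\hat{J}(V)^{\vee} \;=\; \varinjlim_m \bigl(\hat{J}(V)/\frakn^m \hat{J}(V)\bigr)^{*} \;=\; \varinjlim_m (V/\frakn^m V)^{*} \;=\; \varinjlim_m (V^{*})^{\frakn^m} \;=\; J^{*}(V),
\]
the penultimate equality being the general identity from paragraph~2 applied to $V$. The main obstacle is that very inclusion $(V/V^{\frakn^m})^{*} \subseteq \frakn^m V^{*}$: the finite generation of $\frakn^m$ as a left ideal, which rests critically on $\dim \frakn < \infty$, is what lets one turn a coherent family of finite-dimensional decompositions on each $V_n$ into a single finite-sum expression in $\frakn^m V^{*}$ rather than merely a formal limit.
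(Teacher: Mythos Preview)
The paper states this lemma without proof, so there is nothing to compare your argument against; your write-up is essentially a correct filling-in of the omitted details. The central identity $\frakn^m V^{*} = \operatorname{ann}(V^{\frakn^m})$ is the real content, and your extension-of-functionals argument for the nontrivial inclusion is clean and correct.

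Two small inaccuracies, neither of which damages the proof. First, the isomorphism $\frakn^k/\frakn^{k+1} \cong S^k(\frakn)$ fails for non-abelian $\frakn$ (e.g.\ in the Heisenberg algebra the central element lies in $\frakn^2$, so $\frakn/\frakn^2$ already has the wrong dimension); what you actually need, and what holds, is that $\frakn^k/\frakn^{k+1}$ is finite-dimensional, being a quotient of $\frakn^{\otimes k}$. Your surjection $(\frakn^k/\frakn^{k+1}) \otimes W/\frakn W \twoheadrightarrow \frakn^k W/\frakn^{k+1} W$ is correct regardless, so the conclusion stands. Second, the ``penultimate equality'' $(V/\frakn^m V)^{*} = (V^{*})^{\frakn^m}$ in your chain for part~(2) is not the identity from your second paragraph but its easy dual: $\frakn^m f = 0$ iff $f$ vanishes on $\frakn^m V$, which is immediate from $S(\frakn^m)=\frakn^m$ and needs no extension argument. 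Your invocation of Artin--Rees to identify $\hat{J}(V)/\frakn^m \hat{J}(V)$ with $V/\frakn^m V$ is appropriate and indeed necessary; the paper establishes the required Artin--Rees property for $U(\frakn)$ in the proposition immediately following this lemma.
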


\begin{proposition}
The functor $\hat{J}$ is exact.
\end{proposition}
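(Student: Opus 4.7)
The plan is to deduce exactness of $\hat J$ from a (noncommutative) Artin--Rees lemma for the ring $U(\frakn)$ and its augmentation ideal $I := \frakn\, U(\frakn)$, so that $I^m V = \frakn^m V$. Given a short exact sequence $0 \to V' \to V \to V'' \to 0$ in $\calM(\frakn)$, a direct diagram chase (the snake lemma) gives, for every $m \ge 1$, a short exact sequence
$$ 0 \to V'/(V'\cap \frakn^m V) \to V/\frakn^m V \to V''/\frakn^m V'' \to 0. $$
Taking $\varprojlim_m$ is left exact, and the transition maps in the tower $\{V'/(V'\cap \frakn^m V)\}_m$ are all surjective (each term is a quotient of the previous), so the Mittag--Leffler criterion upgrades the above to a short exact sequence
$$ 0 \to \varprojlim_m V'/(V'\cap \frakn^m V) \to \hat J(V) \to \hat J(V'') \to 0. $$

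What remains is to identify the left-hand limit with $\hat J(V') = \varprojlim_m V'/\frakn^m V'$. The inclusion $\frakn^m V' \subseteq V'\cap \frakn^m V$ is automatic, so the point is a cofinality statement in the opposite direction: there must exist $c \ge 0$ such that
$$ V' \cap \frakn^m V \subseteq \frakn^{m-c} V' \quad \text{for all } m \ge c. $$
This is the Artin--Rees property for the submodule $V' \subseteq V$ with respect to $I$. Once it is established, the two towers of quotients of $V'$ are cofinal, have the same inverse limit, and the proof is complete.

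I would prove the Artin--Rees property by the Rees-ring method. Let $R := \bigoplus_{m \ge 0} I^m\, t^m \subseteq U(\frakn)[t]$. By PBW, $I$ is finitely generated as a one-sided ideal by a basis of $\frakn$, so $R$ is generated over $U(\frakn)$ by the finite-dimensional subspace $\frakn \cdot t$. Because $\frakn$ is nilpotent, filtering $R$ by the lower central series of $\frakn$ makes the associated graded a quotient of a commutative polynomial algebra over $U(\frakn)$, and hence Noetherian; therefore $R$ is itself Noetherian. Applying this to the finitely generated $R$-module $\bigoplus_m \frakn^m V \cdot t^m$, its $R$-submodule $\bigoplus_m (V' \cap \frakn^m V) \cdot t^m$ is also finitely generated, and the top degree of a homogeneous generating set yields the constant $c$.

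The main obstacle is this noncommutative Artin--Rees step; everything else is formal. An equivalent route, closer to Casselman's original argument, is to induct on $\dim \frakn$ by factoring out a one-dimensional central ideal of $\frakn$ at each stage, which reduces the Artin--Rees statement to a situation sufficiently close to the classical commutative one that the usual proof applies.
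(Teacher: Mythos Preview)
Your proposal is correct and follows essentially the same route as the paper: reduce exactness of $\hat J$ to an Artin--Rees statement for $U(\frakn)$, and prove that via Noetherianity of the Rees ring $\bigoplus_{m\ge 0}\frakn^m U(\frakn)$, using nilpotency of $\frakn$ in an essential way.

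The only noteworthy difference is in how Noetherianity of the Rees ring is established. You sketch a filtration argument making the associated graded a polynomial ring in central variables over $U(\frakn)$; this can be made to work, but as written the filtration is not specified precisely. The paper (following Wallach) packages the same idea more cleanly: one observes that the finite-dimensional graded Lie algebra $\tld\frakn := \bigoplus_{m\ge 0} D^m\frakn$ (with $D^0\frakn=D^1\frakn=\frakn$ and $D^m\frakn=[D^{m-1}\frakn,\frakn]$ for $m\ge 2$) sits inside the Rees ring and generates it, so there is a surjection $U(\tld\frakn)\twoheadrightarrow \bigoplus_m \frakn^m U(\frakn)$; since $\tld\frakn$ is finite-dimensional (precisely because $\frakn$ is nilpotent), $U(\tld\frakn)$ is Noetherian and so is its quotient. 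This avoids having to name a filtration and check the associated graded by hand.
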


\begin{proof}
This is done as in the classical Artin-Rees approach.

Step 1: It is enough to show that for $U \subset V \in \calM(\frakn)$, there exists $d \ge 0$ such that $\frakn^{m+d} V \cap U \subset \frakn^m U$ for large $m$.

Step 2: To show what is wanted in step 1, it is enough to show that the algebra $U(\frakn)^{\star}$ is left Noetherian; Here, $U(\frakn)^{\star}$ is $ \oplus_{m\ge 0} \frakn^m U(\frakn)$. Indeed, once we know that $U(\frakn)^{\star}$ is left Noetherian, we consider (with the notation of step 1) the $U(\frakn)^{\star}$-module $U_V^{\star} := \oplus_{m\ge 0} (\frakn^m V \cap U)$. It is finitely generated being a submodule of $V^{\star} := \oplus_{m\ge 0} \frakn^m V$. Writing what the finite-generation means, immediately gives what is wanted in step 1.

Step 3: (taken from \cite[section 0.6.4]{W}) To show that $U(\frakn)^{\star}$ is left Noetherian, consider $\tld{\frakn} = \oplus_{m\ge 0} D^m \frakn \subset U(\frakn)^{\star}$. Here, $D^0 \frakn := \frakn , D^1 \frakn := \frakn$, and $D^m \frakn = [ D^{m-1} \frakn , \frakn]$ for $m \ge 2$. Now, $\tld{\frakn}$ is a Lie algebra inside $U(\frakn)^{\star}$, and thus we get a morphism $U(\tld{\frakn}) \to U(\frakn)^{\star}$. This morphism is surjective (since already $\frakn$ in degrees $0$ and $1$ suffice to generate $U(\frakn)^{\star}$). Now note, crucially, that $\tld{\frakn}$ is finite-dimensional, since $\frakn$ is nilpotent! Hence $U(\tld{\frakn})$ is left Noetherian (as the universal enveloping algebra of a finite-dimensional Lie algebra), and thus $U(\frakn)^{\star}$ is left Noetherian as a quotient of $U(\tld{\frakn})$.

\end{proof}

\begin{corollary}
The functor $J^*$ is exact.
\end{corollary}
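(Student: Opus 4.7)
The plan is to derive exactness of $J^*$ by combining the two ingredients just established: the exactness of $\hat{J}$ (the preceding proposition) and the identification $J^* \is (\cdot)^{\vee} \circ \hat{J}$ (part (2) of the preceding lemma).

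First I would recall from part (1) of the lemma that the duality functor $(\cdot)^{\vee} : \calM_{\text{com}}(\frakn)^{op} \to \calM_{\text{tor}}(\frakn)$ is an equivalence of categories (with inverse the other $(\cdot)^{\vee}$). In particular, it is a contravariant exact functor: it takes short exact sequences to short exact sequences (just with arrows reversed). Next, by the proposition, $\hat{J} : \calM(\frakn) \to \calM_{\text{com}}(\frakn)$ is exact. Therefore the composition $(\cdot)^{\vee} \circ \hat{J} : \calM(\frakn)^{op} \to \calM_{\text{tor}}(\frakn)$ is contravariant exact, and by part (2) of the lemma this composition is canonically isomorphic to $J^*$. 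Hence $J^*$ is exact.

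There is essentially no obstacle here; all the work has been done in the proposition (which required the Artin--Rees style argument and the observation that $U(\tld{\frakn})$ is the enveloping algebra of a finite-dimensional Lie algebra, hence Noetherian). The only thing to double-check when writing it up is that the equivalence $(\cdot)^{\vee}$ in the lemma is genuinely an equivalence and not merely a functor, since this is what lets us transport exactness across it — but this is exactly the content of part (1) of the lemma.
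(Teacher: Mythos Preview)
Your proposal is correct and matches the paper's proof exactly: the paper simply says ``This follows from the proposition and $J^* \is (\cdot)^{\vee} \circ \hat{J}$,'' which is precisely the argument you have spelled out in more detail.
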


\begin{proof}
This follows from the proposition and $J^* \is (\cdot)^{\vee} \circ \hat{J}$.
\end{proof}

%%%%%%%%%%%%%%%%%%%%%%%%%%%%%%%%%%%%%
%%%%%%%%%%%%%%%%%%%%%%%%%%%%%%%%%%%%%
%The derived geometric Bernstein functor
%%%%%%%%%%%%%%%%%%%%%%%%%%%%%%%%%%%%%
%%%%%%%%%%%%%%%%%%%%%%%%%%%%%%%%%%%%%

\pagebreak
\chapter{The derived geometric Bernstein functor}

In this chapter we define the derived geometric Bernstein functor $\bbB^{der}$. When composing it with $0$-th cohomology, we reobtain $\bbB$. We prove a basic (for us) property of $\bbB^{der}$, that is sends standard objects to acyclic objects.

Let us mention that although in the technical approach of this thesis we only use $\bbB^{der}$ to obtain properties of $\bbB$, a better approach would be to consider $\bbB^{der}$ as the "main player". One would like then to show that $\bbB^{der}$ admits a right adjoint. See conjectures \ref{cnj_gbfa} and \ref{cnj_gjf}.

\section{Equivariant derived categories}

Let $Y$ be a smooth algebraic variety, equipped with an action of an affine algebraic group $H$. Let $\calA$ be an $H$-equivariant TDO on $Y$. We can consider the abelian category $\calM (\calA , H)$ of holonomic $H$-equivariant $\calA$-modules on $Y$. Bernstein and Lunts (\cite{BL}) explain how to define a bounded $t$-category $\calD (\calA , H)$, with heart $\calM (\calA , H)$ - the derived category of $H$-equivariant holonomic $\calA$-modules on $Y$. It is not, in general, equivalent to the naive bounded derived category of $\calM (\calA , H)$.

We briefly recall the definition of $\calD (\calA , H)$. Consider the category of smooth $H$-resolutions $Res(Y)$; Namely, an object of $Res(Y)$ is a free $H$-variety $Z$ together with a smooth $H$-morphism $\pi : Z \to Y$. Given such a resolution, denoting by $q : Z \to H \backslash Z$ the quotient map, on $H \backslash Z$ we have the TDO $q_+ \pi^{\cdot}\calA$ associated to the $H$-equivariant TDO $\pi^{\cdot}\calA$ on $Z$. We associate to our resolution the derived category of holonomic $q_+ \pi^{\cdot}\calA$-modules on $H \backslash Z$, of which we think as an approximation to $\calD (\calA , H)$. This association is naturally a $2$-functor, and we take its $2$-limit. This is $\calD (\calA , H)$.

There is always a canonical $t$-exact functor $D^b (\calM (\calA , H)) \to \calD (\calA , H)$, identifying the hearts.

\begin{remark}
We considered holonomic $\calA$-modules just because this is the variant that we will consider below. One can define in the same way as above the derived category of $H$-equivariant $\calA$-modules.
\end{remark}

\section{Definition}

Consider again the diagram of chapter \ref{sec_func}: $$ \xymatrix{ K \times X \ar[r]^{\widetilde{a}} \ar[dd]_{p} & K \times X \ar[d]^{q} & \\ & K /M \times X \ar[d]^{\widetilde{p}} \\ X & X }$$ and recall the equivariant structures that we imposed there on the varieties in the diagram, as well as the TDO's (we take the TDO $D_{\lambda}$ on $X$ and on the other spaces we take its pullback along the vertical maps).

We wish to define a functor $$\bbB^{der} : \calD(D_{\lambda} , MN) \to \calD (D_{\lambda} , K)$$ such that $H^0 \bbB^{der} \is \bbB$.

\begin{definition}
The \textbf{derived geometric Bernstein functor} $$\bbB^{der} : \calD (D_{\lambda} , MN) \to \calD (D_{\lambda} , K)$$ is defined as the composition $$ \xymatrix{ & \calD (p^{\cdot} D_{\lambda},K \times M) \ar[r]^{\widetilde{a}_* } & \calD (p^{\cdot} D_{\lambda},K \times M) \ar[d]^{q_+} \\ & & \calD (\widetilde{p}^{\cdot} D_{\lambda},K) \ar[d]^{\widetilde{p}_*} \\ \calD (D_{\lambda},MN) \ar[r]^{for} & \calD (D_{\lambda},M) \ar[uu]^{p^{\circ}} & \calD (D_{\lambda},K) }$$
\end{definition}

In the above definition, the functor $for$ is the ($t$-exact) forgetful functor. The functor $p^{\circ}$ is the $t$-exact pullback (it is an induction equivalence). The functor $\widetilde{a}_*$ is the ($t$-exact) equivalence of "transport of structure" along the isomorphism $\widetilde{a}$. The functor $q_+$ is the ($t$-exact) "quotient equivalence". The functor $\widetilde{p}_*$ is the pushforward - it is right-$t$-exact.

The functor $\bbB^{der}$ is right-$t$-exact.

\begin{lemma}
	The functors $$H^0 \bbB^{der} , \bbB : \calM( D_{\lambda} , MN) \to \calM(D_{\lambda} , K)$$ are naturally isomorphic.
\end{lemma}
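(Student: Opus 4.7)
The plan is to exploit the observation that every functor appearing in the composition defining $\bbB^{der}$ is t-exact except for the final $\widetilde{p}_*$. The comparison thus reduces to a statement about the $0$-th cohomology of a single $D$-module pushforward.

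First I would note that, as remarked immediately after the definition, $for$, $p^{\circ}$, $\widetilde{a}_*$ and $q_+$ are all t-exact, so their composition
$$T := q_+ \circ \widetilde{a}_* \circ p^{\circ} \circ for : \calD(D_\lambda, MN) \to \calD(\widetilde{p}^{\cdot} D_\lambda, K)$$
carries $\calM(D_\lambda, MN)$ to $\calM(\widetilde{p}^{\cdot} D_\lambda, K)$. On the hearts, each of these four functors (forgetful, pullback along a smooth projection, transport of structure along an isomorphism, quotient equivalence for a free action) coincides tautologically with its abelian-categorical version appearing in chapter \ref{sec_func}; note here that by the lemma in section \ref{sec_rel}, $Rel^{MN}_M$ is just the forgetful functor, so the inputs to the two constructions agree. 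Hence for $V \in \calM(D_\lambda, MN)$, $T(V)$ is canonically identified with the object that feeds into the abelian-categorical $\widetilde{p}_*$ computing $\bbB(V)$.

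Next I would identify $H^0$ of the derived pushforward $\widetilde{p}_*$, restricted to objects of the heart, with the abelian $\widetilde{p}_*$ of chapter \ref{sec_func}. Since $K/M$ is affine by assumption, the projection $\widetilde{p}: K/M \times X \to X$ is affine, so higher $\calO$-direct images vanish and the derived $D$-module pushforward of $W \in \calM(\widetilde{p}^{\cdot} D_\lambda, K)$ is computed by applying $\widetilde{p}_*$ in the $\calO$-sense to the shifted relative de Rham complex $\Omega^{\bullet}_{K/M \times X \, / \, X} \otimes_\calO W$. This complex is concentrated in non-positive degrees, and its $0$-th cohomology is the quotient of $W$, twisted by $\wedge^{\mathrm{top}}(\frakk/\frakm)^*$ (reflecting the passage from left to right $D$-modules along $K/M$), by the action of the relative vector fields. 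That is exactly the coinvariants description of the abelian $\widetilde{p}_*$.

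Combining the two steps yields, functorially in $V \in \calM(D_\lambda, MN)$, a natural isomorphism
$$H^0 \bbB^{der}(V) \;\cong\; H^0\bigl(\widetilde{p}_* T(V)\bigr) \;\cong\; \bbB(V).$$
The only step requiring any real verification is the identification of $H^0$ of the derived de Rham pushforward along the smooth affine projection $\widetilde{p}$ with the coinvariants functor — standard, though the $K$-equivariant bookkeeping (via $H$-resolutions) needs to be spelled out carefully so that the resulting natural transformation is defined on all of $\calM(D_\lambda, MN)$, not merely pointwise.
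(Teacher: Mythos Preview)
Your proposal is correct and follows exactly the route the paper intends: the paper's own proof is the single sentence ``This is clear from inspecting the operations in the definition of both functors,'' and what you have written is precisely that inspection carried out in detail. Your decomposition into the t-exact part $T$ and the identification of $H^0\widetilde{p}_*$ with the abelian coinvariants functor (via the relative de Rham complex along the affine projection $K/M\times X\to X$) is the natural way to make that one-line proof precise.
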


\begin{proof}
	This is clear from inspecting the operations in the definition of both functors.
\end{proof}

\section{Commutation properties}

\begin{lemma}\label{lem_BerTrans}
Let $\calL_{\mu}$ be a $G$-equivariant line bundle on $X$. The following diagram $2$-commutes:
$$ \xymatrix{ \calD (D_{\lambda} , MN) \ar[r]^{\bbB^{der}} \ar[d]_{\calL_{\mu} \otimes_{\calO} \cdot} & \calD (D_{\lambda} , K) \ar[d]^{\calL_{\mu} \otimes_{\calO} \cdot} \\ \calD (D_{\lambda+\mu} , MN) \ar[r]_{\bbB^{der}} & \calD (D_{\lambda+\mu} , K)}$$
\end{lemma}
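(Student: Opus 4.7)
The plan is to decompose $\bbB^{der} = \widetilde{p}_* \circ q_+ \circ \widetilde{a}_* \circ p^\circ \circ for$ and exhibit a natural commutation of the twist functor $\calL_\mu \otimes_{\calO} \cdot$ with each constituent, then paste the resulting natural isomorphisms together. Since $\calL_\mu$ is $G$-equivariant, its pullbacks along the vertical maps of the diagram from chapter \ref{sec_func} are canonically defined $K$- (respectively $K \times M$-)equivariant line bundles on $K \times X$ and $K/M \times X$, and under the isomorphism $\widetilde{a}$ the two pullbacks to $K \times X$ are canonically identified (because $\widetilde{a}$ intertwines the two equivariant structures). Tensoring with these pulled-back line bundles converts modules over $p^{\cdot} D_\lambda$ into modules over $p^{\cdot} D_{\lambda+\mu}$, and similarly for the other spaces.

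For the functors $for$, $p^\circ$, $\widetilde{a}_*$, and $q_+$ the commutation with the respective twist is tautological. The functor $for$ is forgetful along a subgroup inclusion, $p^\circ$ is pullback along the smooth projection $p$ (an induction equivalence), $\widetilde{a}_*$ is transport of structure along the isomorphism $\widetilde{a}$, and $q_+$ is the quotient equivalence for the free $M$-action on $K \times X$. In each case the twist by the appropriate pullback of $\calL_\mu$ intertwines the operation, and this is essentially the content of lemma \ref{lem_AvTw} in the underived setting: the manipulations there are at the level of sheaves (or even of underlying equivariant structures) and pass unchanged to the derived category.

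The only step requiring genuine derived-category input is the pushforward $\widetilde{p}_*$. There we invoke the projection formula for $D$-module direct image along the smooth projection $\widetilde{p} : K/M \times X \to X$: for $\calF \in \calD(\widetilde{p}^{\cdot} D_\lambda, K)$ there is a natural isomorphism
$$\widetilde{p}_* \bigl( \calF \otimes_{\calO} \widetilde{p}^{*} \calL_\mu \bigr) \;\cong\; \widetilde{p}_*(\calF) \otimes_{\calO} \calL_\mu$$
in $\calD(D_{\lambda+\mu}, K)$. This is the main technical point and the only place where the fact that we are in the equivariant derived category (rather than the naive derived category of the heart) plays a role. The projection formula is standard for smooth morphisms, and it is compatible with the equivariant structures because $\widetilde{p}$ is $K$-equivariant and $\calL_\mu$ carries its $G$-equivariant structure.

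Assembling these natural isomorphisms through the composition defining $\bbB^{der}$ yields the desired $2$-commutation of the square. The hardest part is the bookkeeping of TDOs and equivariant structures under the identifications; once the projection formula is in hand, everything else is formal and parallels the proof of lemma \ref{lem_AvTw}.
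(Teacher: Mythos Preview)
Your proof is correct and takes essentially the same approach as the paper: the paper's own proof is the single line ``This follows from the projection formula,'' and you have simply unpacked this, noting that among the constituent functors of $\bbB^{der}$ the only one whose commutation with twist is not tautological is the pushforward $\widetilde{p}_*$, where the projection formula applies. Your write-up is more detailed than the paper's, but the substance is identical.
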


\begin{proof}
This follows from the projection formula.
\end{proof}

For the next commutation property, we recall the intertwining functors. Namely, fixing a closed subgroup $H \subset G$, for every $w \in W$ we have functors $I_{*,w} , I_{!,w} : \calD(D_{\lambda}, H) \to \calD(D_{w \lambda}, H)$. These are defined by considering the diagram $$ \xymatrix{& Z_w \ar[rd]^{p_2} \ar[ld]_{p_1} & \\ X & & X} $$ where $Z_w \subset X\times X$ is the subvariety of pairs of points in relative position $w$, and $p_1,p_2$ are the projections on the first and second factor. One then defines $$I_{*,w}(\calF) = (p_2)_* (\calL_w \otimes_{\calO} p_1^{\circ}(\calF)) \quad , \quad I_{!,w}(\calF) = (p_2)_! (\calL_w \otimes_{\calO} p_1^{\circ}(\calF)).$$ Here, $\calL_w$ is a suitable $G$-equivariant line bundle, fixing the discrepancy between the TDO's $p_1^{\cdot} D_{\lambda}$ and $p_2^{\cdot} D_{w \lambda}$.

One has the following properties of the intertwining functors: First of all, if $\lambda$ is antidominant and regular, $$I_{*,w} : \calD(D_{\lambda}) \to \calD(D_{w \lambda})$$ commutes with the derived global sections functors (here $H=1$, because we don't want to think about derived global sections in the equivariant setting). Second, the functor $$I_{!,w^{-1}} : \calD(D_{w \lambda}, H) \to \calD(D_{\lambda}, H)$$ is inverse to the functor $$I_{*,w} : \calD(D_{\lambda}, H) \to \calD(D_{w \lambda}, H).$$

\begin{lemma}\label{lem_BerInt}
For every $w \in W$, the following diagram $2$-commutes:
$$ \xymatrix{ \calD (D_{\lambda} , MN) \ar[r]^{\bbB^{der}} \ar[d]_{I_{*,w}} & \calD (D_{\lambda} , K) \ar[d]^{I_{*,w}} \\ \calD (D_{w \lambda} , MN) \ar[r]_{\bbB^{der}} & \calD (D_{w \lambda} , K)}$$
\end{lemma}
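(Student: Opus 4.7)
The plan is to decompose $\bbB^{der}$ into its elementary operations and to show that each one commutes with $I_{*,w}$. Recall that $\bbB^{der}$ is (modulo the initial forgetful functor) the composition $\widetilde{p}_* \circ q_+ \circ \widetilde{a}_* \circ p^\circ$ along the Bernstein diagram, whose intermediate spaces all have the form $Y \times X$ for $Y \in \{pt, K, K/M\}$; whereas $I_{*,w}$ is built from the intertwining correspondence $X \xleftarrow{p_1} Z_w \xrightarrow{p_2} X$ together with the twist by $\calL_w$, and involves only the $X$-direction.

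First I would form an enlargement of the Bernstein diagram by replacing every vertex $Y \times X$ with the correspondence $Y \times X \xleftarrow{1 \times p_1} Y \times Z_w \xrightarrow{1 \times p_2} Y \times X$ and every arrow by its three parallel copies. Since $Z_w \subset X \times X$ is $G$-stable (the diagonal $G$-action preserves relative positions), the non-trivial map $\widetilde{a}(k,x) = (k,kx)$ lifts canonically to the automorphism $(k, (x_1, x_2)) \mapsto (k, (kx_1, kx_2))$ of $K \times Z_w$, so the enlargement is unambiguous; moreover, the pullbacks of $\calL_w$ along the resulting parallel arrows match by the $G$-equivariance of $\calL_w$. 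All squares built from a Bernstein-direction arrow (in the group factor only) and an intertwining-direction arrow ($1 \times p_i$) are Cartesian.

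Next, for each elementary operation in $\bbB^{der}$ I would verify its commutation with the enlarged intertwining functor. The operations $for$, $p^\circ$, $q_+$ and $\widetilde{p}_*$ are pullbacks or pushforwards along maps purely in the group factor, so their commutation with the $X$-direction operations follows from functoriality of pullback and from smooth base change along the Cartesian squares above. The operation $\widetilde{a}_*$ commutes with the enlarged intertwining via the lift described above together with $G$-equivariance of $\calL_w$. Tensoring with $\calL_w$ passes through pullbacks tautologically and through pushforwards by the projection formula (fibrewise counterpart of lemma \ref{lem_BerTrans}). Composing these natural isomorphisms in the order dictated by the definition of $\bbB^{der}$ produces the desired $2$-commutation.

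The main obstacle is making the argument rigorous in the equivariant derived category of \cite{BL}, where objects are represented not by single complexes but by compatible families on free smooth resolutions. To handle this I would choose resolutions of the $MN$-, $M$- and $K$-actions compatible with the intertwining enlargement (possible because the two directions are essentially independent), lift each base-change isomorphism to the resolution level, and descend. No $t$-structure issues arise, since each elementary functor is either $t$-exact or, in the case of $\widetilde{p}_*$, has matching cohomological amplitude on both sides of the square, allowing the natural transformations to be assembled coherently.
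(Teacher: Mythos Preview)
Your argument is correct and is exactly the approach the paper intends: the paper's proof is the single sentence ``This follows from smooth base change, and the projection formula,'' and what you have written is a careful unpacking of precisely those two ingredients across the Bernstein diagram. Your enlarged diagram with $Y\times Z_w$ and the observation that the Bernstein-direction arrows (in the group factor) and the intertwining-direction arrows ($1\times p_i$) fit into Cartesian squares is the right way to organize the smooth base change, and the passage of $\calL_w$ through $\widetilde{p}_*$ via the projection formula is the same mechanism as in lemma \ref{lem_BerTrans}; there is nothing to add.
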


\begin{proof}
This follows from smooth base change, and the projection formula.
\end{proof}

\section{The category $\calD (D_{\lambda} , MN)$}\label{sec_MN}

We will use, regarding the category $\calD (D_{\lambda} , MN)$, the definitions and statements of appendix \ref{app_str}.

The Weyl group $W$ has the Bruhat order, and we call a subset $I \subset W$ closed, if $w\in I$ and $v \leq w$ imply $v \in I$. Closed subsets of $W$ are in bijection with $MN$-invariant closed subvarieties of $X$ - associating to $I \subset W$ the subvariety $X_I = \cup_{w \in I} \calO_w$. We associate to a closed subset $I \subset W$ the Serre subcategory of $\calM (D_{\lambda}, MN)$, which consists of modules whose support lies in $X_I$.

\begin{claim}
	The above association defines an affine simple $W$-stratification on $\calM (D_{\lambda}, MN)$, and $D^b (\calM (D_{\lambda}, MN)) \to \calD (D_{\lambda}, MN)$ is an equivalence.
\end{claim}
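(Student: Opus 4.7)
My plan is to verify, stratum by stratum, the axioms of an affine simple $W$-stratification from appendix \ref{app_str}, and then to conclude the derived equivalence by a recollement-induction on the number of strata.

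First I would confirm the claimed bijection. Since $wTw^{-1} = T$ for every $w \in W$ and $M \subset T$, we have $M \subset wBw^{-1} = \mathrm{Stab}_G(wx_0)$, so $M$ fixes each point $wx_0$. Consequently the $MN$-orbits on $X$ coincide with the $N$-orbits, namely the classical Bruhat cells $\calO_w = N \cdot wx_0 \is \bbA^{\ell(w)}$, whose closure order is the Bruhat order on $W$. This gives the claimed bijection $I \mapsto X_I$ between Bruhat-closed subsets of $W$ and $MN$-invariant closed subvarieties of $X$. Next I would verify affineness of each $i_w : \calO_w \hookrightarrow X$: it factors as the open embedding $\calO_w \hookrightarrow \overline{\calO_w}$ into its Schubert variety closure, followed by the closed (hence affine) embedding $\overline{\calO_w} \hookrightarrow X$; the first embedding is affine by the classical fact that Schubert cells are principal affine opens in their closures (cut out by the non-vanishing of explicit sections of ample line bundles).

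For the "simple stratum" axiom, I would invoke the equivalence $\calM(i_w^\cdot D_\lambda , MN) \is \calM^{\lambda}(\frakt , M)$ recorded in the Notations. Once $\frakt$ is required to act by $\lambda - \rho$, the right-hand side is the category of finite-dimensional representations of the finite group $M$, which in characteristic zero is semisimple with finitely many simple objects, indexed by characters of $M$. Thus each stratum supports a semisimple equivariant $D_\lambda$-module category with finitely many simples.

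Finally, for the derived equivalence $D^b(\calM(D_\lambda , MN)) \to \calD(D_\lambda , MN)$, I would argue by induction on the number of strata using the recollement attached to a maximal open orbit in $X_I$ and its closed complement. The base case of a single stratum reduces to $D^b(\calM^\lambda(\frakt , M)) \is \calD(i_w^\cdot D_\lambda , MN)$; since $M$ is finite with semisimple representation category, the Bernstein-Lunts equivariant derived category on this stratum collapses to the bounded derived category of its heart (cf.\ \cite{BL}). The inductive step uses affineness of each $i_w$ to keep the relevant recollement functors $t$-exact in the directions required, so that the heart is preserved and the equivalence extends to $X_I$. The main technical obstacle I foresee lies in the bookkeeping within the Bernstein-Lunts equivariant derived category for the recollement step, but no fundamentally new ideas beyond the affine simple stratification formalism of appendix \ref{app_str} should be necessary.
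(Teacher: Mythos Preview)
Your outline is in the right spirit --- the paper's own proof is just one sentence (``This is standard, using the criteria of appendix \ref{app_str}'') --- but you have one genuine omission and one imprecision worth flagging.

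The omission: the definition of a \emph{simple} $W$-stratification in appendix \ref{app_str} has two clauses, and you only address the first (that each $\calP^w$ is smallish and semisimple). You never verify clause (2), the vanishing $Ext^2_{\calM(D_\lambda,MN)}((i_v)_! E,(i_w)_* F)=0$. The appendix supplies exactly the tool you want: by part (1) of the final claim there, it suffices to check $Hom_{\calD(D_\lambda,MN)}((i_v)_! E,(i_w)_* F[2])=0$, and in fact one checks the stronger statement $Hom_{\calD}((i_v)_! E,(i_w)_* F[j])=0$ for all $j>0$, by adjunction and base change reducing to a single orbit. This same computation is precisely the hypothesis of part (2) of that claim, which then yields the derived equivalence $D^b(\calM(D_\lambda,MN))\simeq\calD(D_\lambda,MN)$ directly. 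So the appendix's route handles the $Ext^2$ condition and the derived equivalence in one stroke; your recollement-induction is a legitimate alternative for the equivalence, but it leaves the ``simple'' axiom unfinished unless you loop back and extract $Ext^2$ vanishing from it afterwards.

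The imprecision: in your base case you write that the collapse $\calD(i_w^\cdot D_\lambda,MN)\simeq D^b(\calM^\lambda(\frakt,M))$ holds ``since $M$ is finite''. But the stabilizer of $wx_0$ in $MN$ is not $M$; it is $M$ times the unipotent group $N\cap wNw^{-1}$, which is positive-dimensional for $w\neq w_0$. The reason the Bernstein--Lunts category on the orbit still agrees with the naive derived category is that this unipotent factor contributes no higher equivariant cohomology (unipotent groups being contractible in the relevant sense), together with $M$ being finite in characteristic zero. You should say this explicitly; ``$M$ is finite'' alone does not justify the step.
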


\begin{proof}
	This is standard, using the criteria of appendix \ref{app_str}.
\end{proof}

The above claim makes $\calM (D_{\lambda}, MN)$ very comfortable to work with; It has enough projectives and injectives, is of finite cohomological dimension, has the formalism of standard and costandard objects, etc.; See appendix \ref{app_str}.

\section{An acyclicity property}

\begin{proposition}\label{prop_BDerTilt}
	The functor $$ \bbB^{der} : \calD(D_{\lambda} , MN) \to \calD(D_{\lambda} , K)$$ sends standard-filtered objects to acyclic objects.
\end{proposition}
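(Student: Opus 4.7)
The plan is, by a devissage argument using the long exact sequence of cohomology applied to a filtration with standard subquotients and inducting on filtration length, to reduce the claim to showing that $\bbB^{der}((i_w)_!\calE)$ is acyclic for each single standard object $(i_w)_!\calE$, with $w \in W$ and $\calE$ an irreducible object of $\calM(i_w^{\cdot}D_\lambda, MN)$.

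For a single standard, unpack the defining composition $\bbB^{der} = \widetilde{p}_* \circ q_+ \circ \widetilde{a}_* \circ p^\circ \circ \mathrm{for}$. All of $\mathrm{for}$, $p^\circ$, $\widetilde{a}_*$, $q_+$ are $t$-exact (being forgetful or equivalences of one kind or another), so it suffices to show that the final $D$-module pushforward $\widetilde{p}_*$, applied to the explicit heart object $\calG := q_+\widetilde{a}_* p^\circ \mathrm{for}((i_w)_!\calE)$ on $K/M \times X$, is concentrated in cohomological degree $0$.

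The geometric reformulation uses the $K$-equivariant isomorphism $K/M \times X \cong K \times_M X$ sending $(kM, y) \leftrightarrow [k, k^{-1}y]$, under which $\widetilde{p}$ corresponds to the action map $\mu: K \times_M X \to X$, $[k, y] \mapsto ky$. Tracing through the definitions identifies $\calG$ with $(\tilde{i}_w)_!\widetilde{\calE}$, where $\tilde{i}_w: K \times_M \calO_w \hookrightarrow K \times_M X$ is the induced locally closed embedding and $\widetilde{\calE}$ is the $K$-equivariant $D_\lambda$-module obtained from $\calE$ by induction from $M$ to $K$. Thus $\bbB^{der}((i_w)_!\calE)$ identifies with the $D$-module pushforward of $\widetilde{\calE}$ along the composition $\mu_w := \mu \circ \tilde{i}_w: K \times_M \calO_w \to X$.

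The main obstacle is then establishing the acyclicity of this pushforward. Helpful facts: $K \times_M \calO_w$ is affine (as $K$ and $\calO_w$ are affine and $M$ is finite), so $\mu_w$ is an affine morphism into the separated variety $X$; and the image $K \cdot \calO_w$ is contained in a finite union of $K$-orbits, each of which is affine in $X$ by hypothesis and therefore affinely embedded (locally closed and affine into separated ambient), so the $D$-module pushforward from each $K$-orbit to $X$ is $t$-exact (via Kashiwara's theorem for the closed part together with exactness of $j_+$ for affine open embeddings). The remaining step, which is the main technical difficulty, is to analyze the factor $K \times_M \calO_w \to K\cdot\calO_w$ and use the $K$-equivariance of $\widetilde{\calE}$ together with the $K$-homogeneous structure of the target orbits to reduce the vanishing of higher cohomology to a fiberwise Lie-algebra-cohomology calculation involving the stabilizers $H_w := \mathrm{Stab}_K(wx_0)$.
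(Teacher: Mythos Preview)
Your reduction to a single standard object and your geometric reformulation of $\bbB^{der}((i_w)_!\calE)$ as the $D$-module pushforward along $\mu_w: K \times_M \calO_w \to X$ are correct. However, at the decisive step you stop: you name ``the main technical difficulty'' as analyzing $K \times_M \calO_w \to K\cdot\calO_w$ via a fiberwise Lie-algebra-cohomology calculation with the stabilizers $H_w$, but you do not carry it out. This is not a routine detail. For general $w$, the image $K\cdot\calO_w$ is typically a union of several $K$-orbits (often all of $X$), the fibers of $\mu_w$ jump across these orbits, and there is no a priori reason the relevant stabilizer cohomology vanishes; affineness of $\mu_w$ alone does not give $t$-exactness of $D$-module pushforward. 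So as written the argument has a genuine gap, and the proposed route does not obviously close.

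The paper avoids this difficulty by a completely different device: intertwining functors. After twisting by a $G$-equivariant line bundle to make $\lambda$ regular antidominant (Lemma~\ref{lem_BerTrans}), acyclicity can be tested on derived global sections (Beilinson--Bernstein), and these are invariant under $I_{*,w^{-1}}$. Since $\bbB^{der}$ commutes with intertwining functors (Lemma~\ref{lem_BerInt}) and $I_{*,w^{-1}} (i_w)_! [E]_w^{\lambda} \cong (i_1)_! [E]_1^{w^{-1}\lambda}$, the whole problem collapses to the case $w=1$. There your geometric picture becomes trivial: $\calO_1 = \{x_0\}$, so $K \times_M \calO_1 = K/M$ maps \emph{isomorphically} onto the open $K$-orbit $U$, giving $\bbB^{der}((i_1)_! [E]_1^{w^{-1}\lambda}) \cong j_* [E]^{w^{-1}\lambda}$, whose derived global sections are acyclic since $U$ is affine. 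The intertwining-functor reduction is exactly what replaces the fiberwise analysis you were heading toward.
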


\begin{proof}
	
	We reduce first to the case when $\lambda$ is antidominant and regular. This is done by choosing a $G$-equivariant line bundle $\calL_{\mu}$ such that $\lambda + \mu$ is antidominant and regular, and using lemma \ref{lem_BerTrans} (notice that tensoring with $\calL_\mu$ is a $t$-exact equivalence, which also preserves the $W$-stratification structure on the $MN$-equivariant side).
	
	So, let us assume that $\lambda$ is antidominant and regular. It is enough to check that $\bbB^{der}$ sends a standard object to an acyclic object, so fix a standard object $(i_w)_! [E]_w^{\lambda}$. To check that $\bbB^{der} ((i_w)_! [E]_w^{\lambda})$ is acyclic, it is enough to do so when forgetting the $K$-equivariancy. Furthermore, since $\lambda$ is antidominant and regular, it is enough to check that the derived global sections of $\bbB^{der} ((i_w)_! [E]_w^{\lambda})$ are acyclic. Those are isomorphic to the derived global sections of $I_{*,w^{-1}} \bbB^{der} ((i_w)_! [E]_w^{\lambda})$. We have: $$ I_{*,w^{-1}} \bbB^{der} ((i_w)_! [E]_w^{\lambda}) \is \bbB^{der} (I_{*,w^{-1}} (i_w)_! [E]_w^{\lambda}) \is \bbB^{der} ((i_1)_! [E]_1^{w^{-1} \lambda}) \is j_* [E]^{w^{-1} \lambda}.$$
	
	Here, the first isomorphism is by lemma \ref{lem_BerInt}. For the second one, notice that $I_{!,w} (i_1)_! [E]_1^{w^{-1} \lambda} \is (i_w)_! [E]_w^{\lambda}$ (this immediately follows from the geometry in the definition of $I_{!,w}$), and recall that $I_{*,w^{-1}}$ is inverse to $I_{!,w}$, so that $ I_{*,w^{-1}} (i_w)_! [E]_w^{\lambda} \is [E]_1^{w^{-1} \lambda}$. For the third isomorphism notice that $\bbB^{der} ([E]_1^{w^{-1} \lambda}) \is j_* ([E]^{w^{-1} \lambda})$ - this calculation again follows immediately from the geometry in the definition of $\bbB^{der}$.
	
	So, we are interested to show that the derived global sections of $j_* ([E]^{w^{-1} \lambda})$ are acyclic. This is the case because $U$ is affine, and hence $j: U \to X$ is an affine morphism.
	
\end{proof}

%%%%%%%%%%%%%%%%%%%%%%%%%%%%%%%%%%%%%
%%%%%%%%%%%%%%%%%%%%%%%%%%%%%%%%%%%%%
%The functor $\bbC$ and its property
%%%%%%%%%%%%%%%%%%%%%%%%%%%%%%%%%%%%%
%%%%%%%%%%%%%%%%%%%%%%%%%%%%%%%%%%%%%

\pagebreak
\chapter{The functor $\bbC$ and its property}\label{chp_bbC}

In this chapter we show that $\bbB$ admits a right adjoint $\bbC$, and show its basic property - that it sends injective objects to costandard-filtered objects.

\section{The functor $\bbC$}

\begin{claim}\label{clm_bbC}
	The functor $$\bbB : \calM(D_{\lambda} , MN) \to \calM(D_{\lambda} , K)$$ admits an exact right adjoint $$ \calM(D_{\lambda} , MN) \leftarrow \calM(D_{\lambda} , K) : \bbC.$$
\end{claim}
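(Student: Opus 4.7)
The strategy is to first treat the case of regular antidominant $\lambda$, where $\bbC$ can be obtained by transporting $\calC$ via the localization-globalization equivalence, and then to bootstrap to arbitrary $\lambda$ by twisting with a $G$-equivariant line bundle.

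For $\lambda$ regular antidominant, I use that $\Lambda$ and $\Gamma$ are mutually inverse exact equivalences between $\calM^{[\lambda]}(\frakg, H)$ and $\calM(D_{\lambda}, H)$ for $H \in \{K, MN\}$. By lemma \ref{lem_Bloc}, $\bbB$ is intertwined with $\calB$ under these equivalences, so I define $\bbC := \Lambda \circ \calC \circ \Gamma$; the adjunction $\calB \dashv \calC$ from chapter \ref{chp_CJB} then transports to $\bbB \dashv \bbC$, and exactness follows from exactness of $\Lambda, \Gamma$ (as equivalences) and of $\calC$ (by theorem \ref{thm_Cex}).

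For arbitrary $\lambda \in \wt$, I would pick an integral weight $\mu \in \intwt$ such that $\lambda + \mu$ is regular antidominant; such a $\mu$ always exists because translation by the integral lattice can move any weight into the regular antidominant cone. By lemma \ref{lem_Btrans}, the exact equivalence $\calL_\mu \otimes_{\calO} (\cdot): \calM(D_\lambda, H) \to \calM(D_{\lambda+\mu}, H)$ conjugates $\bbB$ at level $\lambda$ with $\bbB$ at level $\lambda + \mu$. Since the latter has an exact right adjoint $\bbC^{\lambda+\mu}$ by the previous step, transporting right adjoints along this equivalence produces an exact right adjoint for $\bbB$ at level $\lambda$, given by $\bbC(\calG) := \calL_{-\mu} \otimes_{\calO} \bbC^{\lambda+\mu}(\calL_\mu \otimes_{\calO} \calG)$; independence of the choice of $\mu$ is automatic from uniqueness of right adjoints.

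The main substantive input is theorem \ref{thm_Cex}, the exactness of $\calC$, which rests on Casselman's Artin--Rees-type argument recalled in section \ref{sec_Jex}. Everything else in the plan is formal manipulation with localization, globalization, and line bundle twists. The delicate point to verify is that the natural isomorphisms of lemma \ref{lem_Bloc} and lemma \ref{lem_Btrans} are compatible with the adjunctions, so that the transported $\bbC$ is genuinely right adjoint to $\bbB$ at level $\lambda$---this is a routine but careful diagram chase, and it is the step I would spend the most care on.
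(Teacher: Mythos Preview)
Your proof is correct and follows essentially the same approach as the paper: first reduce to the regular antidominant case via the localization--globalization equivalence (lemma~\ref{lem_Bloc}) and the exactness of $\calC$ (theorem~\ref{thm_Cex}), then handle general $\lambda$ by twisting with a $G$-equivariant line bundle $\calL_\mu$ (lemma~\ref{lem_Btrans}). The paper's proof is terser but structurally identical; your explicit formula $\bbC = \Lambda \circ \calC \circ \Gamma$ in the regular antidominant case and the observation about compatibility of adjunctions under natural isomorphism are just unpacking what the paper leaves implicit.
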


\begin{proof}
	First assume that $\lambda$ is antidominant and regular. Then in the $2$-commutative diagram $$\xymatrix{\calM(D_{\lambda}, MN) \ar[r]^{\bbB} & \calM(D_{\lambda}, K) \\ \calM^{[\lambda]}(\frakg , MN) \ar[r]_{\calB} \ar[u]^{\Lambda} & \calM^{[\lambda]}(\frakg , K) \ar[u]_{\Lambda}}$$ the vertical functors are equivalences. Hence, $\bbB$ admits an exact right adjoint, since $\calB$ does.
	
	In the case of a general $\lambda$, choose a $G$-equivariant line bundle $\calL_{\mu}$ such that $\lambda + \mu$ is antidominant and regular. Then the $2$-commutative diagram $$ \xymatrix{ \calM (D_{\lambda} , MN) \ar[r]^{\bbB} \ar[d]_{\calL_{\mu} \otimes_{\calO} \cdot} & \calM (D_{\lambda} , K) \ar[d]^{\calL_{\mu} \otimes_{\calO} \cdot} \\ \calM (D_{\lambda + \mu} , MN) \ar[r]_{\bbB} & \calM (D_{\lambda + \mu} , K)}$$ shows that $\bbB$ between the weight $\lambda$ categories admits an exact right adjoint, since $\bbB$ between the weight $\lambda + \mu$ categories admits an exact right adjoint (as the vertical functors are equivalences).
\end{proof}

\begin{lemma}\label{prop_CasGamma}\
The following diagram $2$-commutes:
$$\xymatrix{\calM(D_{\lambda}, MN) \ar[d]_{\Gamma} & \calM(D_{\lambda}, K) \ar[l]_{\bbC} \ar[d]^{\Gamma} \\ \calM^{[\lambda]}(\frakg , MN) & \calM^{[\lambda]}(\frakg , K) \ar[l]^{\calC}}$$
\end{lemma}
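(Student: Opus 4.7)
The plan is to establish the $2$-commutativity purely by a uniqueness-of-adjoints argument, piggy-backing on the already-proved lemma \ref{lem_Bloc} for the left adjoints.

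Both functors $\Gamma \circ \bbC$ and $\calC \circ \Gamma$ go from $\calM(D_{\lambda}, K)$ to $\calM^{[\lambda]}(\frakg, MN)$. I will identify each of them as a right adjoint of the same functor, and then invoke the uniqueness of right adjoints. First I would note that the composition $\Lambda \circ \calB : \calM^{[\lambda]}(\frakg, MN) \to \calM(D_{\lambda}, K)$ is a composition of left adjoints, hence has right adjoint $\calC \circ \Gamma$, by the adjunctions $(\calB, \calC)$ and $(\Lambda, \Gamma)$ available between the relevant equivariant categories. Similarly, $\bbB \circ \Lambda : \calM^{[\lambda]}(\frakg, MN) \to \calM(D_{\lambda}, K)$ is a composition of left adjoints, hence has right adjoint $\Gamma \circ \bbC$, using now the adjunction $(\bbB, \bbC)$ from claim \ref{clm_bbC}.

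Next I would apply lemma \ref{lem_Bloc}, which provides a natural isomorphism $\Lambda \circ \calB \cong \bbB \circ \Lambda$. Since naturally isomorphic functors have naturally isomorphic right adjoints, this yields a canonical isomorphism $\calC \circ \Gamma \cong \Gamma \circ \bbC$, which is exactly the desired $2$-commutativity of the square.

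The main step worth scrutinizing is that all the ingredient adjunctions exist where claimed: the adjunction $(\calB, \calC)$ on the Harish-Chandra level restricts to the fixed infinitesimal character subcategories (this is fine because $\calB$ and $\calC$ both preserve infinitesimal character, as recorded in the earlier claims), and the localization-globalization adjunction $(\Lambda, \Gamma)$ is likewise available after restricting to $\calM^{[\lambda]}(\frakg, \cdot)$. Once these adjunctions are assembled, there is essentially no calculation: the whole argument is the abstract nonsense of taking adjoints of the relation in lemma \ref{lem_Bloc}. I do not expect any real obstacle beyond being careful that the required adjoints on the $D$-module side indeed exist, which is the content of claim \ref{clm_bbC} already proved.
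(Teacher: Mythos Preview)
The proposal is correct and is exactly the paper's approach: the paper's proof consists of the single sentence that the result follows from the commutation of $\bbB$ with $\Lambda$ (lemma \ref{lem_Bloc}) by taking adjoints, which is precisely the uniqueness-of-adjoints argument you spell out.
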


\begin{proof}
This immediately follows from the commutation of $\bbB$ with $\Lambda$, by taking adjoints.
\end{proof}

\section{Acycilicity properties}

Since $\calM (D_{\lambda} , MN)$ admits enough projective objects and is of finite cohomological dimension, we can construct the left derived functor of $\bbB$: $$ L\bbB : D^b (\calM(D_{\lambda}, MN)) \to D^b (\calM(D_{\lambda}, K)).$$

For the notions in the next lemma and its proof, see definition \ref{def_BigEnough} and the remark that follows it.

\begin{lemma}\label{lem_StAdapt}
The class of standard-filtered objects in $\calM(D_{\lambda}, MN)$ is big enough and adapted to $\bbB$.
\end{lemma}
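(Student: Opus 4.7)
The plan is to verify two conditions: (i) the \emph{big enough} condition, that every object of $\calM(D_{\lambda},MN)$ admits a (left) resolution by standard-filtered objects, and (ii) the \emph{adapted} condition, that $\bbB$ applied to any exact complex of standard-filtered objects is again exact. Given the general formalism recalled in appendix \ref{app_str} (for (i)) and proposition \ref{prop_BDerTilt} together with the triangulated structure of $\bbB^{der}$ (for (ii)), each condition should reduce to a short routine check.

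For (i), I would invoke the previous claim of section \ref{sec_MN}, by which $\calM(D_{\lambda},MN)$ admits an affine simple $W$-stratification. In this setting, appendix \ref{app_str} supplies enough projective objects, and each projective object is standard-filtered (this is one of the standard consequences of the formalism: projectives are filtered by standards, analogous to the quasi-hereditary case). Hence any object admits a projective resolution, which in particular is a standard-filtered resolution.

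For (ii), I would first treat short exact sequences $0\to F'\to F\to F''\to 0$ of standard-filtered objects. Such a sequence yields a distinguished triangle in $\calD(D_{\lambda},MN)$; applying the triangulated functor $\bbB^{der}$, and using that by proposition \ref{prop_BDerTilt} each of $\bbB^{der}(F'),\bbB^{der}(F),\bbB^{der}(F'')$ is acyclic, one obtains a distinguished triangle in $\calD(D_{\lambda},K)$ whose three vertices lie in the heart. Standard $t$-structure considerations then turn it into a short exact sequence in $\calM(D_{\lambda},K)$, and the isomorphism $H^0\bbB^{der}\cong\bbB$ identifies it with $0\to\bbB(F')\to\bbB(F)\to\bbB(F'')\to 0$. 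Hence $\bbB$ is exact on short exact sequences of standard-filtered objects. For a general exact complex $F^{\bullet}$ of standard-filtered objects, the cleanest argument is: $F^{\bullet}$ is isomorphic to $0$ in $\calD(D_{\lambda},MN)$, so $\bbB^{der}(F^{\bullet})\cong 0$ in $\calD(D_{\lambda},K)$; but since each $F^i$ is standard-filtered, $\bbB^{der}$ applied termwise coincides with $\bbB$ applied termwise, so $\bbB(F^{\bullet})$ is also zero in $\calD(D_{\lambda},K)$, i.e.\ exact.

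I do not expect a serious obstacle. The one point requiring care is the compatibility between $\bbB^{der}$ (computed in the Bernstein--Lunts equivariant derived category) and $\bbB$ on standard-filtered objects, which is already guaranteed by proposition \ref{prop_BDerTilt} combined with the identification $H^0\bbB^{der}\cong\bbB$; if anything is delicate it is the passage from short exact sequences to long exact complexes, and that is handled by the derived-category argument of the previous paragraph rather than by iterated splicing (which would require the kernels to be standard-filtered, a property that need not hold).
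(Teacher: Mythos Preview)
Your proof is correct and follows essentially the same route as the paper: big enough comes from the general stratification formalism (the paper just cites lemma \ref{lem_BigEnough}), and adaptedness is deduced from proposition \ref{prop_BDerTilt} by passing the short exact sequence through $\bbB^{der}$ and reading off that the resulting triangle has all vertices in the heart (the paper phrases this via the long exact cohomology sequence and $H^1\bbB^{der}\calF_1=0$, which is the same argument).

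Two minor remarks. First, the discussion of general exact complexes is unnecessary: by the definition used here (the remark after definition \ref{def_BigEnough}), adaptedness only asks that short exact sequences of standard-filtered objects go to short exact sequences. Second, your parenthetical that iterated splicing ``would require the kernels to be standard-filtered, a property that need not hold'' is actually mistaken: condition (2) of big enough says precisely that in a short exact sequence $0\to A\to B\to C\to 0$ with $B,C$ standard-filtered, $A$ is standard-filtered, so for a bounded exact complex one can splice from the right and the kernels are standard-filtered at every step. This does not affect the validity of your argument, since the short-exact-sequence case already suffices.
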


\begin{proof}
The class of standard-filtered objects in $\calM (D_{\lambda} , MN)$ is big enough by lemma  \ref{lem_BigEnough}. Thus, it is enough to see that this class is adapted to $\bbB = H^0 \bbB^{der}$. Thus, let $$0 \to \calF_1 \to \calF_2 \to \calF_3 \to 0$$ be a short exact sequence of standard-filtered objects in $\calM (D_{\lambda} , MN)$. Applying the functor $\bbB^{der}$ and taking cohomology, we obtain the exact sequence $$ 0 \to H^0 \bbB^{der} \calF_1 \to H^0 \bbB^{der} \calF_2 \to H^0 \bbB^{der} \calF_3 \to H^1 \bbB^{der} \calF_1.$$ By proposition \ref{prop_BDerTilt}, $H^1 \bbB^{der} \calF_1 = 0$. Hence, the exact sequence becomes $$ 0 \to H^0 \bbB^{der} \calF_1 \to H^0 \bbB^{der} \calF_2 \to H^0 \bbB^{der} \calF_3 \to 0$$ and indeed the class of standard-filtered objects is adapted to $\bbB$.
\end{proof}

\begin{proposition}\label{prop_LBtilt}
The functor $$ L\bbB : D^b(\calM(D_{\lambda} , MN)) \to D^b(\calM(D_{\lambda} , K))$$ sends standard-filtered objects to acyclic objects.
\end{proposition}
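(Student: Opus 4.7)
The plan is to derive the proposition directly from Lemma \ref{lem_StAdapt} via the standard adapted-resolution formalism for left derived functors. That lemma shows the class $\mathcal{S}$ of standard-filtered objects in $\calM(D_\lambda, MN)$ is big enough and adapted to $\bbB$. This is precisely the hypothesis needed to compute $L\bbB$ by resolving any object by members of $\mathcal{S}$: every object admits a left resolution with entries in $\mathcal{S}$, and $\bbB$ preserves exactness on such resolutions, so $L\bbB$ on any complex may be computed by replacing each entry by an $\mathcal{S}$-resolution and applying $\bbB$ termwise.

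Given this, the proof is essentially immediate. Let $\calF$ be standard-filtered. Viewed as a one-term complex concentrated in cohomological degree $0$, $\calF$ itself already belongs to $\mathcal{S}$ and is therefore a legitimate $\mathcal{S}$-resolution of itself. Applying $\bbB$ yields the one-term complex $\bbB\calF$ concentrated in degree $0$, so $L\bbB \calF$ is represented by $\bbB \calF$ in degree $0$. This is exactly the statement that $L\bbB \calF$ is acyclic.

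There is no genuine obstacle at this stage: the substantive content was already performed in Proposition \ref{prop_BDerTilt}, where acyclicity of $\bbB^{der}$ on standard-filtered objects was obtained by reduction to regular antidominant $\lambda$ (via Lemma \ref{lem_BerTrans}), use of the intertwining functors $I_{*,w}$ and $I_{!,w^{-1}}$ (via Lemma \ref{lem_BerInt}) to reduce to the orbit of the identity, and finally affineness of the open orbit $U$. That geometric input is transported to the abelian level of $\bbB$ in Lemma \ref{lem_StAdapt}, and from there to the present statement by pure homological algebra.
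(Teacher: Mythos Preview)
Your proof is correct and follows the same approach as the paper: both derive the proposition immediately from Lemma \ref{lem_StAdapt} via the standard fact that a left derived functor applied to an object in a big-enough adapted class is computed by applying the functor directly. Your additional paragraph recalling the geometric content behind Proposition \ref{prop_BDerTilt} is accurate but not needed for the argument itself.
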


\begin{proof}
This follows immediately from lemma \ref{lem_StAdapt}, since left derived functors can be computed term-wise on complexes of objects in a big enough adapted class.
\end{proof}

Since the left-exact functor $\bbC$ is in fact exact, its right-derived functor $$ D^b(\calM(D_{\lambda}, MN)) \leftarrow D^b(\calM(D_{\lambda}, K)): R\bbC$$ is constructed trivially (applying the functor to complexes term-wise). The functor $L\bbB$ is left adjoint to the functor $R\bbC$.

\begin{proposition}\label{prop_Ctilt}
The functor $$ \calM(D_{\lambda} , MN) \leftarrow \calM(D_{\lambda} , K) : \bbC$$ sends injective objects to costandard-filtered objects.
\end{proposition}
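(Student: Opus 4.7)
The plan is to exploit the derived adjunction $L\bbB \dashv R\bbC$ together with the acyclicity result just proved, and to use the standard characterization of costandard-filtered objects via vanishing of higher Ext from standard objects (which, being part of the formalism of stratified categories, should be one of the equivalent conditions recorded in appendix \ref{app_str}).

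First I would fix an injective object $\calI \in \calM(D_\lambda, K)$ and, for each $w \in W$ and each standard object $\Delta_w \in \calM(D_\lambda, MN)$, compute
\[
\mathrm{Ext}^i_{\calM(D_\lambda, MN)}(\Delta_w, \bbC(\calI)) \;=\; \mathrm{Hom}_{D^b(\calM(D_\lambda, MN))}(\Delta_w, R\bbC(\calI)[i]).
\]
Here I use that $\bbC$ is exact (claim \ref{clm_bbC}), so $R\bbC(\calI) \simeq \bbC(\calI)$ sits in degree $0$, and that $D^b(\calM(D_\lambda, MN))$ really is the right ambient derived category thanks to the equivalence $D^b(\calM(D_\lambda, MN)) \simeq \calD(D_\lambda, MN)$ recorded in section \ref{sec_MN}.

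Next I would invoke the adjunction $L\bbB \dashv R\bbC$ to rewrite this as
\[
\mathrm{Hom}_{D^b(\calM(D_\lambda, K))}(L\bbB(\Delta_w), \calI[i]).
\]
By proposition \ref{prop_LBtilt}, since $\Delta_w$ is standard-filtered (it is a single standard object), $L\bbB(\Delta_w)$ is acyclic, i.e. concentrated in cohomological degree $0$, and so is canonically isomorphic to $\bbB(\Delta_w)$. Because $\calI$ is injective in the abelian category $\calM(D_\lambda, K)$, the above $\mathrm{Hom}$-group equals $\mathrm{Hom}(\bbB(\Delta_w), \calI)$ for $i=0$ and vanishes for $i > 0$. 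Therefore $\mathrm{Ext}^i(\Delta_w, \bbC(\calI)) = 0$ for all $i > 0$ and all $w \in W$.

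Finally, I would appeal to the characterization of costandard-filtered objects in a stratified abelian category (appendix \ref{app_str}) as precisely those objects $\calG$ for which $\mathrm{Ext}^{>0}(\Delta_w, \calG) = 0$ for every $w$ (equivalently, $(i_w)^!\calG$ is acyclic for every $w$), and conclude that $\bbC(\calI)$ is costandard-filtered. The main (and only real) obstacle is the bookkeeping around the derived adjunction: one must be sure that $L\bbB \dashv R\bbC$ is valid on the bounded derived categories of the hearts, which follows from the existence of enough projectives on the $MN$-side, finite cohomological dimension of $\calM(D_\lambda, MN)$, and exactness of $\bbC$; everything else is formal.
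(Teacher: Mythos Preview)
Your proof is correct and is essentially the same as the paper's: both fix an injective $I$, use the adjunction $L\bbB \dashv R\bbC$ together with proposition~\ref{prop_LBtilt} to show that $RHom(S,\bbC I)$ is acyclic for every standard $S$, and then invoke (the dual of) lemma~\ref{lem_charstflt} from appendix~\ref{app_str}. The only difference is cosmetic---you spell out the Ext computation degree by degree, whereas the paper phrases it in terms of acyclicity of $RHom$ in one line.
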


\begin{proof}
Let $I \in \calM (D_{\lambda} , K)$ be injective. In view of lemma \ref{lem_charstflt}, we want to show that for every standard object $S \in \calM (D_{\lambda} , MN)$, The $Hom$-space $RHom_{D^b(\calM (D_{\lambda} , MN))}(S, \bbC I)$ is acyclic. By adjunction, this is the same as $RHom_{D^b(\calM(D_{\lambda},K))}(L\bbB S , I)$. Since $L\bbB S$ is acyclic by proposition \ref{prop_LBtilt} and $I$ is injective, this $Hom$-space is clearly acyclic.
\end{proof}

%%%%%%%%%%%%%%%%%%%%%%%%%%%%%%%%%%%%%
%%%%%%%%%%%%%%%%%%%%%%%%%%%%%%%%%%%%%
%The main statement
%%%%%%%%%%%%%%%%%%%%%%%%%%%%%%%%%%%%%
%%%%%%%%%%%%%%%%%%%%%%%%%%%%%%%%%%%%%

\pagebreak
\chapter{The main statement}\label{chp_main}

In this chapter we prove our main statement, that the module one obtains by applying $\calC$ to a principal series Harish-Chandra module admits a canonical $W$-filtration whose subquotients are twisted Verma modules.

\section{Principal series modules}

For $V \in \calM^{[\lambda]}(\frakg , K)$, denote by $V_{(\frakb , \lambda)} \in \calM^{\lambda}(\frakt , M)$ the module of coinvariants, i.e. the quotient of $V$ by the subspace generated $\xi - (\lambda-\rho)(\xi)$, for $\xi \in \frakb$.

\begin{definition}\label{def_PrinSer}
\begin{enumerate}[(1)]
	\item  the \textbf{principal series functor} $$ \calM^{[\lambda]}(\frakg , K) \leftarrow \calM^{\lambda} (\frakt , M) : Pr^{\lambda}$$ is the right adjoint to $$ (\cdot)_{(\frakb , \lambda)} : \calM^{[\lambda]}(\frakg , K) \to \calM^{\lambda} (\frakt , M).$$
	\item The \textbf{geometric principal series functor}  $$ \calM (D_{\lambda} , K) \leftarrow \calM^{\lambda} (\frakt , M) : \calP r^{\lambda}$$ is defined as $\calP r^{\lambda} := j_* \circ [\cdot]^{\lambda}$.
\end{enumerate}

\end{definition}

\begin{remark}
	We have $\calP r^{\lambda} \is \bbB \circ (i_1)_! \circ [\cdot]^{\lambda}_1$; This is clear from the geometry in the definition of $\bbB$.
\end{remark}

The next lemma will, in particular, show that $Pr^{\lambda}$ exists.

\begin{lemma}
We have $Pr^{\lambda} = \Gamma \circ\calP r^{\lambda}$.
\end{lemma}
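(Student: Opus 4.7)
The plan is to prove the isomorphism by exhibiting a chain of natural bijections, which simultaneously shows that the right adjoint $Pr^{\lambda}$ exists and that it is computed by $\Gamma \circ \calP r^{\lambda}$. Concretely, for $V \in \calM^{[\lambda]}(\frakg, K)$ and $E \in \calM^{\lambda}(\frakt, M)$, I will construct a natural isomorphism
$$\operatorname{Hom}_{\calM^{[\lambda]}(\frakg, K)}(V, \Gamma \calP r^{\lambda}(E)) \cong \operatorname{Hom}_{\calM^{\lambda}(\frakt, M)}(V_{(\frakb, \lambda)}, E).$$

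The argument chains together three adjunctions. First, the localization-globalization adjunction $\Lambda \dashv \Gamma$ converts the left-hand side into $\operatorname{Hom}_{\calM(D_{\lambda}, K)}(\Lambda V, j_*[E]^{\lambda})$. Second, because $j : U \to X$ is an open embedding, the pullback $j^{\circ}$ is left adjoint to the pushforward $j_*$, so this becomes $\operatorname{Hom}_{\calM(j^{\cdot}D_{\lambda}, K)}(j^{\circ}\Lambda V, [E]^{\lambda})$. Third, the equivalence $[\cdot]^{\lambda} : \calM^{\lambda}(\frakt, M) \approx \calM(j^{\cdot}D_{\lambda}, K)$ with inverse $\operatorname{Fib}_{x_0}$ converts this into $\operatorname{Hom}_{\calM^{\lambda}(\frakt, M)}(\operatorname{Fib}_{x_0}(j^{\circ}\Lambda V), E)$.

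What remains, and what I expect to be the only substantive step, is to produce a natural isomorphism $\operatorname{Fib}_{x_0}(j^{\circ}\Lambda V) \cong V_{(\frakb, \lambda)}$. This is a standard unwinding of the Beilinson-Bernstein localization construction: since $\Lambda V = D_{\lambda} \otimes_{U(\frakg)} V$, the fiber of $\Lambda V$ at $x_0$ equals the fiber of $D_{\lambda}$ at $x_0$ tensored over $U(\frakg)$ with $V$. The fiber of $D_{\lambda}$ at $x_0$ is, by construction of the TDO $D_{\lambda}$ and our shifted convention $D_{\rho} \is D_X$, a right $U(\frakg)$-module on which $\frakn = \frakn_{x_0}$ acts by zero and $\frakt$ acts by the character $\lambda - \rho$; moreover $M$ acts naturally since it fixes $x_0$. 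Tensoring over $U(\frakg)$ therefore produces the quotient of $V$ by the subspace generated by $\xi - (\lambda - \rho)(\xi)$ for $\xi \in \frakb$, which is exactly $V_{(\frakb, \lambda)}$ in $\calM^{\lambda}(\frakt, M)$.

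The main (really the only) obstacle is to pin down the $\rho$-shift correctly and check $M$-equivariance at $x_0$; both are handled once one carefully inspects how $D_{\lambda}$ is defined in the shifted convention and uses that $M \subset T$ stabilizes $x_0$ and acts on the fiber by the restriction of $\lambda - \rho$. Once this identification is in hand, composing the three adjunction bijections above yields the desired natural isomorphism, proving both that $Pr^{\lambda}$ exists and that $Pr^{\lambda} = \Gamma \circ \calP r^{\lambda}$.
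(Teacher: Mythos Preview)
Your proposal is correct and follows essentially the same approach as the paper: the paper's one-line proof simply states that the left adjoint of $\Gamma \circ j_* \circ [\cdot]^{\lambda}$ is $Fib_{x_0} \circ j^* \circ \Lambda$, which sends $V$ to the fiber of $\Lambda(V)$ at $x_0$, namely $V_{(\frakb,\lambda)}$. You have merely unpacked this into the three constituent adjunctions and spelled out the fiber computation explicitly.
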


\begin{proof}

The left adjoint of $\Gamma \circ \calP r^{\lambda} = \Gamma \circ j_* \circ [\cdot]^{\lambda}$ is $ Fib_{x_0} \circ j^* \circ \Lambda$, i.e. it takes a module $V$ to the fiber of $\Lambda (V)$ at $x_0$, which is indeed $V_{(\frakb , \lambda)}$.
\end{proof}

Another property which we record is:

\begin{lemma}\label{lem_PrinInj}
The objects in the image of $$ \calM(D_{\lambda}, K) \leftarrow \calM^{\lambda}(\frakt , M): \calP r^{\lambda}$$ are injective.
\end{lemma}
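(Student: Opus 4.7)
The plan is to verify injectivity directly by splitting short exact sequences. Let
$$0 \to j_*[E]^{\lambda} \to \calG \to \calF \to 0$$
be an arbitrary short exact sequence in $\calM(D_{\lambda}, K)$; I will construct a retraction $\tilde{\sigma} : \calG \to j_*[E]^{\lambda}$. Two inputs drive the argument. First, since $j : U \to X$ is an open $K$-embedding, the functor $j^*$ between the equivariant abelian categories $\calM(D_{\lambda}, K)$ and $\calM(j^{\cdot} D_{\lambda}, K)$ is exact and admits $j_*$ as a right adjoint, with $j^* j_* \is \mathrm{id}$. Second, the category $\calM(j^{\cdot}D_{\lambda}, K)$ is semisimple: the fiber equivalence $Fib_{x_0} : \calM(j^{\cdot} D_{\lambda}, K) \xrightarrow{\sim} \calM^{\lambda}(\frakt, M)$ recorded in the excerpt identifies it with finite-dimensional algebraic $M$-modules (as $\frakt$ acts by the scalar $\lambda - \rho$), and $M$ is a finite group in characteristic zero.

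Applying the exact $j^*$ to the given sequence gives a short exact sequence
$$0 \to [E]^{\lambda} \to j^*\calG \to j^*\calF \to 0$$
in $\calM(j^{\cdot} D_{\lambda}, K)$; by semisimplicity it splits, so I pick a retraction $\sigma : j^*\calG \to [E]^{\lambda}$. Transferring $\sigma$ across the $(j^*, j_*)$-adjunction produces $\tilde{\sigma} := j_*(\sigma) \circ \eta_{\calG} : \calG \to j_*[E]^{\lambda}$, where $\eta$ is the unit. Naturality of $\eta$, combined with $\eta_{j_*[E]^{\lambda}} = \mathrm{id}$ (a consequence of $j^* j_* = \mathrm{id}$), shows that precomposing $\tilde{\sigma}$ with the inclusion $j_*[E]^{\lambda} \hookrightarrow \calG$ gives $j_*(\sigma \circ j^*(\mathrm{incl}))$, which equals $j_*(\mathrm{id}) = \mathrm{id}$ because $\sigma$ is a retraction. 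Thus $\tilde{\sigma}$ splits the sequence, proving injectivity of $j_*[E]^{\lambda}$.

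I expect the main obstacle to be bookkeeping rather than conceptual: setting up the equivariant $(j^*, j_*)$-adjunction on the nose, verifying $j^* j_* = \mathrm{id}$ for this TDO and executing the adjunction chase inside the strongly equivariant category $\calM(D_{\lambda}, K)$ requires some care with twists and equivariant structures on the open $K$-orbit. The substantive content is entirely absorbed into the semisimplicity of $\calM^{\lambda}(\frakt, M)$. A more conceptual alternative would be to compute $\mathrm{Ext}^{\geq 1}_{\calM(D_{\lambda}, K)}(-, j_*[E]^{\lambda})$ inside the Bernstein--Lunts category $\calD(D_{\lambda}, K)$ (which, as in section \ref{sec_MN} for $MN$, is equivalent to $D^b(\calM(D_{\lambda}, K))$ because the $K$-orbits are finite in number and affine), use that $j$ being an affine open embedding yields $Rj_* = j_*$, and transfer via derived adjunction to $\calD(j^{\cdot} D_{\lambda}, K) = D^b(\text{semisimple})$, where higher $\mathrm{Ext}$'s visibly vanish.
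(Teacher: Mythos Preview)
Your argument is correct and uses exactly the same two ingredients as the paper's one-line proof: that $j_*$ has an exact left adjoint $j^*$, and that $\calM^{\lambda}(\frakt,M)$ (equivalently $\calM(j^{\cdot}D_{\lambda},K)$) is semisimple. The paper simply invokes the general principle that a right adjoint of an exact functor preserves injectives, whereas you have unpacked the standard splitting proof of that principle in this particular instance.
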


\begin{proof}
The follows at once from $j_*$ having an exact left adjoint, and $\calM^{\lambda}(\frakt , M)$ being semisimple.
\end{proof}

\section{Twisted Verma modules}\label{sec_tvm}

\begin{definition}
	Given $w \in W$, the \textbf{twisted Verma functor} $$ \calM (\frakg , MN) \leftarrow \calM^{\lambda} (\frakt , M):  V^{\lambda}_w$$ is defined as $$ V^{\lambda}_w := \Gamma \circ (i_w)_* \circ [\cdot]^{\lambda}_w.$$
\end{definition}

\section{Main statement}

\begin{theorem}\label{thm_main}
Let $E \in \calM^{\lambda} (\frakt , M)$.
\begin{enumerate}[(a)]
	\item The object $$\bbC(\calP r^{\lambda} (E)) \in \calM(D_{\lambda} , MN)$$ is costandard-filtered.
	\item The $w$-th subquotient in the canonical $W$-filtration of $\bbC(\calP r^{\lambda} (E))$ is isomorphic to $(i_w)_* [E]^{\lambda}_w$.
\end{enumerate}
\end{theorem}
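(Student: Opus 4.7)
Part (a) is essentially formal from the pieces already assembled. By Lemma \ref{lem_PrinInj}, $\calP r^{\lambda}(E) = j_{*}[E]^{\lambda}$ is injective in $\calM(D_{\lambda},K)$, and by Proposition \ref{prop_Ctilt} the functor $\bbC$ sends injective objects to costandard-filtered ones; so $\bbC(\calP r^{\lambda}(E))$ is costandard-filtered. The formalism of appendix \ref{app_str} then equips this object with its canonical Bruhat ($W$-)filtration, whose $w$-th subquotient is $(i_{w})_{*}(i_{w})^{!}\bbC(\calP r^{\lambda}(E))$, with each $(i_{w})^{!}\bbC(\calP r^{\lambda}(E))$ concentrated in cohomological degree $0$.

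For part (b), it thus remains to identify $(i_{w})^{!}\bbC(\calP r^{\lambda}(E)) \in \calM(i_{w}^{\cdot}D_{\lambda},MN) \is \calM^{\lambda}(\frakt,M)$ with $[E]^{\lambda}_{w}$. The plan is to apply Yoneda: for every test object $F \in \calM^{\lambda}(\frakt,M)$, a chain of adjunctions gives
\[
Hom([F]^{\lambda}_{w},(i_{w})^{!}\bbC(\calP r^{\lambda}(E))) \is Hom((i_{w})_{!}[F]^{\lambda}_{w},\bbC(\calP r^{\lambda}(E))) \is Hom(\bbB((i_{w})_{!}[F]^{\lambda}_{w}),j_{*}[E]^{\lambda}),
\]
using $(i_{w})_{!} \dashv (i_{w})^{!}$ together with $\bbB \dashv \bbC$ (claim \ref{clm_bbC}). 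Then $j^{*} \dashv j_{*}$ with the equivalence $Fib_{x_{0}} : \calM(j^{\cdot}D_{\lambda},K) \approx \calM^{\lambda}(\frakt,M)$ rewrites the right-hand side as $Hom(Fib_{x_{0}}(j^{*}\bbB((i_{w})_{!}[F]^{\lambda}_{w})), E)$. The theorem thus reduces, by Yoneda, to producing a natural isomorphism
\[
Fib_{x_{0}}(j^{*}\bbB((i_{w})_{!}[F]^{\lambda}_{w})) \is F \quad \text{in } \calM^{\lambda}(\frakt,M).
\]

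For this last step I would invoke the intertwining computation already used in the proof of proposition \ref{prop_BDerTilt}. Applying $I_{*,w^{-1}}$ on the $MN$-side sends $(i_{w})_{!}[F]^{\lambda}_{w}$ to $(i_{1})_{!}[F]^{w^{-1}\lambda}_{1}$, and by lemma \ref{lem_BerInt} the functor $\bbB$ commutes with $I_{*,w^{-1}}$; inverting by $I_{!,w}$ gives $\bbB((i_{w})_{!}[F]^{\lambda}_{w}) \is I_{!,w}(\calP r^{w^{-1}\lambda}(F))$. It then suffices to check that $Fib_{x_{0}}\circ j^{*}\circ I_{!,w}$ applied to $j_{*}[F]^{w^{-1}\lambda}$ returns $F$, a computation local on the open $K$-orbit and carried out using the explicit geometry of the kernel $Z_{w}$ near $x_{0}$ (the $w=1$ base case is trivial since $I_{!,1}$ is the identity and $j^{*}j_{*}=\mathrm{id}$).

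The main obstacle is precisely this final fibre computation. The geometry is that in the split case $U \cap \calO_{w}$ is a single $M$-orbit through $wx_{0}$, and one must verify that the interaction of the correcting line bundle $\calL_{w}$ in the definition of $I_{!,w}$ with the normalizing factor $\wedge^{\text{top}}(\frakk/\frakm)^{*}$ arising in $Av^{M}_{K}$, and with the weight-shift between $\calM^{\lambda}(\frakt,M)$ and $\calM^{w^{-1}\lambda}(\frakt,M)$, all conspire to recover exactly the identity functor on $\calM^{\lambda}(\frakt,M)$ rather than a nontrivial twist by a character of $M$. The analogous bookkeeping was sidestepped in the proof of proposition \ref{prop_BDerTilt}, which only required acyclicity rather than an explicit identification of the output, so it is at this step where the present theorem needs the real work.
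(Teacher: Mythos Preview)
Your proof of part (a) is exactly the paper's proof.

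For part (b) you take a different route from the paper, and the difference is worth noting. Both you and the paper reduce, via the adjunctions $(i_w)_! \dashv (i_w)^!$ and $\bbB \dashv \bbC$, to computing $Hom(\bbB((i_w)_![F]^{\lambda}_w),\calP r^{\lambda}(E))$ for irreducible $E,F$. From there the paper does \emph{not} try to identify $\bbB((i_w)_![F]^{\lambda}_w)$ or its restriction to $U$ explicitly. Instead it observes (as you also do, via $j^*\dashv j_*$) that the corresponding $RHom$ in $\calD(D_{\lambda},K)$ is acyclic, so its dimension equals its Euler characteristic; it then invokes claim \ref{clm_pseqgrth}, which says that the objects $\bbB((i_w)_![F]^{\lambda}_w)$ for varying $w$ all coincide in the Grothendieck group (ultimately because principal series with the same infinitesimal and central character are equal in the Grothendieck group). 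This lets one replace $w$ by $1$ without any geometric computation, and for $w=1$ the answer is immediate since $\bbB((i_1)_![F]^{\lambda}_1)=j_*[F]^{\lambda}$.

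Your approach, by contrast, aims to produce a \emph{natural} isomorphism $Fib_{x_0}\, j^*\,\bbB((i_w)_![F]^{\lambda}_w)\is F$ via intertwining functors. If that fibre computation goes through, you would in fact prove conjecture \ref{cnj_geomlem} (the canonical, functorial identification of the subquotients), which the paper explicitly leaves open---see the remark following the theorem. So the ``main obstacle'' you flag is genuine: the bookkeeping of $\calL_w$, the shift between $\calM^{\lambda}(\frakt,M)$ and $\calM^{w^{-1}\lambda}(\frakt,M)$, and the normalizing line is precisely the content the paper sidesteps by passing to Euler characteristics. For the theorem as stated (an isomorphism of objects, not of functors), the paper's Grothendieck-group trick gives a complete and much shorter argument; your route is more ambitious and, as you acknowledge, not yet finished.
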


\begin{remark}
	Although the $W$-filtrations that we obtain in the above theorem and the corollary that follows are canonical, we are not able yet to identify canonically the subquotients, although it is clear that conjecture \ref{cnj_geomlem} should hold.
\end{remark}

\begin{proof}[Proof (of Theorem \ref{thm_main})]
That $\bbC(\calP r^{\lambda} (E))$ is costandard-filtered follows from the facts that objects in the image of  $\calP r^{\lambda}$ are injective (lemma \ref{lem_PrinInj}) and $\bbC$ sends injective objects to costandard-filtered objects (proposition \ref{prop_Ctilt}).

The proof of part $(b)$ will be sketched in appendix \ref{app_detail}.

\end{proof}

\begin{corollary}\label{cor_main}
Let $E \in \calM^{\lambda} (\frakt , M)$. The object  $$ \calC (Pr^{\lambda} (E)) \in \calM (\frakg, MN)$$ (this is the Casselman-Jacquet functor applied to a principal series module) admits a canonical $W$-filtration, whose $w$-th subquotient is isomorphic to $V^{\lambda}_w (E)$.
\end{corollary}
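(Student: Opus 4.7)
The corollary will follow from Theorem \ref{thm_main} by applying the global sections functor $\Gamma$ to the canonical $W$-filtration constructed there. First I would combine Lemma \ref{prop_CasGamma}, which supplies the natural isomorphism $\Gamma \circ \bbC \cong \calC \circ \Gamma$ between the $[\lambda]$-weight categories, with the lemma identifying $Pr^{\lambda} \cong \Gamma \circ \calP r^{\lambda}$, to obtain the identification
\[
\calC(Pr^{\lambda}(E)) \;\cong\; \calC(\Gamma(\calP r^{\lambda}(E))) \;\cong\; \Gamma(\bbC(\calP r^{\lambda}(E))).
\]
Theorem \ref{thm_main} then supplies a canonical $W$-filtration $0 = \calF_0 \subset \calF_1 \subset \cdots \subset \calF_{|W|} = \bbC(\calP r^{\lambda}(E))$ in $\calM(D_{\lambda}, MN)$ whose successive subquotients, indexed by $w \in W$, are the costandard objects $(i_w)_*[E]^{\lambda}_w$.

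Next I would apply $\Gamma$ term-wise to this filtration. Identification of the subquotients is essentially tautological: $\Gamma((i_w)_*[E]^{\lambda}_w)$ equals $V^{\lambda}_w(E)$ by the very definition of the twisted Verma functor in Section \ref{sec_tvm}. Thus, once I know that $\Gamma$ carries each short exact sequence $0 \to \calF_{i-1} \to \calF_i \to (i_w)_*[E]^{\lambda}_w \to 0$ of the filtration to a short exact sequence of $(\frakg, MN)$-modules, I obtain the desired canonical $W$-filtration of $\calC(Pr^{\lambda}(E))$ with subquotients $V^{\lambda}_w(E)$.

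The main (and essentially only non-formal) point is the acyclicity of each costandard object for $\Gamma$: I would prove that $R^{>0}\Gamma(X, (i_w)_*[E]^{\lambda}_w) = 0$, which by induction on the length of the filtration yields $R^{>0}\Gamma(X, \calF_i) = 0$ for every $i$ and hence the required surjectivity of $\Gamma(\calF_i) \twoheadrightarrow \Gamma((i_w)_*[E]^{\lambda}_w)$ in the long exact sequence of derived global sections. The key inputs here are that each orbit $\calO_w$ is affine, being an $MN$-orbit on the flag variety isomorphic to an affine space (as exploited already in Section \ref{sec_MN}), and that $R\Gamma(X, (i_w)_*(\cdot)) \cong R\Gamma(\calO_w, \cdot)$ along the locally closed embedding $i_w$; together these reduce the vanishing to the standard acyclicity of quasi-coherent twisted $D$-modules on an affine variety. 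With this acyclicity in hand, the filtration descends intact from $\bbC(\calP r^{\lambda}(E))$ to $\calC(Pr^{\lambda}(E))$, completing the proof.
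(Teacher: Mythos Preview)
Your proposal is correct and follows essentially the same approach as the paper: identify $\calC(Pr^{\lambda}(E)) \cong \Gamma(\bbC(\calP r^{\lambda}(E)))$ via Lemma~\ref{prop_CasGamma} and the identification $Pr^{\lambda}=\Gamma\circ\calP r^{\lambda}$, push the canonical $W$-filtration of Theorem~\ref{thm_main} through $\Gamma$, and justify exactness on the relevant short exact sequences by the $\Gamma$-acyclicity of costandard objects coming from affineness of the orbits $\calO_w$. The only cosmetic difference is that the paper phrases the filtration as a $W$-filtration indexed by closed subsets (invoking Remark~\ref{rem_funcfilt}) rather than as a linear chain, but your inductive argument establishes exactly the hypothesis that remark requires.
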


\begin{proof}
We have: $$\calC(Pr^{\lambda}(E)) \is \calC(\Gamma (\calP r^{\lambda}(E))) \is \Gamma (\bbC(\calP r^{\lambda}(E))).$$ We now apply $\Gamma$ to the canonical $W$-filtration on $\bbC(\calP r^{\lambda}(E))$ to obtain a $W$-filtration on $\calC(Pr^{\lambda}(E))$, whose $w$-th subquotient is isomorphic to $\Gamma ( (i_w)_* [E]^{\lambda}_w) = V^{\lambda}_w (E)$. For this to be possible, according to remark \ref{rem_funcfilt}, we want to see that $\Gamma$ transforms a short exact sequence whose first term is costandard-filtered, into a short exact sequence. This is evident, since the derived global sections of an object of the form $(i_w)_* \calF$, where $\calF \in \calM (i_w^{\cdot} D_{\lambda} , MN)$, are acyclic, since $\calO_w$ is affine.
\end{proof}

%%%%%%%%%%%%%%%%%%%%%%%%%%%%%%%%%%%%%
%%%%%%%%%%%%%%%%%%%%%%%%%%%%%%%%%%%%%
%Some questions and conjectures
%%%%%%%%%%%%%%%%%%%%%%%%%%%%%%%%%%%%%
%%%%%%%%%%%%%%%%%%%%%%%%%%%%%%%%%%%%%

\pagebreak
\chapter{Some questions and conjectures}\label{chp_conj}

\section{Regarding $\calC$}

The answer to the following question if probably "yes".

\begin{question}\label{que_fg}
Does the functor $$Mod (\frakg , MN) \leftarrow Mod (\frakg , K) : \calC$$ preserve finite-generation over $U(\frakg)$?
\end{question}

The following conjecture is very interesting.

\begin{conjecture}\label{conj_exact}
The functor $\calC$ is exact on the subcategory of modules in $Mod(\frakg , K)$ which are finitely generated over $U(\frakg)$.
\end{conjecture}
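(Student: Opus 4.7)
The plan is to extend Casselman's Artin-Rees based argument for the exactness of $J^*$ (recalled in Section 5.3.1) from the Harish-Chandra setting to all $(\frakg, K)$-modules finitely generated over $U(\frakg)$. Since $\calC$ is right adjoint to $\calB$, it is automatically left exact, so the task is to show that $\calC(V) \to \calC(V'')$ is surjective for every short exact sequence $0 \to V' \to V \to V'' \to 0$ of $(\frakg, K)$-modules with $V$ finitely generated over $U(\frakg)$. Using the explicit formula $\calC(V) \cong \wedge^{\text{top}}(\frakk/\frakm) \otimes (\prod_\alpha V_\alpha)^{M\text{-fin.}, \frakn\text{-tor.}}$, surjectivity becomes a lifting question: given $w'' = (v''_\alpha) \in \prod_\alpha (V'')_\alpha$ killed by some $\frakn^m$, one must produce an $M$-finite, $\frakn$-torsion lift $w \in \prod_\alpha V_\alpha$. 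An $M$-equivariant set-theoretic lift (available by reductivity of $M$) will satisfy $\frakn^m w \in \prod_\alpha V'_\alpha$ componentwise, so one is left to modify $w$ by an element of $\prod V'_\alpha$ to achieve $\frakn^{m'}$-torsion for some $m' \geq m$.

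A componentwise analysis reduces this lifting problem to the classical Artin-Rees statement with respect to the $\frakn$-adic filtration: for $V$ finitely generated over $U(\frakg)$ and any submodule $V' \subset V$, there exists $d \geq 0$ with $\frakn^{m+d} V \cap V' \subset \frakn^m V'$ for all sufficiently large $m$. Following the strategy of Section 5.3.1, this would itself follow from left Noetherianity of the Rees algebra $U(\frakg)^\star := \bigoplus_{m \geq 0} \frakn^m U(\frakg)$. In the nilpotent case there, Noetherianity of $U(\frakn)^\star$ is deduced by identifying it as a quotient of $U(\widetilde{\frakn})$ with $\widetilde{\frakn} = \bigoplus_m D^m \frakn$ finite-dimensional — the finite-dimensionality being essentially equivalent to nilpotence of $\frakn$. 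For $U(\frakg)^\star$ the analogous Lie subalgebra is infinite-dimensional, so a direct imitation fails; I would instead attempt to construct a finer filtration on $U(\frakg)^\star$ whose associated graded is Noetherian, for instance using a $\mathbb{G}_m$-action dilating $\frakn$ and contracting $\bar{\frakn}$ along the triangular decomposition $\frakg = \bar{\frakn} \oplus \frakt \oplus \frakn$, so as to deform $U(\frakg)^\star$ to a polynomial extension of $U(\frakg)$.

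The main obstacle is exactly this Noetherianity (or Artin-Rees) question for $U(\frakg)^\star$. In the Harish-Chandra setting Casselman sidesteps it by first establishing that Harish-Chandra modules are finitely generated over $U(\bar{\frakn})$ — a fact which uses both $K$-finiteness and the finiteness of the $\calZ(\frakg)$-action — thereby reducing everything to the nilpotent case; without local finiteness of $\calZ(\frakg)$ this reduction is simply unavailable. A promising alternative route is to first establish Conjecture $\langle 4 \rangle$ concerning the existence and $t$-exactness of the derived geometric right adjoint $\bbC^{\mathrm{der}}$; combined with a quasi-coherent extension of the globalization-commutation Lemma \ref{prop_CasGamma}, this would produce the exactness of $\calC$ geometrically, bypassing the direct Artin-Rees question and reducing the conjecture to (geometric) analogues of Question \ref{que_fg} and Conjecture $\langle 4 \rangle$.
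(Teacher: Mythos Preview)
This statement is recorded in the paper as a \emph{conjecture} (Conjecture~\ref{conj_exact}); the paper offers no proof, and in fact the surrounding discussion in Chapter~\ref{chp_conj} treats it as open. So there is no paper proof to compare against, and your write-up is not a proof either: it is a strategy sketch that honestly flags its own gap (the Noetherianity of $U(\frakg)^\star$, equivalently an Artin--Rees statement for the $\frakn$-adic filtration on finitely generated $U(\frakg)$-modules).

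Two remarks on the strategy itself. First, your reduction ``componentwise lift, then correct by an element of $\prod V'_\alpha$'' is more delicate than you indicate. The $\frakn$-action on $\prod_\alpha V_\alpha$ mixes $K$-types, and the Artin--Rees relation $\frakn^{m+d}V\cap V'\subset \frakn^m V'$ lives in $V$, not in the completed object $\prod_\alpha V_\alpha$. In the Harish-Chandra case this discrepancy disappears because $\prod_\alpha V_\alpha$ is literally the abstract dual of $V^\vee$ (Proposition~\ref{prop_relJC}), which in turn uses finite $K$-multiplicities --- a consequence of $\calZ(\frakg)$-finiteness that you no longer have. You would need to argue separately that Artin--Rees for $V'\subset V$ transfers to the required lifting statement in $\prod_\alpha V_\alpha$; this is not automatic. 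Second, even granting that step, the Noetherianity of $U(\frakg)^\star$ is a genuine question, and your proposed $\bbG_m$-deformation of $U(\frakg)^\star$ to a polynomial extension of $U(\frakg)$ is suggestive but not yet an argument: you would need to exhibit a filtration whose associated graded is Noetherian \emph{and} check that Noetherianity lifts, and the naive grading by $\frakn$-degree does not obviously do this.

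Your closing observation --- that a proof of the $t$-exactness of a putative $\bbC^{der}$ (Conjecture~\ref{cnj_gbfa}) together with a non-coherent extension of Lemma~\ref{prop_CasGamma} would give an alternative route --- is in the spirit of the paper's own speculations, but note that Lemma~\ref{prop_CasGamma} is proved only on the Harish-Chandra categories $\calM(\cdot)$, and extending the $\bbB$--$\bbC$ formalism to $Mod(D_\lambda,\cdot)$ with the right finiteness is itself part of the problem.
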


\begin{remark}
	Granted that the conjecture above is proven, it makes sense to redefine $\calC$ on the whole of $Mod(\frakg,K)$, as being as before on modules finitely-generated over $U(\frakg)$, and extended to all modules by the request of commuting with direct limits.
\end{remark}

For the next conjecture, recall the duality equivalence $$(\cdot)^{\vee} : \calM(\frakg , MN)^{op} \to \calM(\frakg , M \theta (N))$$ given by taking the $\theta(\frakn)$-torsion vectors in the abstract dual. Also, denote by $\calC^{-}$ the Casselman-Jacquet functor resulting from the initial choice of the Borel $\theta(B)$, rather than $B$.

\begin{conjecture}\label{conj_dual}

The following diagram canonically $2$-commutes, perhaps up to tensoring by some power of $\wedge^{top} (\frakk / \frakm)$:

$$ \xymatrix{\calM(\frakg , MN) \ar[d]_{(\cdot)^{\vee}} & \calM(\frakg , K) \ar[l]_{\calC} \ar[d]^{(\cdot)^{\vee}} \\ \calM(\frakg , M\theta({N})) & \calM(\frakg , K) \ar[l]_{\calC^{-}}} $$

\end{conjecture}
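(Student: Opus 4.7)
The plan is to reformulate the conjecture in terms of Casselman's functor $J^*$ and then to construct and analyze a canonical pairing between Casselman-Jacquet modules at opposite Borels.

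First, by Proposition \ref{prop_relJC}, there are natural isomorphisms $\calC(V) \is L \otimes J^*(V^\vee)$ and, symmetrically, $\calC^{-}(V^\vee) \is L \otimes J^{*,-}(V)$, writing $L := \wedge^{\text{top}}(\frakk/\frakm)$ and denoting by $J^{*,-}$ Casselman's functor built from the opposite nilpotent $\theta(\frakn)$. Using the identification $J^*(V^\vee) \is \varinjlim_m (V^\vee/\frakn^m V^\vee)^*$, the $\theta(\frakn)$-torsion-in-dual functor sends $J^*(V^\vee)$ to the $\theta(\frakn)$-torsion subspace of the $\frakn$-adic completion $\widehat{V^\vee} := \varprojlim_m V^\vee/\frakn^m V^\vee$. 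The conjecture therefore reduces to a natural isomorphism (up to twist by a fixed power of $L$)
\[
 \theta(\frakn)\text{-tor}\bigl(\widehat{V^\vee}\bigr) \is \theta(\frakn)\text{-tor}(V^*) = J^{*,-}(V).
\]

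The key step will be the construction of a natural $(\frakg,M)$-equivariant bilinear pairing
\[
 P_V \colon \calC(V) \otimes \calC^{-}(V^\vee) \to L',
\]
with $L'$ a suitable one-dimensional vector space. Concretely, ignoring the line factors, $\calC(V)$ sits inside $\prod_\alpha V_\alpha$ and $\calC^{-}(V^\vee)$ sits inside $\prod_\alpha V_\alpha^*$; the proposal is $P_V(f,g) := \sum_\alpha \langle f_\alpha, g_\alpha\rangle$. The heart of the matter is showing that this sum is effectively finite. The simultaneous $\frakn$-torsion of $f$ and $\theta(\frakn)$-torsion of $g$, together with $M$-finiteness (hence generalized $\frakt$-semisimplicity) on both sides and the $\calZ(\frakg)$-finiteness of $V$, should force the paired generalized $\frakh$-weight spaces to contribute nonzero terms only in finitely many $K$-types. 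The proof of this finiteness should follow the weight-space bookkeeping of Lemma \ref{lem_MNcat}: generalized $\frakh$-weights appearing in the $\frakn^m$-torsion part of $\prod V_\alpha$ cluster around the finite $W$-orbit prescribed by the infinitesimal character, and by the opposite-nilpotency, the same analysis on the $\theta(\frakn)$-side matches only finitely many weights with those of $f$.

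Once $P_V$ is constructed, the non-degeneracy is to be established by truncation. Writing $\calC(V) = \cup_m \calC(V)^{\frakn^m}$ and $\calC^{-}(V^\vee) = \cup_m \calC^{-}(V^\vee)^{\theta(\frakn)^m}$ as unions of finite-dimensional subspaces (via Lemma \ref{lem_MNcat} and its proof), the restriction of $P_V$ to each level should be identified with the natural pairing between $(V^\vee/\frakn^m V^\vee)^*$ and a suitable finite-dimensional quotient of $V$ by a power of $\theta(\frakn)$, whose perfectness should follow from an Artin-Rees style argument in the spirit of the exactness proof for $J^*$ recalled in subsection \ref{sec_Jex}, using that $\frakn$ and $\theta(\frakn)$ together control a Noetherian filtration of $V$ modulo $\calZ(\frakg)$-finiteness.

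The principal obstacle is twofold: the well-definedness (i.e.\ finiteness of the sum defining $P_V$) and the non-degeneracy at each finite level, both in a purely algebraic setting. Classically, over $\bbR$, both are encoded in Casselman's canonical pairing and rest on automatic continuity together with matrix-coefficient asymptotics, and an algebraic substitute for these analytic inputs is not evident. A natural alternative route, avoiding the direct construction of $P_V$ altogether, would be to first establish Conjecture \ref{conj_dual2} identifying $\calC$ with the Emerton-Nadler-Vilonen functor $J$ of \cite{ENV}, and then deduce the duality from the Verdier self-duality of nearby cycles in the geometric picture there.
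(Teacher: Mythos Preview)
The statement you are attempting to prove is not proved in the paper: it is stated there as Conjecture~\ref{conj_dual} and left open. The only thing the paper establishes about it is its equivalence with Conjecture~\ref{conj_dual2} (that $\calC \is J$ up to a line twist), via Proposition~\ref{prop_relJC} and a short manipulation with duals and completions; the paper also remarks that in the real-group case the statement is presumably accessible by analytic means (Casselman's canonical pairing). So there is no ``paper's own proof'' to compare to.

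As for your proposal itself, you correctly isolate the two essential difficulties and do not actually overcome either of them. For the finiteness of the sum $\sum_\alpha \langle f_\alpha, g_\alpha\rangle$, the weight-bookkeeping you invoke from Lemma~\ref{lem_MNcat} controls generalized $\frakt$-weights inside each of $\calC(V)$ and $\calC^{-}(V^{\vee})$ separately, but the index $\alpha$ runs over $K$-types, not $\frakt$-weights, and there is no a priori reason why the $\frakn$-torsion condition on $f$ and the $\theta(\frakn)$-torsion condition on $g$ should force all but finitely many $K$-type pairings to vanish; translating between the two decompositions is exactly where matrix-coefficient asymptotics enter in the analytic proof. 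For the non-degeneracy, the Artin--Rees argument of \S\ref{sec_Jex} is a statement about a single nilpotent Lie algebra acting on a finitely generated module; what you need is a compatibility between the $\frakn$-adic and $\theta(\frakn)$-adic filtrations on $V$ and $V^{\vee}$, and no such statement is available in the paper (nor does it follow from the Noetherianity of $U(\frakn)^{\star}$). Your closing paragraph is accurate: these gaps are precisely the content of the conjecture, and the alternative route you suggest through \cite{ENV} and Verdier duality of nearby cycles is essentially the geometric reformulation the paper records as Conjecture~\ref{cnj_gjf}.
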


\begin{remark}
	It seems that the above conjecture is known to be true in the analytic case; See the last paragraph in section 12 of \cite{C2}.
\end{remark}

Let us denote by $J$ the functor that sends $V \in \calM(\frakg , K)$ to the submodule of $\frakn$-finite vectors in the $\theta(\frakn)$-completion of $V$ ($J$ is a version of the Casselman-Jacquet functor; See, for example, \cite{ENV}). Conjecture \ref{conj_dual} is equivalent to the following one:

\begin{conjecture}\label{conj_dual2}
The functors $\calC$ and $J$ are canonically isomorphic, perhaps up to tensoring by some power of $\wedge^{top} (\frakk / \frakm)$.
\end{conjecture}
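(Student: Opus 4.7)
My plan is to prove the conjecture in two stages: first construct a natural transformation $\calC(V) \to J(V)$ (up to a twist by a power of $\wedge^{\text{top}}(\frakk/\frakm)$), and then verify it is an isomorphism by reducing, via exactness of both sides, to the case of principal series modules.

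\textbf{Step 1: Constructing a natural transformation.} Using Proposition \ref{prop_relJC}, I would identify $\calC(V)$ with $\wedge^{\text{top}}(\frakk/\frakm) \otimes J^*(V^\vee)$, where $J^*(V^\vee) \subset \prod_\alpha V_\alpha$ is the $\frakn$-torsion subspace (using $(V^\vee)^* \cong \prod_\alpha V_\alpha$ since each $V_\alpha$ is finite-dimensional). On the other side, $J(V)$ is the $\frakn$-finite subspace of $\varprojlim_m V/\theta(\frakn)^m V$. The Iwasawa-style PBW decomposition $\frakg = \frakk \oplus \frakh \oplus \theta(\frakn)$ gives a description of each $V/\theta(\frakn)^m V$ in terms of the $K$-type components of $V$, and I would use this to produce a canonical map $\prod_\alpha V_\alpha \to \varprojlim_m V/\theta(\frakn)^m V$, well-defined on the $\frakn$-torsion subspace and landing in the $\frakn$-finite subspace.

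\textbf{Step 2: Reduction to principal series.} Both $\calC$ (Theorem \ref{thm_Cex}) and $J$ (by Casselman \cite{C2}) are exact. Since any Harish-Chandra module admits a resolution by direct sums of principal series modules --- or, failing that, since every irreducible Harish-Chandra module is a subquotient of a principal series and a d\'{e}vissage reduces to that case --- it suffices to verify that the natural transformation is an isomorphism on principal series. Corollary \ref{cor_main} computes $\calC(Pr^\lambda(E))$ as a canonically $W$-filtered module with twisted Verma subquotients $V_w^\lambda(E)$, while \cite{ENV} computes $J(Pr^\lambda(E))$ via the nearby-cycles description and obtains the same filtered structure. Verifying that the natural transformation of Step 1 respects these filtrations and is an isomorphism on each $V_w^\lambda(E)$ then concludes the argument.

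\textbf{Main obstacle.} The crux is the algebraic construction of the natural transformation in Step 1. Casselman's proof in the analytic case \cite{C2} relies on automatic continuity and the compactness of the maximal compact subgroup --- tools unavailable in the purely algebraic setup of section \ref{sec_setting}. An algebraic substitute must exploit $K$-multiplicity finiteness of Harish-Chandra modules, the locally finite action of $\calZ(\frakg)$, and the precise combinatorics of how $U(\frakk)$ and $U(\theta(\frakn))$ intertwine under PBW, in order to exhibit a canonical comparison map between two completions of quite different natures and to show it preserves the appropriate $\frakn$-finiteness conditions. Overcoming this obstacle is essentially what separates the conjecture from being a theorem in the algebraic setting.
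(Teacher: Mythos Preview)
The paper does not prove this statement: it is stated as an open \emph{conjecture}, and the only thing the paper establishes about it is its equivalence with Conjecture~\ref{conj_dual} (the duality conjecture), via the short argument following Conjecture~\ref{conj_dual2}. There is therefore no ``paper's own proof'' to compare your attempt against.

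Your proposal is honest about this: you yourself identify Step~1 as the main obstacle and say that overcoming it ``is essentially what separates the conjecture from being a theorem.'' That is an accurate assessment, and it means what you have written is a strategy outline, not a proof. A few additional remarks on the outline: in Step~2, the claim that Harish-Chandra modules admit resolutions by principal series is not true in general, and the d\'evissage via subquotients only reduces the isomorphism check to irreducibles once you already have a natural transformation compatible with short exact sequences---so Step~2 genuinely depends on Step~1 being completed first. Also, invoking \cite{ENV} for the structure of $J(Pr^\lambda(E))$ presupposes the identification of the algebraic $J$ with the geometric Jacquet functor, which is itself Conjecture~\ref{cnj_gjf} in the paper and would be circular here.
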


Let us see why the conjectures \ref{conj_dual} and \ref{conj_dual2} are equivalent, \textbf{omitting everywhere powers of $\wedge^{top} (\frakk / \frakm)$}. Conjecture \ref{conj_dual} says that $\calC (V) \is (\calC^{-}(V^{\vee}))^{\vee}$ canonically, while conjecture \ref{conj_dual2} says that $\calC (V) \is J (V)$ canonically. Thus, in order to see that the conjectures are equivalent, we want to check that $(\calC^{-}(V^{\vee}))^{\vee} \is J (V)$ canonically. From proposition \ref{prop_relJC}, $\calC^{-}(V) \is J^{-*} (V^{\vee})$ (here, $J^{-*}$ is the functor $J^*$ w.r.t. the choice of Borel $\theta(B)$, rather than $B$). Thus, $(\calC^{-}(V^{\vee}))^{\vee} \is (J^{-*}(V))^{\vee}$. Now, recalling that $J^{-*}(V)$ is the space of $\theta(\frakn)$-torsion vectors in $V^*$, we see that we can identify the abstract dual of $J^{-*}(V)$ with the $\theta(\frakn)$-completion of $V$. Thus, we can identify $(J^{-*}(V))^{\vee}$ with the space of $\frakn$-torsion vectors in the $\theta(\frakn)$-completion of $V$, i.e. with $J(V)$.

\begin{remark}
	From the previous paragraph, we see that without tensoring by a power of $\wedge^{top} (\frakk / \frakm)$, the two conjectures are not exactly compatible; Something like the square root of $\wedge^{top} (\frakk / \frakm)$ will appear in some of them.
\end{remark}

\section{Regarding $\bbB^{der}$}

\begin{conjecture}\label{cnj_gbfa}
	The functor $$ \bbB^{der} : \calD (D_{\lambda} , MN) \to \calD(D_{\lambda} , K) $$ admits a $t$-exact right adjoint $$ \calD (D_{\lambda} , MN) \leftarrow \calD(D_{\lambda} , K) : \bbC^{der}.$$
\end{conjecture}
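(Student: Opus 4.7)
The plan is to build $\bbC^{der}$ as the termwise extension of the exact abelian functor $\bbC$ of Claim \ref{clm_bbC}, and then recover the adjunction by identifying $\bbB^{der}$ with the left derived functor of the abelian $\bbB$. First, I would establish $\calD(D_{\lambda}, K) \cong D^b(\calM(D_{\lambda}, K))$ by the same argument used in Section \ref{sec_MN}: the $K$-orbits on $X$ are finitely many and affine, giving an affine simple stratification on $\calM(D_{\lambda}, K)$, and the criteria of appendix \ref{app_str} apply verbatim. This puts both sides of the intended adjunction into the framework of derived categories of abelian categories with enough projectives/injectives and finite cohomological dimension.

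Since $\bbC$ is exact, its termwise extension yields a manifestly $t$-exact functor $\bbC^{der}: D^b(\calM(D_{\lambda}, K)) \to D^b(\calM(D_{\lambda}, MN))$, which is my candidate for the right adjoint. Meanwhile, the adapted class of standard-filtered objects (Lemma \ref{lem_StAdapt}) provides a well-defined left derived functor $L\bbB$, and the standard formalism of derived adjunctions turns the abelian adjunction $(\bbB, \bbC)$ into a derived adjunction $(L\bbB, \bbC^{der})$ on $D^b$ (exactness of $\bbC$ ensures there is no right-derivation to perform on the right-hand side). What remains is to identify $\bbB^{der}$ with $L\bbB$ under the equivalences $\calD \cong D^b(\calM)$ from step one.

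The main obstacle is this identification. Both functors are right-$t$-exact with $H^0 = \bbB$, and both send standard-filtered objects to acyclic objects (Propositions \ref{prop_BDerTilt} and \ref{prop_LBtilt}), which is the distinguishing property characterizing $L\bbB$ among right-$t$-exact extensions of $\bbB$ with respect to this adapted class. I would construct the comparison map $\bbB^{der} \to L\bbB$ from the universal property of the left derived functor, and verify it is an isomorphism by checking on standards: the explicit computation in the proof of Proposition \ref{prop_BDerTilt} shows $\bbB^{der}((i_w)_! [E]^{\lambda}_w) \cong j_* [E]^{w^{-1}\lambda}$ in cohomological degree zero, which matches the value of $L\bbB$ on the same object. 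The essential geometric input --- that the only non-equivalence piece $\widetilde{p}_*$ in the definition of $\bbB^{der}$ is exact on images of standard-filtered objects under the preceding equivalences --- is precisely the content of Proposition \ref{prop_BDerTilt}. With the identification in place, the adjunction $(\bbB^{der}, \bbC^{der})$ and the $t$-exactness of $\bbC^{der}$ together yield the conjecture.
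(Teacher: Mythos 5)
This statement is left as an open conjecture in the thesis --- no proof is given there --- and your argument does not close it, because its first step fails. The criteria of appendix \ref{app_str} do \emph{not} apply verbatim to the $K$-equivariant side: the comparison functor $D^b(\calM(D_{\lambda},K)) \to \calD(D_{\lambda},K)$ is in general \emph{not} an equivalence. The relevant hypothesis in the appendix is $Hom_{\calD}(S,C[j])=0$ for $j>0$, and this holds for the $MN$-stratification because stabilizers of points in $MN$-orbits are finite-by-unipotent, so each stratum contributes no higher equivariant cohomology. For $K$-orbits the stabilizers $K\cap B_x$ on non-open orbits contain positive-dimensional tori (already for $G=SL_2$, $K=SO_2$, the two closed $K$-orbits on $\bbP^1$ are points with stabilizer $\bbG_m$), so in the Bernstein--Lunts category $\calD(D_{\lambda},K)$ one has nonvanishing $Hom(\calF,\calF[2j])$ for all $j$ between objects of the heart supported on such an orbit, coming from $H^{\ast}(B\bbG_m)$, whereas $D^b(\calM(D_{\lambda},K))$ has no such morphisms (and has finite cohomological dimension). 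Hence $\calD(D_{\lambda},K)\not\cong D^b(\calM(D_{\lambda},K))$, and your identification of $\bbB^{der}$ with $L\bbB$ is not even well posed: the two functors have genuinely different target categories. This is exactly why the thesis is careful to distinguish $\bbB^{der}$ from $L\bbB$ (see the summary, speculation $\langle 4\rangle$: $\bbB^{der}$ is the ``correct'' derived functor, rather than $L\bbB$) and why the existence of a $t$-exact right adjoint on the genuine equivariant derived category is posed as a conjecture.

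Concretely, your candidate $\bbC^{der}$ (termwise extension of the exact $\bbC$) is only a functor out of $D^b(\calM(D_{\lambda},K))$; to be a right adjoint of $\bbB^{der}$ it would have to be defined on, and satisfy the adjunction against, all of $\calD(D_{\lambda},K)$, including the extra higher morphisms described above, which the abelian adjunction $(\bbB,\bbC)$ and Lemma \ref{lem_StAdapt} say nothing about. The comparison-map argument via the universal property of $L\bbB$, and the check on standard objects, only control the composite $D^b(\calM(D_{\lambda},MN))\to\calD(D_{\lambda},K)$ up to the non-full functor $\iota$, so they cannot produce the adjoint. Any genuine proof of Conjecture \ref{cnj_gbfa} has to engage with the equivariant derived category itself (e.g.\ produce $\bbC^{der}$ by a geometric construction or an adjoint functor argument valid in that setting), not reduce it to the naive derived category of the heart.
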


The following conjecture would imply conjecture \ref{conj_dual2}, and thus conjecture \ref{conj_dual}.

\begin{conjecture}\label{cnj_gjf}
	The functor $$ \calM (D_{\lambda} , MN) \leftarrow \calM(D_{\lambda} , K) : \bbC$$ is isomorphic to the geometric Jacquet functor of \cite{ENV} (when their functor is lifted to the correct equivariant categories) - again perhaps up to tensoring by some power of $\wedge^{top} (\frakk / \frakm)$.
\end{conjecture}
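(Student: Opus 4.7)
The plan is to exploit the universal characterization of $\bbC$: by claim \ref{clm_bbC}, $\bbC$ is (up to unique isomorphism) the right adjoint of $\bbB$. So it suffices to exhibit an adjunction $\bbB \dashv \Psi$, where $\Psi$ denotes the geometric Jacquet functor of \cite{ENV}, lifted to the $MN$-equivariant category and possibly twisted by a power of $\wedge^{\text{top}}(\frakk/\frakm)$. Uniqueness of right adjoints then produces the desired natural isomorphism $\bbC \is \Psi$. As a preliminary reduction, by lemma \ref{lem_Btrans} together with the projection formula for nearby cycles, both $\bbC$ and $\Psi$ intertwine tensoring by $\calL_\mu$, so we may assume $\lambda$ is antidominant regular and work with $\calM^{[\lambda]}(\frakg, K)$ in place of $\calM(D_{\lambda},K)$ if convenient.

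The geometric input I would use is the one-parameter family $\{K_t\}_{t\in\bbA^1}$ used in \cite{ENV}: a subgroup scheme of $G\times \bbA^1$ with $K_t = K$ for $t\ne 0$ and $K_0 = MN$, obtained by conjugating $K$ by a cocharacter $a(t)$ into a fixed split torus and taking the flat limit. Given $\calF \in \calM(D_\lambda,K)$, the ENV functor forms a $K_{\bbG_m}$-equivariant family $\tld\calF$ on $X\times \bbG_m$ and sets $\Psi(\calF) := \psi_t \tld\calF$, naturally $MN$-equivariant. The averaging diagram defining $\bbB$ makes sense over the same base $\bbA^1$: one obtains a family $\bbB_{\bbA^1}$ interpolating between $\bbB^{MN}_K$ at $t=0$ and a transport-of-structure equivalence at $t\ne 0$. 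The counit $\bbB\Psi \Rightarrow \text{id}$ should come from the compatibility of $\bbB_{\bbA^1}$ with nearby cycles: applying $\bbB_{\bbA^1}$ to the family $\tld\calF$ yields the constant family $\calF$ on $\bbG_m$, and $\psi_t$ of a constant family is the original fiber, so $\bbB(\Psi(\calF)) = \bbB_0(\psi_t \tld\calF) \simeq \psi_t(\bbB_{\bbA^1}\tld\calF) = \calF$. Similarly the unit $\text{id} \Rightarrow \Psi\bbB$ is produced by applying $\Psi$ to the trivial family arising from an $MN$-equivariant $\calG$ made $K_t$-equivariant via the conjugation.

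The main obstacle is justifying that $\bbB_{\bbA^1}$ commutes with nearby cycles in the required sense. This involves (i) setting up $\bbB$ in the family over $\bbA^1$ as a relative push-pull along the quotient $(K\times \bbA^1)/M \to \bbA^1$ with compatible equivariance, and (ii) a base-change / smooth-pullback argument showing $\psi_t \circ (\text{family push-pull}) \simeq \bbB_0 \circ \psi_t$. Since the averaging involves pushforward along the non-proper morphism $\widetilde p$, one must verify that the relevant pushforward stays in the class of $D$-modules for which nearby cycles commutes (e.g.\ holonomic, with controlled behavior at $t=0$); the reductivity of $M$ and the affineness of $K/M$ should be crucial here. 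The normalizing factor $\wedge^{\text{top}}(\frakk/\frakm)$ arises from the discrepancy between the left and right regular actions in the averaging diagram versus the absence of such a factor in the nearby-cycles construction, and tracking it amounts to computing the determinant of the tangent bundle of $K/M$.

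An alternative, more hands-on route -- which could also serve as verification for the above -- is to check the isomorphism on a generating class of objects. By theorem \ref{thm_main}, $\bbC(\calPr^\lambda(E))$ is costandard-filtered with $w$-th subquotient $(i_w)_* [E]^\lambda_w$. The work of \cite{ENV} gives an analogous Bruhat/costandard filtration for $\Psi(\calPr^\lambda(E))$, with the same subquotients. If one can promote these subquotient identifications into a single natural isomorphism of functors on the generating subcategory of principal series (and then extend by resolutions, using exactness of both functors and the $W$-stratification structure of \S\ref{sec_MN}), the conjecture follows. The difficulty shifts to rigidifying the identifications on subquotients and proving compatibility with morphisms -- exactly the content of conjectures \ref{cnj_geomlem} and \ref{cnj_geomlem2}.
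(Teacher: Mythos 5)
First, note that the statement you are addressing is stated in the paper as an open conjecture (it is one of the speculations of chapter \ref{chp_conj}); the paper offers no proof, only the remark that it would imply conjectures \ref{conj_dual2} and \ref{conj_dual}. So there is no argument of the author to compare against, and the question is whether your proposal closes the gap. It does not: it is a strategy outline whose two crucial steps are exactly the points you flag as ``obstacles'' and leave unproved. The adjoint-uniqueness framing is sound in principle (if the ENV functor, lifted to $\calM(D_\lambda,K)\to\calM(D_\lambda,MN)$, were shown to be right adjoint to $\bbB$, claim \ref{clm_bbC} would give $\bbC\is\Psi$ for free), but the construction of the adjunction is where all the content lies. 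The counit requires commuting the relative averaging functor over $\bbA^1$ with nearby cycles; since the averaging involves the pushforward $\widetilde p_*$ along the non-proper fibration with fibre $K/M$ (a coinvariants operation, not a proper direct image), this is not covered by the standard compatibilities of $\psi_t$ with proper pushforward and smooth pullback, and no argument is given. Worse, the proposed unit $\mathrm{id}\Rightarrow\Psi\bbB$ is incoherent as stated: an $MN$-equivariant module $\calG$ does not carry, and cannot canonically be given, a $K_t$-equivariant structure for $t\neq 0$ (for $t\neq 0$ one has $K_t\cong K$, and producing $K$-equivariance from $MN$-equivariance is precisely what $\bbB$ has to do), so there is no ``trivial family'' to feed into $\Psi$.

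The fallback route also does not close the argument. Theorem \ref{thm_main} identifies the subquotients of $\bbC(\calP r^\lambda(E))$ only up to non-canonical isomorphism; upgrading this to a natural isomorphism of functors on principal series is exactly the content of the open conjectures \ref{cnj_geomlem} and \ref{cnj_geomlem2}, as you acknowledge. Moreover, even granting that, the objects $\calP r^\lambda(E)=j_*[E]^\lambda$ are only the injectives attached to the open $K$-orbit $U$; they are not a class from which every object of $\calM(D_\lambda,K)$ admits a (co)resolution, so ``extend by resolutions'' does not go through --- you would need agreement of $\bbC$ and $\Psi$ on all injectives of $\calM(D_\lambda,K)$ (equivalently on $j'_*$ of irreducibles on every $K$-orbit $j':U'\to X$), together with naturality, which is not addressed. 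In short, the proposal is a reasonable research plan, consistent with the paper's own expectations, but both of its branches terminate in precisely the unresolved points (commutation of the family averaging with $\psi_t$; canonical, functorial identification of subquotients), so the conjecture remains unproved.
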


\section{Regarding the main statement}

\begin{conjecture}\label{cnj_geomlem}
	The $w$-th subquotient of the canonical filtration of the functor $$\bbC\circ \calP r^{\lambda} : \calM^{\lambda} (\frakt , M) \to \calM (D_{\lambda} , MN)$$ is isomorphic to $(i_w)_* \circ [\cdot]^{\lambda}_w$ (perhaps canonically given a choice of a lifting of $w \in N_G(T) / T)$ to an element of $K$). Thus, in turn, the $w$-th subquotient of the canonical filtration of the functor $$\calC \circ P r^{\lambda} :  \calM^{\lambda} (\frakt , M) \to \calM^{[\lambda]} (\frakg , MN)$$ is isomorphic to $V^{\lambda}_w$.
\end{conjecture}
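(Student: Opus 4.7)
The plan is to upgrade Theorem \ref{thm_main}(b) from a non-canonical isomorphism of objects to a natural isomorphism of functors. Since the canonical Bruhat filtration on $\bbC \calP r^{\lambda}(E)$ is functorial in $E$ and its $w$-th subquotient is by Theorem \ref{thm_main}(a) a costandard object $(i_w)_* [E_w]^{\lambda}_w$, passing to the underlying $(\frakt,M)$-representation defines an exact endofunctor $\Psi_w$ of $\calM^{\lambda}(\frakt,M)$ with $\Psi_w(E) := E_w$. The conjecture asserts $\Psi_w \cong \mathrm{id}$ canonically after choosing a lift $\tilde{w}$ of $w$. By $\mathrm{Ext}$-orthogonality of standards and costandards in the $W$-stratified category $\calM(D_{\lambda}, MN)$ (appendix \ref{app_str}) together with the $\bbB \dashv \bbC$ adjunction, for any $F, E$ one has a natural isomorphism
$$\mathrm{Hom}_{\calM(D_{\lambda}, K)}(\bbB(i_w)_![F]^{\lambda}_w, \calP r^{\lambda}(E)) \cong \mathrm{Hom}_{\calM^{\lambda}(\frakt,M)}(F, \Psi_w(E)).$$
By Yoneda it then suffices to construct a natural isomorphism of the left-hand side with $\mathrm{Hom}(F,E)$.

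For the base case $w = 1$, the orbit $\calO_1$ is the single point $\{x_0\}$, and the remark after Definition \ref{def_PrinSer} yields $\bbB(i_1)_![F]^{\lambda}_1 \cong \calP r^{\lambda}(F)$. The Hom becomes $\mathrm{Hom}(\calP r^{\lambda}(F), \calP r^{\lambda}(E)) \cong \mathrm{Hom}(F,E)$, using full faithfulness of $\calP r^{\lambda} = j_* \circ [\cdot]^{\lambda}$ ($j$ is an open embedding, so $j^* j_* \cong \mathrm{id}$; and $[\cdot]^{\lambda}$ is an equivalence). This gives $\Psi_1 \cong \mathrm{id}$ canonically, with no choice of lift required.

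For general $w$, I would reduce to $w = 1$ via the intertwining functors. Combining Lemma \ref{lem_BerInt} with the geometric identity $I_{!,w}(i_1)_![F]^{w^{-1}\lambda}_1 \cong (i_w)_![F]^{\lambda}_w$ used in the proof of Proposition \ref{prop_BDerTilt}, we have
$$\bbB(i_w)_![F]^{\lambda}_w \cong I_{!,w}\bbB(i_1)_![F]^{w^{-1}\lambda}_1 \cong I_{!,w}\calP r^{w^{-1}\lambda}(F).$$
The intertwining adjunction $I_{!,w} \dashv I_{*,w^{-1}}$ then rewrites the target Hom as $\mathrm{Hom}(\calP r^{w^{-1}\lambda}(F), I_{*,w^{-1}}\calP r^{\lambda}(E))$. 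Assuming the compatibility $I_{*,w^{-1}}\calP r^{\lambda}(E) \cong \calP r^{w^{-1}\lambda}(\tilde{w}^{-1} \cdot E)$ (with $\tilde{w}^{-1}\cdot E$ denoting the $M$-representation $E$ twisted by the lift $\tilde{w}^{-1}$ and viewed with $\frakt$ acting by $w^{-1}\lambda - \rho$), full faithfulness of $\calP r^{w^{-1}\lambda}$ reduces the Hom to $\mathrm{Hom}(F, \tilde{w}^{-1}\cdot E)$, which is identified with $\mathrm{Hom}(F,E)$ by the $\tilde{w}$-action. This is precisely where the lift $\tilde{w}$ enters the final natural isomorphism, matching the phrasing of the conjecture.

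The main obstacle is the intertwining-principal-series compatibility
$$I_{*,w^{-1}}\calP r^{\lambda}(E) \cong \calP r^{w^{-1}\lambda}(\tilde{w}^{-1}\cdot E).$$
Both sides are $K$-equivariant $D_{w^{-1}\lambda}$-modules which are $j_*$-extensions from the open $K$-orbit $U$, so the problem reduces to a geometric analysis on $U$: first, verifying that $I_{*,w^{-1}}$ sends $j_*[E]^{\lambda}$ to a $j_*$-extension, which amounts to ruling out higher direct image contributions from the correspondence $Z_{w^{-1}} \cap (X \times U)$; and second, identifying the fiber at $x_0$ of the intertwined object with $\tilde{w}^{-1}\cdot E$ via the $\tilde{w}$-translation relating the fibers at $x_0$ and $wx_0$. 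A cleaner alternative is to first establish Conjecture \ref{cnj_gbfa} and work with the derived right adjoint $\bbC^{\mathrm{der}}$: then commutation of $\bbC^{\mathrm{der}}$ with intertwining follows formally from Lemma \ref{lem_BerInt} by taking adjoints, and the entire Yoneda computation descends cleanly to the abelian heart without needing to control $t$-exactness of $I_{*,w^{-1}}$ on non-regular principal series by hand.
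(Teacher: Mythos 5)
The statement you are proving is left open in the paper: it is stated as Conjecture \ref{cnj_geomlem}, and the author explicitly remarks after Theorem \ref{thm_main} that he is ``not able yet to identify canonically the subquotients.'' The paper only establishes the weaker, non-canonical identification of Theorem \ref{thm_main}(b), and does so in Appendix \ref{app_detail} by an Euler-characteristic computation resting on Claim \ref{clm_pseqgrth}, which is merely an equality in the Grothendieck group. So there is no proof in the paper to compare yours against, and your proposal should be judged as an attempt at an open problem.

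As such, your reduction is sensible and its first steps are sound: the functoriality of the canonical $W$-filtration (so that $\Psi_w$ is a well-defined exact functor), the standard/costandard $RHom$-orthogonality plus the $\bbB \dashv \bbC$ adjunction giving $\mathrm{Hom}(\bbB(i_w)_![F]^{\lambda}_w,\calP r^{\lambda}(E)) \cong \mathrm{Hom}(F,\Psi_w(E))$, and the $w=1$ case via $\bbB(i_1)_![\cdot]^{\lambda}_1 \cong \calP r^{\lambda}$ and full faithfulness of $j_*\circ[\cdot]^{\lambda}$, all check out and essentially parallel the appendix computation. But the argument has a genuine gap exactly where the real content of the conjecture sits: the compatibility $I_{*,w^{-1}}\calP r^{\lambda}(E) \cong \calP r^{w^{-1}\lambda}(\tilde{w}^{-1}\cdot E)$ is assumed, not proved, and you need it both as an isomorphism in the heart (so in particular $t$-exactness of $I_{*,w^{-1}}$ on these objects, which fails to be automatic for non-regular or non-generic $\lambda$) and naturally in $E$ with a controlled dependence on the lift $\tilde{w}$. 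The paper's only available input here, Claim \ref{clm_pseqgrth}, gives an identity of classes in $K$-theory, which cannot be upgraded to such a natural isomorphism without new geometric work on the correspondence $Z_{w^{-1}}$ over the open $K$-orbit; note that the analogous statement at the level of Harish-Chandra modules (intertwining operators between principal series with Weyl-translated parameters) is genuinely false at special parameters, so the degree-zero concentration and the identification of the restriction to $U$ both require proof, not just verification. Your fallback route is no better off, since it invokes Conjecture \ref{cnj_gbfa}, which is likewise open in the paper. In short: a reasonable strategy outline, but not a proof.
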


A slightly neater and more general formulation would be as follows. Consider the extended universal enveloping algebra $\widetilde{U} = \widetilde{U}(\frakg) := U(\frakg) \otimes_{\calZ(\frakg)} S(\frakh)$ (see \cite{BG} for a more detailed discussion of it). It is straightforward to define $$ Pr : \calM (\frakt , M) \to \calM ( \widetilde{U} , K),$$ $$ V_w : \calM(\frakt , M) \to \calM(\widetilde{U} , MN)$$ and $$ \calC:  \calM ( \widetilde{U} , K) \to  \calM ( \widetilde{U} , MN).$$

\begin{conjecture}\label{cnj_geomlem2}
	The functor $$ \calC \circ Pr : \calM(\frakt , M) \to \calM(\widetilde{U} , MN)$$ admits a canonical $W$-filtration, whose $w$-th subquotient is isomorphic to $V_w$.
\end{conjecture}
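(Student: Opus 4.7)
The plan is to lift the entire $D_{\lambda}$-module story to the \emph{monodromic} (extended) flag variety $\monflg = G/N$, which is a $T$-torsor $p: \monflg \to \flg$. On $\monflg$, weakly $T$-equivariant $D_{\monflg}$-modules carry a canonical $S(\frakh)$-action from differentiating the $T$-action, and their global sections naturally carry $\widetilde{U}(\frakg)$-actions. Thus weakly $T$-equivariant $D_{\monflg}$-modules form the universal geometric home for $\widetilde{U}$-modules, interpolating over $\wt$ all of the $D_{\lambda}$-module categories used in the main body. The whole apparatus of Chapters~\ref{sec_func}--\ref{chp_main} should lift without essential change to this monodromic setting.

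First I would extend the formalism of Chapter~\ref{sec_func} and the definitions of the averaging/relaxing functors to the monodromic setting, obtaining
$$ \bbB : \calM(D_{\monflg}, MN)^{T\text{-w}} \rightleftarrows \calM(D_{\monflg}, K)^{T\text{-w}} : \bbC. $$
Exactness of $\bbC$ and its commutation with $\Gamma$ follow as in Claim~\ref{clm_bbC} and Lemma~\ref{prop_CasGamma}: the same reduction via tensoring with $G$-equivariant line bundles on $\monflg$ brings one to the antidominant regular locus of $\wt$, where localization is an equivalence. The $MN$-orbit stratification of $\monflg$ (whose pieces $\widetilde{\calO}_w := p^{-1}(\calO_w)$ are $T$-bundles over the orbits downstairs) yields an affine simple $W$-stratification on the monodromic abelian category, so the formalism of (co)standard objects of Appendix~\ref{app_str} applies verbatim.

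Next I would prove the monodromic analog of Theorem~\ref{thm_main}: the object obtained by applying $\bbC$ to the monodromic principal series $\widetilde{j}_{*}[E]^{\monflg}$ is costandard-filtered, with $w$-th subquotient canonically identified with $(\widetilde{i}_w)_{*}[E]^{\monflg}_w$. The existence of the costandard filtration runs verbatim as in Chapters~\ref{chp_bbC}--\ref{chp_main}: the monodromic $\bbB^{der}$ sends standards to acyclics by the argument of Proposition~\ref{prop_BDerTilt} (intertwining functors $I_{*,w}$ reduce to $w=1$, where the statement becomes affineness of $\widetilde{j}$), so its right adjoint $\bbC$ sends the injective object $\widetilde{j}_{*}[E]^{\monflg}$ to a costandard-filtered object. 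Pushing down by $\Gamma$ and using that $\widetilde{\calO}_w$ is affine (so monodromic costandards on it have acyclic global sections), one recovers a canonical $W$-filtration on $\calC \circ Pr(E)$ whose $w$-th subquotient is $\Gamma\bigl((\widetilde{i}_w)_{*}[E]^{\monflg}_w\bigr) = V_w(E)$ by the very definition of $V_w$.

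The main obstacle is the canonicity of the identification of subquotients, i.e.\ the monodromic analog of part~(b) of Theorem~\ref{thm_main}. The principal advantage of working on $\monflg$ is that $\widetilde{\calO}_w$ and the functor $(\widetilde{i}_w)_{*}[\cdot]^{\monflg}_w$ are intrinsic: they require no choice of representative of $w \in N_G(T)/T$, which is precisely why a canonical statement becomes possible only after passing to the monodromic side. Concretely, one would restrict $\bbC(\widetilde{j}_{*}[E]^{\monflg})$ to a Zariski neighborhood of $\widetilde{\calO}_w$ and compute the subquotient directly via the $(\bbB, \bbC)$-adjunction together with the identity $\bbB \circ (\widetilde{i}_1)_{!} \circ [\cdot]^{\monflg}_1 \is \widetilde{j}_{*}[\cdot]^{\monflg}$, then transport along the intertwining functor $I_{*,w^{-1}}$ as in the proof of Proposition~\ref{prop_BDerTilt}. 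Executing this uniformly in $w$ while keeping track of the canonical isomorphisms (rather than mere isomorphism classes) is the delicate step; it is exactly the content that is sketched only in appendix~\ref{app_detail} for fixed $\lambda$, and which would need to be carried out carefully in the monodromic setting to conclude both Conjecture~\ref{cnj_geomlem} and its refinement Conjecture~\ref{cnj_geomlem2}.
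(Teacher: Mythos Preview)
The statement you are addressing is a \emph{conjecture} in the paper (Conjecture~\ref{cnj_geomlem2}), not a theorem; the paper offers no proof of it, so there is nothing to compare your proposal against. What the paper actually proves is the weaker, $\lambda$-specific statement of Theorem~\ref{thm_main} and Corollary~\ref{cor_main}, and even there part~(b) only establishes a non-canonical isomorphism of objects (see the remark following Theorem~\ref{thm_main} and the sketch in Appendix~\ref{app_detail}, which computes a dimension of a Hom-space via Euler characteristics and Claim~\ref{clm_pseqgrth}, giving no canonical map).

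Your monodromic-lift strategy is the natural one, and the paper itself gestures toward it by introducing $\widetilde{U}$ and citing \cite{BG}. The part of your plan that replicates Chapters~\ref{chp_CJB}--\ref{chp_main} on $\monflg$ is plausible and would likely yield the existence of a canonical $W$-filtration on $\calC \circ Pr$ with subquotients abstractly isomorphic to $V_w$, i.e.\ the $\widetilde{U}$-analog of Corollary~\ref{cor_main}. But you correctly identify, and do not resolve, the genuine obstruction: producing a \emph{canonical} (functorial) isomorphism of the $w$-th subquotient with $V_w$. The argument of Appendix~\ref{app_detail} is a multiplicity-one count, not the construction of a map, and transporting it along $I_{*,w^{-1}}$ does not by itself pin down a preferred isomorphism. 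Your claim that passing to $\monflg$ removes the ambiguity of lifting $w$ to $N_G(T)$ is suggestive but not substantiated: one still needs an actual natural transformation, and nothing in your outline constructs one. So your proposal is a reasonable program, but it leaves open precisely the step that makes this a conjecture rather than a corollary.
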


%%%%%%%%%%%%%%%%%%%%%%%%%%%%%%%%%%%%%
%%%%%%%%%%%%%%%%%%%%%%%%%%%%%%%%%%%%%
%%%%%%%%%%%%%%%%%%%%%%%%%%%%%%%%%%%%%
%%%%%%%%%%%%%%%%%%%%%%%%%%%%%%%%%%%%%
%Appendices
%%%%%%%%%%%%%%%%%%%%%%%%%%%%%%%%%%%%%
%%%%%%%%%%%%%%%%%%%%%%%%%%%%%%%%%%%%%
%%%%%%%%%%%%%%%%%%%%%%%%%%%%%%%%%%%%%
%%%%%%%%%%%%%%%%%%%%%%%%%%%%%%%%%%%%%

\appendix

%%%%%%%%%%%%%%%%%%%%%%%%%%%%%%%%%%%%%
%%%%%%%%%%%%%%%%%%%%%%%%%%%%%%%%%%%%%
%$D$-algebras and some functors
%%%%%%%%%%%%%%%%%%%%%%%%%%%%%%%%%%%%%
%%%%%%%%%%%%%%%%%%%%%%%%%%%%%%%%%%%%%

\pagebreak
\chapter{$D$-algebras and some functors}\label{app_dalg}

Our reference for $D$-algebras is \cite{BB}. Everything in this appendix is standard, except the functors $p_{\flat}$ and $p^{\flat}$.

\section{$D$-algebras}

Given a smooth variety $Y$, we have the notion of a $D$-algebra on $Y$. An important particular case of a $D$-algebra is that of a TDO. An important particular case of a TDO is the sheaf of differential operators $D_Y$.

Given a smooth morphism $p : Z \to Y$ and a $D$-algebra $\calA$ on $Y$, we have an induced $D$-algebra $p^{\cdot} \calA$ on $Z$. If $\calA$ is a TDO, then so is $p^{\cdot} \calA$. Also, $p^{\cdot} D_Y \is D_Z$.

If an affine algebraic group $K$ acts on $Y$, we have the notion of a $K$-equivariant $D$-algebra on $Y$. One possible way to define a $K$-equivariant $D$-algebra on $Y$ is as a $D$-algebra $\calA$, equipped with an isomorphism $prj^{\cdot} \calA \is act^{\cdot} \calA$ satisfying a suitable cocycle condition, where $prj, act : K \times Y \to Y$ are the projection and action maps.

Given a variety $Z$ on which $K$ acts, a smooth $K$-equivariant morphism $p: Z \to Y$, and a $K$-equivariant $D$-algebra $\calA$ on $Y$, the $D$-algebra $p^{\cdot} \calA$ has a natural $K$-equivariant structure.

Suppose that $p : Z \to Y$ is a principal $K$-fibration. By this we mean that the $K$-action on $Y$ is trivial and that etale-locally $p : Z \to Y$ looks like the projection $K \times Y \to Y$. Then, via $p^{\cdot}$, the category of $D$-algebras on $Y$ is equivalent to the category of $K$-equivariant $D$-algebras on $Z$, and we denote the inverse equivalence by $p_+$.

Finally, let us remind that if $\calL$ is a $K$-equivariant line bundle on $Y$, and $\calA$ a $K$-equivariant $D$-algebra on $Y$, then we have the $K$-equivariant $D$-algebra $\calA_{\calL}$ on $Y$ - the twist of $\calA$ by $\calL$ - which is defined as $End_{\calA} (\calL\otimes_{\calO} \calA)$ (where the $End$ is w.r.t. the $\calA$-action on the right).

\section{Modules over $D$-algebras}

Let $Y$ be a smooth variety and $\calA$ a $D$-algebra on $Y$. We have the notion of an $\calA$-module, and we denote by $Mod(\calA)$ the abelian category of $\calA$-modules.

If an affine algebraic group $K$ acts on $Y$, we have the notion of a (strongly) $K$-equivariant $\calA$-module, and we denote by $Mod(\calA, K)$ the abelian category of $K$-equivariant $\calA$-modules.

If $\calL$ is a $K$-equivariant line bundle on $Y$, and $\calA$ a $K$-equivariant $D$-algebra on $Y$, then we have an equivalence $$ \calL\otimes_{\calO} \cdot : Mod(\calA , K) \rightleftarrows Mod(\calA_{\calL} , K) : \calL^{-1} \otimes_{\calO} \cdot.$$

\section{The functors $p^{\circ} , p_+$}

Let $Y$ be a smooth variety and $\calA$ a $D$-algebra on $Y$. Let $p : Z \to Y$ be a smooth morphism. We have the functor $$p^{\circ} : Mod(\calA) \to Mod(p^{\cdot}\calA).$$ On the level of $\calO$-modules, this functor is just the usual pullback of quasi-coherent sheaves.

Suppose that an affine algebraic group $K$ acts on $Y$ and $Z$, and that $p$ is $K$-equivariant. Then the functor $p^{\circ}$ naturally lifts to a functor $$p^{\circ} : Mod( \calA , K) \to Mod(p^{\cdot} \calA , K).$$

Suppose that $p : Z \to Y$ is a principal $K$-fibration. Then the functor $$ p^{\circ} : Mod(\calA) \to Mod(p^{\cdot}\calA , K)$$ is an equivalence; Its inverse equivalence $$Mod(\calA) \leftarrow Mod(p^{\cdot}\calA , K)  : p_+$$ sends a sheaf $\calF$ to the sheaf of $K$-invariants of its pushforward to $Y$. If $p$ admits a section $s: Z \to Y$ (i.e. the principal $K$-fibration is trivialized), then $p_+$ is identified with $s^{\circ}$. If an additional affine algebraic group $M$ acts on $Z$ and $Y$, commuting with the action of $K$, then the equivalences above extend to equivalences $$ p^{\circ} : Mod(\calA , M) \rightleftarrows Mod(p^{\cdot} \calA , K \times M): p_+.$$

\section{The functors $p_*, p_{\flat}, p^{\flat}$}

Let $Y$ be a smooth variety and $\calA$ a $D$-algebra on $Y$. Let $U$ be a smooth affine variety. Denote by $p : U \times Y \to Y$ the projection. We have $p^{\cdot}\calA \is \calO_U \boxtimes \calA$. Denote by $D(U), O(U), T(U), \Omega^{\text{top}}(U)$ the global sections of the corresponding sheaves on $U$ (of differential operators, regular functions, vector fields, top differential forms).

Since $U$ is affine we can (and will) identify $p^{\cdot}\calA$-modules with $\calA$-modules with $D(U)$-action (in other words, $D(U)$-module objects in the category $Mod(\calA)$). Also, recall the equivalence $$(\cdot)^R : Mod(D_U \boxtimes \calA) \rightleftarrows Mod(D_U^{\text{op}} \boxtimes \calA) : \presuper{L}(\cdot)$$ which translates between left and right $D(U)$-modules; $(\cdot)^R$ is given by tensoring with $\Omega^{\text{top}}(U)$ over $O(U)$, while $\presuper{L}(\cdot)$ is given by tensoring with the inverse module.

\subsection{The non-equivariant case}

We consider four functors, given by the usual formalism of $Hom$ and $\otimes$ (and translation via $(\cdot)^R$ and $\presuper{L}{(\cdot)}$ when needed). The fourth one, $p^{\flat}$, we will not construct or use in this thesis, unless $Y = pt$.

\begin{enumerate}

\item The functor $$ Mod(\calA) \leftarrow Mod(p^{\cdot} \calA) : p_{\flat}$$ is defined by $$\calF \mapsto Hom_{D(U)} (O(U) , \calF).$$ In other words, the functor $p_{\flat}$ assigns to a $D(U)$-module the submodule of sections killed by $T(U)$.

\item The functor $$ p_{*} : Mod(p^{\cdot} \calA) \to Mod(\calA)$$ is defined by $$\calF \mapsto \calF^{R} \otimes_{D(U)} O(U).$$ In other words, the functor $p_{*}$ first converts a left $D(U)$-module to a right $D(U)$-module, and then takes its coinvariants under $T(U)$.

\item We have already encountered the functor $$p^{\circ} : Mod(\calA) \to Mod(p^{\cdot} \calA);$$ It is given by $$\calF \mapsto \calO (U) \otimes \calF.$$

\item In case $Y=pt$, the functor $$ Mod(p^{\cdot} \calA) \leftarrow Mod(\calA) : p^{\flat}$$ is defined by $$\calF \mapsto \presuper{L}{Hom (O(U) , \calF)}.$$

\end{enumerate}

We have the adjunctions $$p^{\circ} : Mod(\calA) \rightleftarrows Mod(p^{\cdot} \calA) : p_{\flat}$$ and $$p_* : Mod(p^{\cdot} \calA) \rightleftarrows Mod(\calA)  : p^{\flat}.$$

\subsection{The equivariant case}

Suppose that an affine algebraic group $K$ acts on $Y$ and on $U$ and consider the diagonal $K$-action on $U \times Y$. Then $p$ is $K$-equivariant. All four functors and adjunctions from the previous subsection carry over to this equivariant case, with one change: In the definition of the functor $p^{\flat}$, we should take $K$-finite vectors: $p^{\flat} (\calF) := \presuper{L}{\left(Hom (O(U) , \calF)^{K-\text{fin.}}\right)}$.

If the $K$-action on $U$ is homogeneous, we have a surjection $O(U) \otimes \frakk \to T(U)$, and thus the functor $p_{\flat}$ can be described as taking sections killed by $\frakk$, and the functor $p_*$ can be described as taking coinvariants under $\frakk$ (after the translation to a right $D(U)$-module).

%%%%%%%%%%%%%%%%%%%%%%%%%%%%%%%%%%%%%
%%%%%%%%%%%%%%%%%%%%%%%%%%%%%%%%%%%%%
%Filtrations and Stratifications
%%%%%%%%%%%%%%%%%%%%%%%%%%%%%%%%%%%%%
%%%%%%%%%%%%%%%%%%%%%%%%%%%%%%%%%%%%%

\pagebreak
\chapter{Filtrations and Stratifications}\label{app_str}

We fix a field $k$, and a finite partially ordered set $W$. We use the topology on $W$, where a subset $I \subset W$ is closed if $w \in I$ and $v \leq w$ imply $v \in I$. We write, for $w \in W$, $\overline{w} = \{ v \ | \ v \leq w \}$ (the closure in the topology). \quash{The set $Cl(W)$ of closed subsets of $W$ is itself a finite partially ordered set (w.r.t. containment).}

Throughout, $\calP$ and $\calQ$ will denote $k$-linear abelian categories, and all functors will be assumed $k$-linear.

\section{Filtrations on objects}

\begin{definition}
A \textbf{$W$-filtration} on an object $A \in \calP$ is the data of a subobject $A_I \subset A$ for every $I \in Cl(W)$, such that:

\begin{enumerate}[\quad	(1)]
	\item For $I \subset J$, we have $A_I \subset A_J$.
	\item $A_{\emptyset} = 0$ and $A_{W} = A$.
	\item For $I,J$, the morphism $A_I / A_{I \cap J} \oplus A_J / A_{I \cap J} \to A_{I \cup J} / A_{I \cap J}$ (the direct sum of the two natural embeddings) is an isomorphism.
\end{enumerate}

\end{definition}

\begin{remark}
	Condition (3) is equivalent to the condition: For $I,J$, we have $A_{I \cup J} = A_I + A_J$ and $A_{I \cap J} = A_I \cap A_J$. Thus, one can reformulate the definition of a $W$-filtration as a poset morphism $(Cl(W) , \subset ) \to (Sub(A) , \subset)$ which preserves finite supremums and infimums (here, $Sub(A)$ is the class of subobjects of $A$).
	
	Another equivalent formulation of condition (3) is that for $I,J$, the map $A_I / A_{I \cap J} \to A_{I \cup J} / A_{J}$ is an isomorphism.
\end{remark}

Suppose given a $W$-filtration on an object $A$. For a locally closed subset $K \subset W$, i.e a subset of the form $K = I - J$ where $J \subset I$ are closed, we define $A^K := A_{\overline{K}} / A_{\overline{K} - K}$. This also is canonically isomorphic to $A_I / A_J$ in the notation above, by condition (3) in the definition of a $W$-filtration. In particular, we define the \textbf{$w$-th subquotient} $A^w := A^{\{ w\} }$.

\section{Stratifications on categories}

%%%%%%%%%%%%%%%%%%%%%%
\begin{definition}
A \textbf{$W$-stratification on $\calP$} is the data of a Serre subcategory $\calP_I \subset \calP$ for every $I \in Cl(W)$, such that:
\begin{enumerate}[\quad	(1)]
\item For $I \subset J$, we have $\calP_I \subset \calP_J$, and the projection $\calP_J \to \calP_J / \calP_I$ admits a left and a right adjoint.
\item $\calP_{\emptyset} = 0$ and $\calP_{W} = \calP$.
\item For $I,J$, the functor $\calP_I / \calP_{I \cap J} \oplus \calP_J / \calP_{I \cap J} \to \calP_{I \cup J} / \calP_{I \cap J}$ (the direct sum of the two natural embeddings) is an equivalence.
\end{enumerate}
\end{definition}
%%%%%%%%%%%%%%%%%%%%%%

\begin{remark}
The above definition is taken from \cite[Section 2.6.4]{BB}.
\end{remark}

Suppose given a $W$-stratification on $\calP$. For a locally closed subset $K \subset W$, i.e a subset of the form $K = I - J$ where $J \subset I$ are closed, we define $\calP^K := \calP_{\overline{K}} / \calP_{\overline{K} - K}$. This also is canonically equivalent to $\calP_I / \calP_J$ in the notation above, by condition (3) in the definition of a $W$-stratification. In particular, we define $\calP^w := \calP^{\{ w\} }$. Denote by $$i_w^{\bullet} : \calP_{\overline{w}} \to \calP^w$$ the projection, by $(i_w)_!$ its left adjoint and by $(i_w)_*$ its right adjoint. It is a standard fact that the unit and counit morphisms, $id \to i_w^{\bullet} \circ (i_w)_!$ and $i_w^{\bullet} \circ (i_w)_* \to id$, are isomorphisms.

%%%%%%%%%%%%%%%%%%%%%%
\begin{definition}

$\calP$ is called \textbf{smallish} if:

\begin{enumerate}[\quad	(1)]
\item Every object in $\calP$ is of finite length.
\item There are finitely many isomorphism classes of irreducible objects in $\calP$.
\item Every irreducible object $E \in \calP$ satisfies $End(E) \is k$.
\end{enumerate}

\end{definition}
%%%%%%%%%%%%%%%%%%%%%%

%%%%%%%%%%%%%%%%%%%%%%
\begin{definition}

A $W$-stratification on $\calP$ is called \textbf{simple} if:
	
\begin{enumerate}[\quad	(1)]
\item For every $w \in W$, the category $\calP^w$ is smallish and semisimple.
\item For every $v,w \in W$ and $E \in \calP^v , F \in \calP^w$, we have $Ext^2_{\calP} ((i_v)_! E , (i_w)_* F) = 0$.
\end{enumerate}

In addition, the $W$-stratification is called \textbf{affine}, if the functors $(i_w)_!$ and $(i_w)_*$ are exact, for every $w \in W$.

\end{definition}
%%%%%%%%%%%%%%%%%%%%%%

%%%%%%%%%%%%%%%%%%%%%%
\begin{theorem}\label{thm_strat} Suppose given a simple $W$-stratification on $\calP$. Then:
\begin{enumerate}
\item $\calP$ is smallish.
\item The irreducible objects in $\calP$ are exactly the objects of the form $(i_w)_{!*} E := Im((i_w)_! E \to (i_w)_* E)$, where $w \in W$ and $E \in \calP^w$ is irreducible.
\item $\calP$ has enough projective objects, and every projective object in $\calP$ admits a filtration whose subquotients are in the image of $(i_w)_!$ for various $w \in W$.
\item $\calP$ has enough injective objects, and every injective object in $\calP$ admits a filtration whose subquotients are in the image of $(i_w)_*$ for various $w \in W$.
\item $\calP$ has finite cohomological dimension.
\item Every right exact functor $F : \calP \to (FinVect_k)^{op}$ is representable (here, $FinVect_k$ is the smallish category of finite-dimensional vector spaces over $k$). In particular, every right exact functor $F: \calP \to \calQ$ admits a right adjoint.
\end{enumerate}
\end{theorem}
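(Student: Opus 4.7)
The plan is induction on $|W|$. The cases $|W| \leq 1$ are immediate: $\calP$ is either the zero category or a smallish semisimple category in which every claim is trivial. For the inductive step, fix a maximal element $w_0 \in W$; then $W' := W - \{w_0\}$ is closed and we obtain an abelian recollement
\[
\calP_{W'} \underset{\iota_*}{\hookrightarrow} \calP \underset{(i_{w_0})_!,\,(i_{w_0})_*}{\overset{i_{w_0}^{\bullet}}{\longrightarrow}} \calP^{w_0},
\]
with $i_{w_0}^{\bullet}$ exact and $\iota_*$ a fully faithful Serre inclusion. A routine check (using the adjunctions of the recollement to identify $\operatorname{Ext}^{\leq 2}$ of objects of $\calP_{W'}$ computed in $\calP_{W'}$ with those computed in $\calP$) shows that $\calP_{W'}$ inherits a simple $W'$-stratification, so by induction all six claims hold for $\calP_{W'}$.

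Parts (1) and (2) then fall out quickly. For every $A \in \calP$ the recollement produces a short exact sequence relating $A$ to an object of $\calP_{W'}$ and to an object in the image of $(i_{w_0})_*$ from the semisimple $\calP^{w_0}$, yielding finite length, finitely many isomorphism classes of irreducibles, and $\operatorname{End}$-rings equal to $k$. The irreducibles split into those pushed forward from $\calP_{W'}$ (of the form $(i_w)_{!*}E$ by induction) and those in the essential image of $(i_{w_0})_{!*}$ applied to irreducibles of $\calP^{w_0}$.

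The heart of the proof, and the main obstacle, is (3); part (4) is formally dual. Because $i_{w_0}^{\bullet}$ is exact, its left adjoint $(i_{w_0})_!$ preserves projectives, so $(i_{w_0})_! E$ is projective in $\calP$ for every $E \in \calP^{w_0}$ (automatically projective, as $\calP^{w_0}$ is semisimple); this handles the irreducibles $(i_{w_0})_{!*}E$. For an irreducible $L \in \calP_{W'}$, let $P'$ be its projective cover in $\calP_{W'}$, $\Delta$-filtered by induction. The module $P'$ need not be projective in $\calP$, but the obstruction is controlled on the open stratum: $\operatorname{Ext}^1_\calP(P', \iota_* Q) = 0$ for $Q \in \calP_{W'}$ by projectivity in the Serre subcategory, so the only possible $\operatorname{Ext}^1$'s against test objects in $\calP$ are $\operatorname{Ext}^1_\calP(P', (i_{w_0})_* F)$ for irreducibles $F \in \calP^{w_0}$. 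Setting $E := \bigoplus_F \operatorname{Ext}^1_\calP(P', (i_{w_0})_* F)^* \otimes F$ and forming the universal extension
\[
0 \to P' \to P(L) \to (i_{w_0})_! E \to 0,
\]
I expect $P(L)$ to be projective in $\calP$, $\Delta$-filtered (adding the top stratum $w_0$), and still to surject onto $L$. The delicate verification, where the hypothesis $\operatorname{Ext}^2_\calP((i_v)_! E', (i_{w_0})_* F) = 0$ is used essentially, is that this universal extension kills \emph{every} $\operatorname{Ext}^1$ class against $(i_{w_0})_* F$, not merely the ones encoded by the summand built in; this runs via a diagram chase with the long exact $\operatorname{Ext}$-sequence induced by the displayed extension, in which the connecting map lands in $\operatorname{Ext}^2_\calP((i_{w_0})_! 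E, (i_{w_0})_* F) = 0$.

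Parts (5) and (6) are then formal consequences. For (5), a $\Delta$-filtration on $P(L)$ has length bounded by $|W|$ with each $\Delta$-subquotient $(i_w)_! E'$ of bounded projective dimension (it sits on a single stratum where the category is semisimple), giving a uniform bound on global dimension. For (6), assemble a finite projective generator $P := \bigoplus_L P(L)$ indexed by the finitely many irreducibles from (1), and set $R := \operatorname{End}_\calP(P)^{\operatorname{op}}$, a finite-dimensional $k$-algebra; the functor $\operatorname{Hom}_\calP(P, -)$ is then a Morita equivalence between $\calP$ and the category of finite-dimensional right $R$-modules. A right exact functor $F : \calP \to (\operatorname{FinVect}_k)^{\operatorname{op}}$ is equivalently a left exact $F^{\operatorname{op}} : \calP^{\operatorname{op}} \to \operatorname{FinVect}_k$, determined by the finite-dimensional $R$-module $F^{\operatorname{op}}(P)$; transporting across the Morita equivalence, $F^{\operatorname{op}} \simeq \operatorname{Hom}_\calP(-, X)$ for a unique $X \in \calP$, representing $F$.
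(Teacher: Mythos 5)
Your overall strategy (induction on $|W|$, recollement along a maximal stratum, universal extensions, then Morita theory for representability) is the standard one; the paper itself simply verifies (1),(2) by this kind of recursion and outsources (3)--(5) to \cite[Section 3.2]{BGS} and (6) to \cite[Proposition 2.4]{MV}. However, the step you yourself identify as the heart of the argument is wrong as written. Since $i_{w_0}^{\bullet}$ is exact and $\calP^{w_0}$ is semisimple, $(i_{w_0})_!E$ is projective in $\calP$ for every $E \in \calP^{w_0}$; hence $Ext^1_{\calP}((i_{w_0})_!E , P') = 0$, and your ``universal extension'' $0 \to P' \to P(L) \to (i_{w_0})_!E \to 0$ necessarily splits, so $P(L) \cong P' \oplus (i_{w_0})_!E$, which is projective in $\calP$ only if $P'$ already was. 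One sees the same failure in the long exact sequence you invoke: it gives $Ext^1_{\calP}(P(L),S) \cong \ker\bigl(Ext^1_{\calP}(P',S) \to Ext^2_{\calP}((i_{w_0})_!E,S)\bigr)$, so the hypothesis $Ext^2_{\calP}((i_{w_0})_!E,(i_{w_0})_*F)=0$ works \emph{against} you: it forces $Ext^1_{\calP}(P(L),(i_{w_0})_*F) \cong Ext^1_{\calP}(P',(i_{w_0})_*F)$, i.e.\ nothing gets killed. The universal extension must be taken in the opposite direction, $0 \to (i_{w_0})_!E \to P(L) \to P' \to 0$, with class in $Ext^1_{\calP}(P',(i_{w_0})_!E)$ (surjectivity onto $L$ is then automatic); in that case $Ext^1_{\calP}(P(L),S)$ is the cokernel of the connecting map $Hom((i_{w_0})_!E,S) \to Ext^1_{\calP}(P',S)$, and the $Ext^2$-vanishing hypothesis is what allows one to choose $E$ and the class so that this map is onto for all relevant $S$.

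Two further points need attention even after the direction is fixed. Projectivity requires killing $Ext^1$ against all irreducibles, and the irreducibles on the open stratum are $(i_{w_0})_{!*}F$, not the costandards $(i_{w_0})_*F$; vanishing against costandard-filtered objects only characterizes standard-filtered objects (lemma \ref{lem_charstflt}), so you must additionally compare $Ext^1_{\calP}(P',(i_{w_0})_{!*}F)$ with $Ext^1_{\calP}(P',(i_{w_0})_*F)$ using the fact that the cokernel of $(i_{w_0})_{!*}F \hookrightarrow (i_{w_0})_*F$ lies in $\calP_{W'}$ --- this is exactly where the work in \cite[Section 3.2]{BGS} lies. Similarly, your parenthetical justification for (5) --- that $(i_w)_!E'$ ``sits on a single stratum where the category is semisimple'' --- does not bound its projective dimension in $\calP$; finiteness of the cohomological dimension follows instead by downward induction on $w$ using the corrected projective covers. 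Your reduction of (6) to a Morita equivalence with a finite-dimensional algebra is a reasonable alternative to the paper's citation of \cite{MV}, but it depends on (1)--(4) and so inherits the gap above.
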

%%%%%%%%%%%%%%%%%%%%%%

\begin{proof}
	For parts 3,4,5, we refer to \cite[Section 3.2]{BGS}. In order to fit in their setup, we need to verify parts 1,2. This verification is quite a standard nice exercise in playing with the various adjunctions - one works recursively, each time focusing on a subcategory $\calP_{I}$ and its quotient $\calP^{w}$, where $I \in Cl(W)$ and $w \in I$ is maximal. For part 6, see for example \cite[Proposition 2.4]{MV}.
\end{proof}

\subsection{Standard objects in stratified categories}

Suppose that we are given an affine simple $W$-stratification on $\calP$.

\begin{definition}
An object $A \in \calP$ is called:
\begin{itemize}
\item \textbf{standard}, if $A$ is isomorphic to $(i_w)_! E$ for some $w \in W$ and irreducible $E \in \calP^w$.
\item \textbf{costandard}, if $A$ is isomorphic to $(i_w)_* E$ for some $w \in W$ and irreducible $E \in \calP^w$.
\item \textbf{standard-filtered}, if $A$ admits a filtration by standard objects.
\item \textbf{costandard-filtered}, if $A$ admits a filtration by costandard objects.
\end{itemize}
\end{definition}

\begin{remark}
In theorem \ref{thm_strat} it is said that projective objects are standard-filtered, and injective objects are costandard-filtered.
\end{remark}

Let us notice that all the statements and definitions below have dual ones, which we do not state explicitly.

\begin{definition}\label{def_BigEnough}
	A class of objects $\calO \subset \calP$ is called (left-)\textbf{big enough}, if the following holds:
	\begin{enumerate}[(1)]
		\item $\calO$ is closed under finite direct sums.
		\item If in a short exact sequence $$0 \to A \to B \to C \to 0$$ we have $B,C \in \calO$, then $A \in \calO$.
		\item Every object in $\calP$ can be presented as a quotient of an object in $\calO$.
	\end{enumerate}

\end{definition}

\begin{remark}
	Classes of objects which are big enough are important for constructing derived functors. Namely, given a right exact functor $F: \calP \to \calQ$, a class of objects $\calO \subset \calP$ is said to be \textbf{adapted} for $F$, if it is big enough, and $F$ transforms a short exact sequence of objects in $\calO$ into a short exact sequence. In that case, the left derived functor $LF : D^b (\calP) \to D^b (\calQ)$ can be constructed by the rule of applying $F$ term-wise to complexes of objects in $\calO$. In particular, $LF$ sends objects from $\calO$ to acyclic objects.
\end{remark}

\begin{lemma}\label{lem_BigEnough}
	The class of standard-filtered objects in $\calP$ is big enough, and is adapted to the functors $Hom(\cdot , C)$, where $C$ is costandard-filtered.
\end{lemma}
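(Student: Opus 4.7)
The plan is to check the three clauses of Definition \ref{def_BigEnough} in turn, and then the adaptation condition for $\text{Hom}(\cdot, C)$. Clause (1), closure under finite direct sums, is immediate from the definition of standard-filtered. Clause (3), that every object is a quotient of a standard-filtered one, is a direct consequence of Theorem \ref{thm_strat}(3): $\calP$ has enough projectives, and every projective is standard-filtered, so any object of $\calP$ admits a surjection from a standard-filtered one.

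The remaining content --- clause (2) and the adaptation statement --- both rest on the Ext-orthogonality
\[
\text{Ext}^i_\calP(A, C) = 0 \quad \text{for all } i \geq 1,
\]
whenever $A$ is standard-filtered and $C$ is costandard-filtered. This is (up to the dual reformulation) the characterization used in Lemma \ref{lem_charstflt}. I would prove it by a double d\'evissage on the filtration lengths of $A$ and $C$, reducing to the case of a single standard $(i_v)_! E$ paired with a single costandard $(i_w)_* F$, where $E, F$ are irreducible. The case $i = 2$ is precisely axiom (2) in the definition of a simple $W$-stratification. The case $i = 1$ is a small computation exploiting the adjunctions $(i_w)_! \dashv i_w^\bullet \dashv (i_w)_*$ combined with the semisimplicity of the strata $\calP^w$. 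For $i \geq 3$, I would bootstrap via dimension shifting: cover $(i_v)_! E$ by a projective (itself standard-filtered, by Theorem \ref{thm_strat}(3)), so that the kernel is again standard-filtered by induction on the closure $\overline{v}$, and use finite cohomological dimension of $\calP$ (Theorem \ref{thm_strat}(5)) to ensure the induction terminates. This is essentially the standard highest-weight-category argument of \cite[Section 3.2]{BGS}.

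With the Ext-orthogonality in hand, clause (2) falls out as follows. Given $0 \to A \to B \to D \to 0$ with $B, D$ standard-filtered, the long exact sequence of $\text{Ext}^*_\calP(-, Y)$ with $Y$ costandard sandwiches $\text{Ext}^i(A, Y)$ between $\text{Ext}^i(B, Y) = 0$ and $\text{Ext}^{i+1}(D, Y) = 0$ for every $i \geq 1$, so $\text{Ext}^{\geq 1}(A, Y) = 0$; applying the dual of Lemma \ref{lem_charstflt} then gives that $A$ itself is standard-filtered. The adaptation of the class of standard-filtered objects to $\text{Hom}(\cdot, C)$ is the same vanishing at the level $i = 1$: for a short exact sequence of standard-filtered objects, $\text{Ext}^1(D, C) = 0$ forces the resulting sequence of Hom-spaces into $C$ to remain short exact.

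The main obstacle is precisely the Ext-orthogonality in degrees $i \geq 3$: though the $i = 2$ case is handed to us by axiom (2) of simplicity, propagating to higher $i$ requires a clean inductive setup combining projective resolutions with a well-ordered reduction on $W$, and it is only the finiteness of $W$ together with the finite cohomological dimension of $\calP$ that guarantees termination.
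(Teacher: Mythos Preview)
The paper states Lemma~\ref{lem_BigEnough} without proof (as it does for Lemma~\ref{lem_charstflt}); both are treated as standard facts from the highest-weight-category package, with \cite[Section~3.2]{BGS} cited for the surrounding Theorem~\ref{thm_strat}. Your sketch is exactly the standard argument and is correct in outline.

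One organizational point deserves care. Your proof of clause~(2) invokes Lemma~\ref{lem_charstflt}, and your dimension-shift for $i\geq 3$ asserts that the kernel of a projective cover of a standard object is again standard-filtered ``by induction on $\overline{v}$''. Taken at face value these two steps lean on each other: the characterization in Lemma~\ref{lem_charstflt} is what certifies the kernel as standard-filtered, while Lemma~\ref{lem_charstflt} in the direction $(2)\Rightarrow(1)$ is most naturally proved using the full Ext-vanishing you are trying to establish. The clean way to dissolve this is to run \emph{everything} --- the Ext-orthogonality in all degrees, the characterization of standard-filtered objects, and the three clauses of ``big enough'' --- as a single simultaneous induction on $|W|$ (equivalently, on the closed subsets $I$), which is how \cite{BGS} organizes it. In that induction the key observation is that for $v$ maximal in $I$, the object $(i_v)_! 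E$ is already projective in $\calP_I$ (since $i_v^\bullet$ is exact and $\calP^v$ is semisimple), so the passage to smaller strata is controlled. Your final paragraph essentially says this; just make the simultaneous nature of the induction explicit so the reader does not suspect circularity.
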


\begin{lemma}\label{lem_charstflt}
For $A \in \calP$, the following are equivalent:
\begin{enumerate}
\item $A$ is standard-filtered.
\item For any costandard-filtered object $C \in \calP$, the $Hom$-space $RHom(A ,C)$ is acyclic.
\end{enumerate}

\end{lemma}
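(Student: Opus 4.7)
For $(1) \Rightarrow (2)$: this follows essentially tautologically from Lemma \ref{lem_BigEnough}. Since the class of standard-filtered objects is adapted to the contravariant functor $\mathrm{Hom}(-, C)$ and $A$ itself lies in this class, the derived functor $R\mathrm{Hom}(-, C)$ may be computed on the trivial length-zero resolution of $A$, yielding $R\mathrm{Hom}(A, C) = \mathrm{Hom}(A, C)$ concentrated in degree zero, hence acyclic. Under the hood this amounts to the Ext-orthogonality
\[
\mathrm{Ext}^k_\calP((i_v)_! E, (i_w)_* F) = 0 \qquad (k \geq 1)
\]
for irreducible $E \in \calP^v$, $F \in \calP^w$, which in turn follows from adjunction, exactness of $i_v^\bullet$, semisimplicity of the strata, and the $\mathrm{Ext}^2$-vanishing hypothesis built into the definition of a simple stratification (combined with a dimension shift for $k \geq 2$).

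For $(2) \Rightarrow (1)$: I would argue by induction on the support $I := \mathrm{supp}(A) \in Cl(W)$, the smallest closed subset with $A \in \calP_I$. The base case $I = \emptyset$ gives $A = 0$. For the inductive step, pick $w$ maximal in $I$, set $E := i_w^\bullet A \in \calP^w$ (nonzero and semisimple since $\calP^w$ is), and consider the counit $\psi : (i_w)_! E \to A$. The main claim is that $\psi$ is injective with cokernel $A' \in \calP_{I \setminus \{w\}}$ that still satisfies property (2). Granted this, $A'$ has strictly smaller support, so by the induction hypothesis $A'$ is standard-filtered; combined with the fact that $(i_w)_! E$ is a direct sum of standards (as $E$ is semisimple), the short exact sequence $0 \to (i_w)_! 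E \to A \to A' \to 0$ exhibits a standard filtration of $A$.

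Among the three assertions in the main claim, the support statement is immediate from the triangle identity $i_w^\bullet \psi = \mathrm{id}_E$, and the persistence of (2) from $A$ to $A'$ follows by applying the long exact sequence of $\mathrm{Hom}(-, C)$ to the above short exact sequence together with the already-established implication $(1) \Rightarrow (2)$ applied to $(i_w)_! E$. The hard part --- and where I expect the main obstacle --- is the injectivity of $\psi$. The kernel $K := \ker \psi$ satisfies $i_w^\bullet K = 0$, so $K$ is supported strictly below $w$ and sits inside the ``radical layer'' of $(i_w)_! E$. To force $K = 0$, I would apply $\mathrm{Hom}(-, (i_v)_* F)$ for $v < w$ and irreducible $F \in \calP^v$, exploit that $\mathrm{Hom}((i_w)_! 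E, (i_v)_* F) = 0$ (by adjunction, since $i_w^\bullet (i_v)_* F = 0$ when $v \not\geq w$) together with hypothesis (2) on $A$ to show that every morphism from $K$ into any such costandard extends to one on $A$, and then use the faithfulness of ``testing against all costandards supported in $\overline{w} \setminus \{w\}$'' to deduce $K = 0$. This is the classical injectivity-of-counit step in highest weight category theory, and it consumes the bulk of the work.
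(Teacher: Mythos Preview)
The paper does not prove this lemma; it is stated without proof as a standard fact about stratified (highest-weight) categories, in the spirit of the material cited from \cite{BGS}. Your $(1)\Rightarrow(2)$ is correct and is exactly the content of Lemma~\ref{lem_BigEnough}.

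For $(2)\Rightarrow(1)$ your overall architecture is the standard one, but the order in which you run the argument does not close. Your plan is: first prove the counit $\psi:(i_w)_!E\to A$ is injective, then show the cokernel $A'$ still satisfies~(2), then induct. The injectivity step, as you sketch it, fails. You say a nonzero map $K\to(i_v)_*F$ would ``extend to one on $A$''; but the composite $K\hookrightarrow(i_w)_!E\to A$ is zero by definition of $K$, so this is not what you mean. If instead you try to extend it over $(i_w)_!E$, the obstruction lives in $\mathrm{Ext}^1(\mathrm{im}\,\psi,(i_v)_*F)$, and the long exact sequence for $0\to\mathrm{im}\,\psi\to A\to A'\to 0$ only identifies this with $\mathrm{Ext}^2(A',(i_v)_*F)$ --- which you cannot yet control.

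The fix is to reverse the order and induct on $|W|$, choosing $w$ maximal in $W$. From the triangle $(i_w)_!E\to A\to\mathrm{cone}(\psi)$ together with $R\mathrm{Hom}((i_w)_!E,(i_v)_*F)=0$ for $v\neq w$ one gets $R\mathrm{Hom}(\mathrm{cone}(\psi),(i_v)_*F)\cong R\mathrm{Hom}(A,(i_v)_*F)$; reading off the long exact sequence coming from the filtration $K[1]\to\mathrm{cone}(\psi)\to A'$ gives $\mathrm{Ext}^1(A',(i_v)_*F)=0$ for every $v$, \emph{without} assuming $K=0$. Since $A'\in\calP_{W\setminus\{w\}}$, the inductive hypothesis now says $A'$ is standard-filtered; the already-proved direction then forces $\mathrm{Ext}^2(A',-)=0$ on costandards, whence $\mathrm{Hom}(K,(i_v)_*F)\cong\mathrm{Ext}^2(A',(i_v)_*F)=0$ and $K=0$ follows. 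So the two halves of your ``main claim'' must be established in the opposite order.
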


\begin{lemma}
A standard-filtered object $A \in \calP$ has a canonical $W$-filtration, characterized as the unique $W$-filtration on $A$ for which the $w$-th subquotient is in the image of $(i_w)_!$, for every $w \in W$.
\end{lemma}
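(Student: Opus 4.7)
Plan: I will construct the canonical $W$-filtration by an intrinsic maximality characterization and then verify the axioms and uniqueness, following the standard pattern for quasi-hereditary / highest weight categories.

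For each closed $I \subset W$, I would define $A_I \subset A$ as the maximal subobject of $A$ that is itself standard-filtered with standard subquotients of the form $(i_w)_! E$ only for $w \in I$. The existence of such a maximal subobject follows from two facts: the class of such subobjects is closed under finite sums (since $B_1 + B_2$ is a quotient of $B_1 \oplus B_2$, whose standard-filtered status with $I$-subquotients is preserved on passing to quotients by Lemma \ref{lem_charstflt}), and $A$ has finite length because $\calP$ is smallish. The first substantial step is to prove that $A/A_I$ is itself standard-filtered with subquotients only for $w \in W \setminus I$. I would establish this by induction on the length of an arbitrarily chosen standard filtration of $A$.

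The key technical input, used throughout the inductive step, is the vanishing
\[
\mathrm{Ext}^i_{\calP}((i_v)_! F, (i_u)_! E) = 0 \quad \text{for all } i \geq 0 \text{ whenever } v \not\leq u.
\]
This comes from the adjunction $R\mathrm{Hom}_{\calP}((i_v)_! F, -) \cong R\mathrm{Hom}_{\calP^v}(F, i_v^\bullet R\Gamma_{\overline{v}}(-))$: the cohomology of $R\Gamma_{\overline{v}}((i_u)_! E)$ lies in $\calP_{\overline{v} \cap \overline{u}} \subset \calP_{\partial \overline{v}}$, and the latter is killed by $i_v^\bullet$. This vanishing allows one to freely rearrange or compare standard filtrations across incomparable strata, and underlies both the well-definedness of $A_I$ and the identification of $A/A_I$ as standard-filtered with subquotients outside $I$.

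Granted the above, the $W$-filtration axioms fall out routinely. Monotonicity and $A_\emptyset = 0$, $A_W = A$ are immediate. For intersections, $A_I \cap A_J$ is standard-filtered with subquotients in $I \cap J$ and maximal such, giving $A_I \cap A_J = A_{I \cap J}$. For unions, $A_I + A_J$ is standard-filtered with subquotients in $I \cup J$ (proved via the surjection from $A_I \oplus A_J$ and the $\mathrm{Ext}$ vanishing above), so $A_I + A_J \subset A_{I \cup J}$; the reverse inclusion uses that $A_{I \cup J}/(A_I + A_J)$ embeds into $A/(A_I + A_J)$, which is standard-filtered with subquotients outside $I \cup J$, forcing this quotient to vanish. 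The subquotient property $A^w = A_{\overline{w}}/A_{\partial \overline{w}} \in \mathrm{Im}(i_w)_!$ is immediate, since $A_{\overline{w}}$ is standard-filtered with subquotients for $v \leq w$ and $A_{\partial \overline{w}}$ absorbs all subquotients with $v < w$, leaving a single top piece $(i_w)_! E_w$. Uniqueness of the $W$-filtration with the prescribed subquotient property follows by running the maximality characterization in reverse: for any other such $(A'_I)$, both $A'_I$ and $A/A'_I$ are standard-filtered with subquotients in $I$ and $W \setminus I$ respectively (by the assumed subquotient structure), so $A'_I = A_I$.

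The main obstacle will be the intermediate claim that $A/A_I$ is standard-filtered with subquotients only for $w \notin I$; this is where the inductive manipulation of standard filtrations is delicate and relies crucially on the Ext-vanishing for incomparable strata. All other axioms reduce to this claim together with the intrinsic maximality of $A_I$.
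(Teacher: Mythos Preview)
The paper does not actually supply a proof of this lemma, so there is nothing to compare your argument against directly.  More importantly, there is a genuine problem both with the lemma as literally stated and with your proof of it.

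Your central claim --- that $A/A_I$ is standard-filtered with subquotients indexed only by $w\notin I$ --- is false.  Take the two-element poset $W=\{1<s\}$ (e.g.\ the principal block of category $\calO$ for $\mathfrak{sl}_2$, or $\calM(D_\lambda,MN)$ for $SL_2$).  Let $A=P_1$, the projective cover of $L_1$; it is standard-filtered via $0\to\Delta_s\to P_1\to\Delta_1\to 0$.  For $I=\{1\}$ your definition gives $A_I=\mathrm{soc}(P_1)=L_1=\Delta_1$, but $P_1/L_1\cong\nabla_s$, which is \emph{not} isomorphic to $\Delta_s$ and hence not standard-filtered.  So no subobject $A_{\{1\}}\subset P_1$ has the property that both $A_{\{1\}}\in\mathrm{Im}(i_1)_!$ and $P_1/A_{\{1\}}\in\mathrm{Im}(i_s)_!$; the lemma in its literal form already fails on this example.

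The underlying issue is the direction of the Ext-vanishing.  Your vanishing $\mathrm{Ext}^i((i_v)_!F,(i_u)_!E)=0$ for $v\not\le u$ is correct, but it lets you push standards with \emph{larger} index to the bottom of a filtration, not smaller.  Concretely, if $I$ is downward-closed, $v\in I$, $u\notin I$, then one only knows $u\not\le v$, whereas your swap needs $v\not\le u$; when $v<u$ (which is allowed) the extension $0\to\Delta_u\to M\to\Delta_v\to 0$ need not split, as the $P_1$ example shows.  Thus the canonical filtration of a standard-filtered object is naturally a $W^{\mathrm{op}}$-filtration (indexed by upward-closed sets), not a $W$-filtration.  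The \emph{dual} statement --- costandard-filtered objects carry a canonical $W$-filtration with $w$-th subquotient in $\mathrm{Im}(i_w)_*$ --- is the correct one, and it is this dual that the paper actually uses (see the Remark immediately following the lemma and Corollary~\ref{cor_main}).  Your argument, run for costandard objects with the dual Ext-vanishing $\mathrm{Ext}^i((i_v)_*F,(i_u)_*E)=0$ for $u\not\le v$, would go through.

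A smaller point: your appeal to Lemma~\ref{lem_charstflt} to show that a quotient of a standard-filtered object is standard-filtered is not valid --- standard-filteredness is not closed under quotients (e.g.\ $L_s=\Delta_s/L_1$ is not standard-filtered).
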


\begin{remark}\label{rem_funcfilt}
	In corollary \ref{cor_main} in the main text, the following remark will be useful. Let $F: \calP \to \calQ$ be a functor, with the property that it transforms a short exact sequence whose first term is costandard-filtered, into a short exact sequence. Then for a costandard-filtered $A \in \calP$, denoting by $I \mapsto A_I$ the canonical $W$-filtration mentioned in the previous lemma, we have that $I \mapsto F(A_I)$ is a $W$-filtration on $F(A)$, whose $w$-th subquotient is canonically isomorphic to $F$ applied to the $w$-th subquotient of $A$.
\end{remark}

\subsection{The derived setting}

\begin{claim} Let $\calD$ be a bounded $t$-category equipped with a $t$-exact functor $\iota: D^b (\calP) \to \calD$ inducing an equivalence on the hearts (which we will just identify notationally). Suppose that we are given a $W$-stratification on $\calP$.

\begin{enumerate}
\item For any two $A,B \in \calP$, the map $$Ext_{\calP}^2 (A,B) = Hom_{D^b(\calP)} (A,B[2]) \to Hom_{\calD} (A,B[2])$$ is injective, and hence condition (2) in the definition of a simple stratification is implied by the similar condition $$Hom_{\calD} ((i_v)_! E , (i_w)_* F [2]) = 0.$$
\item If the $W$-stratification on $\calP$ is simple and $Hom_{\calD}(S,C[j]) = 0$ for every standard $S \in \calP$, costandard $C \in \calP$ and $j > 0$, then $\iota$ is an equivalence.
\end{enumerate}

\end{claim}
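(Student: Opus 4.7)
For part (1), the plan is to exploit the fact that for any $t$-category with heart $\calP$, the canonical map $Ext^1_\calP(X,Y) \to Hom_\calD(X, Y[1])$ is already an isomorphism (this is a general feature of $t$-structures). Given $\alpha \in Ext^2_\calP(A,B)$ with $\iota(\alpha) = 0$, I would represent $\alpha$ by a Yoneda $2$-extension $0 \to B \to C_1 \to C_2 \to A \to 0$, set $X := \ker(C_2 \to A)$, and factor $\alpha = \delta[1] \circ \gamma$, where $\gamma \colon A \to X[1]$ and $\delta \colon X \to B[1]$ are the connecting morphisms of the two short exact sequences $0 \to X \to C_2 \to A \to 0$ and $0 \to B \to C_1 \to X \to 0$. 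The vanishing of $\iota(\alpha)$ applied to the rotated distinguished triangle $C_2 \to A \xrightarrow{\gamma} X[1] \to C_2[1]$ yields a lift of $\iota(\delta[1])$ to some $\beta \in Hom_\calD(C_2[1], B[2]) \is Hom_\calD(C_2, B[1])$; via the $Ext^1$-isomorphism, $\beta = \iota(\tilde\beta)$ for a unique $\tilde\beta \in Hom_{D^b(\calP)}(C_2[1], B[2])$. Applying the $Ext^1$-isomorphism once more shows $\tilde\beta \circ (X[1] \to C_2[1]) = \delta[1]$ in $D^b(\calP)$, and post-composing with $\gamma$ gives $\alpha = 0$ because consecutive arrows in a distinguished triangle compose to zero.

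For part (2), I would first establish that $\iota$ is fully faithful, and then deduce essential surjectivity. Fully faithfulness is shown in three steps. Step A: dévissage along the canonical $W$-filtrations extends the hypothesis from (standard, costandard) pairs to (standard-filtered, costandard-filtered) pairs, using long exact sequences in $Hom_\calD(-, -)$ and induction on the length of the filtrations. Since projectives are standard-filtered and injectives costandard-filtered by Theorem \ref{thm_strat}, this yields $Hom_\calD(P, I[n]) = 0$ for every $n > 0$, projective $P$, injective $I$. Step B: for arbitrary $B \in \calP$, choose a bounded injective resolution $B \to I^\bullet$ (available because $\calP$ has finite cohomological dimension by Theorem \ref{thm_strat}), and use the short exact sequences $0 \to Z^k \to I^k \to Z^{k+1} \to 0$ together with the vanishing from Step A to show inductively that $Hom_\calD(P, B[n]) = 0$ for all $n > 0$; the last step of the induction invokes the surjectivity of $Hom_\calP(P, I^{n-1}) \twoheadrightarrow Hom_\calP(P, Z^n)$, which holds because $P$ is projective in $\calP$. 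Step C: for arbitrary $A, B \in \calP$, choose a bounded projective resolution $P^\bullet \to A$; one has $\iota(P^\bullet) \is A$ in $\calD$ since $\iota$ is $t$-exact and a morphism in $\calD$ which is an isomorphism on every cohomology object is an isomorphism. Dévissage along the stupid truncations of $P^\bullet$ using Step B then identifies $Hom_\calD(A, B[n])$ with $H^n(Hom_\calP(P^\bullet, B)) = Ext^n_\calP(A, B)$, giving fully faithfulness.

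For essential surjectivity, induct on the cohomological amplitude of $X \in \calD$, which is finite by boundedness of $\calD$. The amplitude-zero case is immediate since $\iota$ identifies the hearts. Otherwise, let $b$ be the top cohomological degree of $X$ and use the truncation triangle $\tau^{\leq b-1} X \to X \to H^b(X)[-b] \to (\tau^{\leq b-1} X)[1]$; by the inductive hypothesis $\tau^{\leq b-1} X$ lies in the essential image of $\iota$, and $H^b(X)[-b]$ trivially does. The gluing class in $Hom_\calD(H^b(X), \tau^{\leq b-1} X [b+1])$ lifts to $D^b(\calP)$ by fully faithfulness, and the cone of this lift in $D^b(\calP)$ maps under $\iota$ to an object isomorphic to $X$. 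The step I expect to require the most care is Step B of fully faithfulness, specifically squeezing the necessary surjectivity out of the projectivity of $P$ at the final stage of the induction on cohomological length; the remaining work is standard manipulation within the $t$-structure on $\calD$ and Theorem \ref{thm_strat}.
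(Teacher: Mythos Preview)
Your argument is correct. The paper does not give its own proof but simply cites \cite[Lemma 3.2.4]{BGS} for part (1) and \cite[Corollary 3.3.2]{BGS} for part (2); what you have written is essentially a faithful unpacking of those references. The Yoneda-splitting argument you give for part (1) is exactly the standard one (and does not in fact use the stratification, only the general fact that $\iota$ induces isomorphisms on $Ext^0$ and $Ext^1$), and your three-step d\'evissage for part (2) through standard-filtered projectives and costandard-filtered injectives is the usual route to this kind of ``realization functor'' equivalence. The only cosmetic difference is that BGS phrase the conclusion of Step C via a direct inductive comparison of $Ext$-groups rather than stupid truncations, but the content is identical.
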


\begin{proof}
For part (1) see \cite[Lemma 3.2.4]{BGS}, and for part (2) see \cite[Corollary 3.3.2]{BGS}.
\end{proof}

%%%%%%%%%%%%%%%%%%%%%%%%%%%%%%%%%%%%%
%%%%%%%%%%%%%%%%%%%%%%%%%%%%%%%%%%%%%
%Some details of chapter \ref{chp_main}
%%%%%%%%%%%%%%%%%%%%%%%%%%%%%%%%%%%%%
%%%%%%%%%%%%%%%%%%%%%%%%%%%%%%%%%%%%%

\pagebreak
\chapter{Sketch of proof of part $(b)$ of theorem \ref{thm_main}}\label{app_detail}

In this appendix we sketch the proof of part $(b)$ of theorem \ref{thm_main}.

For the following preparatory claim we just sketch the reasoning, without providing detail.

\begin{claim}\label{clm_pseqgrth}
	Let $F \in \calM^{\lambda} (\frakt , M)$. The objects $$ \bbB ((i_w)_! [F]^{\lambda}_w) \in \calM(D_{\lambda} , K),$$ for different $w \in W$, are all equal in the Grothendieck group.
\end{claim}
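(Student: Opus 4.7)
The plan is to reduce to antidominant regular $\lambda$, transport the question across the equivalence $\Gamma$, and exploit the fact that twisted Verma modules of different flavours share a common class in the Grothendieck group.

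First, by Lemma \ref{lem_BerTrans} (together with the obvious commutation of $(i_w)_!$ with line bundle twists), tensoring by a $G$-equivariant $\calL_\mu$ is an exact equivalence commuting with all the relevant functors, so choosing $\mu$ with $\lambda + \mu$ antidominant regular I may assume $\lambda$ itself is antidominant and regular. Under this assumption $\Gamma$ is an equivalence on every equivariant category in play, and Lemma \ref{lem_Bloc} intertwines $\bbB$ with $\calB$; the problem therefore reduces to showing that the classes $[\calB(V^\lambda_{w,!}(F))]$ in $K_0(\calM^{[\lambda]}(\frakg, K))$ are independent of $w$, where I write $V^\lambda_{w,!}(F) := \Gamma((i_w)_![F]^\lambda_w)$.

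Next, I would show that already the class $[V^\lambda_{w,!}(F)]$ is independent of $w$ in $K_0(\calM^{[\lambda]}(\frakg, MN))$. The natural morphism $(i_w)_! \to (i_w)_*$ has kernel and cokernel supported on the boundary $\overline{\calO_w} \setminus \calO_w$, and these boundary contributions cancel in $K_0$ --- the $D$-module avatar of the classical fact that Verma and dual Verma modules share a common composition series in category $\calO$ --- giving $[V^\lambda_{w,!}(F)] = [V^\lambda_w(F)]$. By the general theory of twisted Verma modules recalled in Section \ref{sec_tvm} (see also \cite{AL}), the various $V^\lambda_w(F)$ for different $w$ all share a common composition series, hence a common class in $K_0$.

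Finally, I would transport Proposition \ref{prop_BDerTilt} across $\Gamma$ to deduce that the left derived functor $L\calB$ is acyclic on $V^\lambda_{w,!}(F)$, so $[\calB(V^\lambda_{w,!}(F))] = [L\calB(V^\lambda_{w,!}(F))]$ in $K_0$. Since $L\calB$ induces a well-defined homomorphism on Grothendieck groups, this converts the $w$-independence of the preceding paragraph into the desired $w$-independence of $[\bbB((i_w)_![F]^\lambda_w)]$. The main obstacle I expect is cleanly verifying the first part of the middle paragraph --- that $(i_w)_!$ and $(i_w)_*$ applied to $[F]^\lambda_w$ define the same class in $K_0(\calM(D_\lambda, MN))$ --- and tracking carefully the normalization conventions hidden in the equivalences $[\cdot]^\lambda_w$ so that this identification matches the twisted Verma statement on the $(\frakg, MN)$-side.
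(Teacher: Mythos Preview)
Your reduction to antidominant regular $\lambda$ and the passage to global sections are fine and match the paper. The gap is in the middle paragraph.

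The assertion that $[V^\lambda_{w,!}(F)]$, equivalently $[(i_w)_![F]^\lambda_w]$, is independent of $w$ in $K_0$ is false. As $w$ ranges over $W$ and $F$ over irreducibles in $\calM^\lambda(\frakt,M)$, the classes $[(i_w)_![F]^\lambda_w]$ form a \emph{basis} of $K_0(\calM(D_\lambda,MN))$; distinct $w$'s give linearly independent classes. Your first step, $[(i_w)_![F]^\lambda_w]=[(i_w)_*[F]^\lambda_w]$, is correct (standard and costandard with the same parameters agree in $K_0$), but the appeal to \cite{AL} does not do what you want: the twisted Vermas that share a composition series there are those attached to a \emph{fixed} highest weight, whereas the paper's $V^\lambda_w(F)$, for varying $w$ and fixed $(\lambda,F)$, are $(i_w)_*$-extensions from the \emph{different} orbits $\calO_w$ and represent different basis elements. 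Already for $SL_2$ with $\lambda$ integral regular, $(i_1)_*[F]^\lambda_1$ is supported on a single point while $(i_s)_*[F]^\lambda_s$ has full support; their classes cannot coincide.

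So the identity you are after cannot be proved on the $MN$-side and then transported through $L\calB$; the cancellation happens only after averaging to the $K$-side. The paper's argument achieves this with intertwining functors: using that $I_{*,w^{-1}}$ commutes with $\bbB^{der}$ (Lemma~\ref{lem_BerInt}) and preserves derived global sections, one identifies $\Gamma\bigl(\bbB((i_w)_![F]^\lambda_w)\bigr)\cong Pr^{w^{-1}\lambda}(F)$, and then invokes the classical fact that principal series $(\frakg,K)$-modules with parameters in the same $W$-orbit have the same composition factors. That last fact carries the real content and has no counterpart on the $(\frakg,MN)$-side.
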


\begin{proof}
	We can first, by tensoring with a suitable $G$-equivariant line bundle, reduce to the case when $\lambda$ is regular antidominant.
	
	Next, the global sections of $$ \bbB ((i_w)_! [F]^{\lambda}_w)$$ are the same as those of  $$ I_{*,w^{-1}} \bbB ((i_w)_! [F]^{\lambda}_w) \is \bbB ((i_1)_! [F]^{w^{-1} \lambda}_1) = \calP r^{w^{-1} \lambda} (F).$$ Thus, in order to verify the claim, we should verify that the Harish-Chandra modules $$ Pr^{w\lambda}(F) \in \calM^{[\lambda]}(\frakg , K),$$ for different $w \in W$, are all equal in the Grothendieck group. This is a well-known property of the principal series representations, that principal series representations with the same central and infinitesimal characters are equal in the Grothendieck group.
\end{proof}

\begin{proof}[Proof (of part $(b)$ of theorem \ref{thm_main})]

To see what is the $w$-th subquotient in the canonical $W$-filtration of $\bbC(\calP r^{\lambda} (E))$, we can assume without loss of generality that $E$ is irreducible. We want to see that, for an irreducible $F \in \calM^{\lambda}(\frakt , M)$, the space $$Hom ((i_w)_! [F]^{\lambda}_w , \bbC(\calP r^{\lambda} (E)))$$ is one-dimensional if $F$ is isomorphic to $E$, and is zero otherwise. By adjunction, this is the same as $$Hom (\bbB((i_w)_! [F]^{\lambda}_w) , \calP r^{\lambda} (E)).$$

First, let us notice that we can replace the dimension of $$Hom (\bbB((i_w)_! [F]^{\lambda}_w) , \calP r^{\lambda} (E))$$ with the Euler characteristic of $$RHom_{\calD(D_{\lambda},K)}(\bbB((i_w)_! [F]^{\lambda}_w) , \calP r^{\lambda} (E)).$$ Indeed, the later is acyclic because, recalling that $\calP r^{\lambda}(E) = j_* [E]^{\lambda}$, we have $$RHom_{\calD(D_{\lambda},K)}(\bbB((i_w)_! [F]^{\lambda}_w) , \calP r^{\lambda} (E)) \is RHom_{\calD(j^{\cdot} D_{\lambda},K)}(j^* \bbB((i_w)_! [F]^{\lambda}_w) , [E]^{\lambda}),$$ and the later is clearly acyclic.

Second, we can replace the Euler characteristic of $$RHom_{\calD(D_{\lambda},K)}(\bbB((i_w)_! [F]^{\lambda}_w) , \calP r^{\lambda} (E))$$ with the Euler characteristic of $$RHom_{\calD(D_{\lambda},K)}(\bbB((i_1)_! [F]^{\lambda}_1) , \calP r^{\lambda} (E)).$$ Indeed, this follow from claim \ref{clm_pseqgrth}.

Now, the later is just $$RHom_{\calD(D_{\lambda},K)}( j_* [F]^{\lambda} , j_* [E]^{\lambda})$$ which, by adjunction, can be written as $$RHom_{\calD(j^{\cdot}D_{\lambda},K)}( [F]^{\lambda} ,  [E]^{\lambda}).$$ This clearly has Euler characteristic $1$ if $F$ is isomorphic to $E$, and $0$ otherwise.

\end{proof}

%%%%%%%%%%%%%%%%%%%%%%%%%%%%%%%%%%%%%
%%%%%%%%%%%%%%%%%%%%%%%%%%%%%%%%%%%%%
%%%%%%%%%%%%%%%%%%%%%%%%%%%%%%%%%%%%%
%%%%%%%%%%%%%%%%%%%%%%%%%%%%%%%%%%%%%
%%%%%%%%%%%%%%%%%%%%%%%%%%%%%%%%%%%%%
%%%%%%%%%%%%%%%%%%%%%%%%%%%%%%%%%%%%%
%Bibliography
%%%%%%%%%%%%%%%%%%%%%%%%%%%%%%%%%%%%%
%%%%%%%%%%%%%%%%%%%%%%%%%%%%%%%%%%%%%
%%%%%%%%%%%%%%%%%%%%%%%%%%%%%%%%%%%%%
%%%%%%%%%%%%%%%%%%%%%%%%%%%%%%%%%%%%%
%%%%%%%%%%%%%%%%%%%%%%%%%%%%%%%%%%%%%
%%%%%%%%%%%%%%%%%%%%%%%%%%%%%%%%%%%%%

\newpage


\begin{thebibliography}{99}

\bibitem[AL]{AL} H. H. Andersen, N. Lauritzen; {\it Twisted Verma modules}, Studies in memory of Issai Schur (Chevaleret/Rehovot, 2000), 1-26, Progr. Math., 210, Birkhauser Boston, Boston, MA, 2003.

\bibitem[B]{B} J. Bernstein; {\it Notes of lectures on Representations of p-adic Groups},
Fall 1992 (unpublished, available online).

\bibitem[BB]{BB} A. Beilinson, J. Bernstein; {\it A proof of Jantzen conjectures},
I. M. Gelfand Seminar, 1-50, Adv. Soviet Math., 16, Part 1, Amer. Math. Soc., Providence, RI, 1993.

\bibitem[BBM]{BBM} A. Beilinson, R. Bezrukavnikov, I. Mirkovic; {\it Tilting exercises}, Moscow Math. J. 4 (2004), 547-557.

\bibitem[BG]{BG} A. Beilinson, V. Ginzburg; {\it Wall-crossing functors and $D$-modules}, Representation Theory, 3 (1999), 1-31.

\bibitem[BGS]{BGS} A. Beilinson, V. Ginzburg, W. Soergel; {\it Koszul duality patterns in representation theory}, J. Amer. Math. Soc. 9 (1996), no. 2, 473-527.

\quash{
\bibitem[BL]{BL} J. Bernstein, V. Lunts; {\it Localization for derived categories of {$(\frakg,K)$}-modules}, J. Amer. Math. Soc. 8 (1995), no. 4, 819-856.
}

\bibitem[BL]{BL} J. Bernstein, V. Lunts; {\it Equivariant Sheaves and Functors}, Springer
Lecture Notes in Math. 1578 (1994), Springer.

\bibitem[C1]{C1} W. Casselman; {\it Jacquet modules for real reductive groups}, Proceedings of the International Congress of Mathematicians (Helsinki, 1978), pp. 557-563, Acad. Sci. Fennica, Helsinki, 1980.

\bibitem[C2]{C2} W. Casselman; {\it Jacquet modules and the asymptotic behaviour of matrix coefficients}, can be found at \url{https://www.math.ubc.ca/~cass/research/pdf/amc.pdf}.

\bibitem[ENV]{ENV} M. Emerton, D. Nadler, K. Vilonen; {\it A geometric Jacquet functor}, Duke Math. J. 125 (2004), no. 2, 267-278.

\bibitem[HP]{HP} J.S. Huang, P. Pand{\v{z}}i{\'c}; {\it Dirac operators in representation theory}, Mathematics: Theory \& Applications. Birkhauser Boston, Inc., Boston, MA, 2006.

\bibitem[M]{M} D. Milicic; {\it Localization and Representation Theory of Reductive Lie Groups}, can be found at \url{http://www.math.utah.edu/~milicic/Eprints/book.pdf}.

\bibitem[MV]{MV} R. Mirollo, K. Vilonen; {\it Bernstein-Gelfand-Gelfand reciprocity on perverse sheaves}, Annales scientifiques de l'\'Ecole Normale Sup\'erieure (1987), Volume: 20, Issue: 3, page 311-323.

\bibitem[W]{W} N. Wallach; {\it Real reductive groups I}, Academic Press 1988.

\end{thebibliography}
\end{document}